\def\Ex{{\mathbb E}}
\def\E{{\mathbb E}}
\def\Pr{{\mathbb P}}
\def\er{{\mathbb R}}
\def\R{{\mathbb R}}
\def\te{{\mathbb T}}
\def\zet{{\mathbb Z}}
\def\ind{\mathbbm{1}}
\def\Om{\Omega}
\def\ve{\varepsilon}
\def\de{\; \mathrm{d}}
\newtheorem{thm}{Theorem}
\newtheorem{lem}[thm]{Lemma}
\newtheorem{prop}[thm]{Proposition}
\newtheorem{cor}[thm]{Corollary}
\newcommand*\samethanks[1][\value{footnote}]{\footnotemark[#1]}
\title{Two-sided bounds for $L_p$-norms of combinations of products of independent random variables}
\author{Ewa Damek\thanks{Research supported by the NCN grant DEC-2012/05/B/ST1/00692  and by 
Warsaw Center of Mathematics and Computer Science .}, 
Rafa{\l} Lata{\l}a\thanks{Research supported by the NCN grant DEC-2012/05/B/ST1/00412.},
Piotr Nayar\samethanks\ \  and Tomasz Tkocz}
\date{}
\begin{document}

\maketitle

\begin{abstract}
We show that for every positive $p$, the $L_p$-norm of linear combinations (with scalar or vector coefficients) 
of products of i.i.d.\ random variables, whose
moduli have a nondegenerate distribution with the $p$-norm one, is comparable to the $l_p$-norm of
the coefficients and the constants are explicit. As a result the same holds for linear combinations of Riesz products.
          
We also establish the upper and lower bounds of the $L_p$-moments of partial sums of perpetuities.
\end{abstract}

\noindent
\emph{Key words and phrases:} estimation of moments, product of independent random variables, Riesz product, 
stochastic difference equation, perpetuity.

\noindent
\emph{AMS 2010 Subject classification:} Primary 60E15, Secondary 60H25.

\section{Introduction and Main Results}

Let $X,X_1,X_2,\ldots$ be i.i.d. nondegenerate nonnegative r.v.'s with finite mean.  Define 
\begin{equation}
\label{defR}
R_0:=1 \quad \mbox{ and }\quad 
R_i:=\prod_{j=1}^{i}X_j \ \mbox{ for }i=1,2,\ldots.
\end{equation}
Then obviously for any vectors $v_0,v_1,\ldots,v_n$ in a normed space $(F,\|\ \|)$, 
$\Ex\|\sum_{i=0}^nv_iR_i\|\leq \sum_{i=0}^n\|v_i\|\Ex R_i$. In \cite{La} it was shown
that the opposite inequality holds, i.e.
\[
\Ex\left\|\sum_{i=0}^nv_iR_i\right\|\geq c_X\sum_{i=0}^n\|v_i\|\Ex R_i,
\]
where $c_X$ is a constant, which depends only on the distribution of $X$.

In this paper we present similar estimates for $L_p$-norms. Our main result is the following.

\begin{thm}
\label{thm_main}
Let $p>0$ and $X,X_1,X_2,\ldots$ be i.i.d. r.v.'s such that $|X|$ is nondegenerate,
$\Ex |X|^p<\infty$ and let $R_i$ be defined by \eqref{defR}. Then there exist constants 
$0<c_{p,X}\leq C_{p,X}<\infty$ which depend only on $p$ and the distribution of $X$ such that
for any  vectors $v_0,v_1,\ldots,v_n$ in a normed space $(F,\|\ \|)$,
\[
c_{p,X}\sum_{i=0}^n\|v_i\|^p\Ex |R_i|^p\leq \Ex\left\|\sum_{i=0}^nv_iR_i\right\|^p
\leq C_{p,X}\sum_{i=0}^n\|v_i\|^p\Ex |R_i|^p.
\]
\end{thm}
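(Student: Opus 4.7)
My plan is to exploit the multiplicative self-similarity of the series, aiming at a one-step inequality that iterates uniformly in~$n$. Rescaling so that $\E|X|^p=1$ (hence $\E|R_i|^p=1$), the theorem reduces to
$$ c_{p,X}\sum_{i=0}^n\|v_i\|^p \le \E\Bigl\|\sum_{i=0}^n v_i R_i\Bigr\|^p \le C_{p,X}\sum_{i=0}^n\|v_i\|^p. $$
The upper bound is immediate for $0<p\le 1$ from subadditivity $\|a+b\|^p\le\|a\|^p+\|b\|^p$ (with $C_{p,X}=1$); for $p>1$ it requires a recursive argument parallel to (and essentially dual to) the one I sketch for the lower bound.

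The structural key is the identity $\sum_{i=0}^n v_i R_i = v_0 + X_1\tilde S_{n-1}$, where $\tilde S_{n-1}:=\sum_{i=1}^n v_i X_2\cdots X_i$ is independent of $X_1$ and equidistributed with the shifted series $\sum_{j=0}^{n-1} v_{j+1} R_j$. I would aim for a one-step comparison
$$ \E\|a + XY\|^p \ge c_{p,X}\|a\|^p + \E\|XY\|^p, \qquad a\in F,\ Y\text{ independent of }X. $$
Given this, iteration over $n+1$ levels combined with $\E|X|^p=1$ peels off the coefficients $v_0,v_1,\ldots,v_n$ one at a time and produces the desired lower bound with a constant independent of $n$. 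The coefficient $1$ in front of $\E\|XY\|^p$ is essential: any weaker symmetric form $\E\|a+XY\|^p\ge c(\|a\|^p+\E\|XY\|^p)$ with $c<1$ would degrade to $c^n$ under iteration and be worthless.

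The main obstacle is therefore the one-step inequality itself. A preliminary symmetric version is tractable: conditioning on $Y$ reduces to deterministic $y$, after which one splits on the ratio $\|a\|/\|y\|$. The extremes are immediate from the reverse triangle inequality, while the middle range uses the nondegeneracy of $|X|$, which supplies two well-separated scales each carrying positive mass. Upgrading the coefficient of $\E\|XY\|^p$ to $1$ is the delicate step. For $p\ge 1$ I would symmetrize $X$ by passing to $X-X'$ with an independent copy; then $Z:=(X-X')Y$ is symmetric, and the function $a\mapsto\E\|a+Z\|^p$ is even and convex, hence minimized at $a=0$, yielding $\E\|a+Z\|^p\ge\E\|Z\|^p$ automatically. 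The passage back to $X$ produces a discrepancy that must be absorbed into the $c_{p,X}\|a\|^p$ term using the nondegeneracy-based bound above. For $0<p<1$ convexity is unavailable, so a direct analysis of the scalar moment $\E_X\|a+Xy\|^p$ is needed, leveraging a quantitative nondegeneracy estimate on $|X|$ and then lifting to the vector case by Fubini. The upper bound for $p>1$ is obtained by a dual inductive argument along the same recursion with the inequalities reversed.
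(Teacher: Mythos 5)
The central one-step inequality you propose, $\E\|a+XY\|^p \ge c_{p,X}\|a\|^p + \E\|XY\|^p$ with coefficient exactly $1$ in front of $\E\|XY\|^p$, is false, and this sinks the iteration. Take $F=\R$, $p=2$, $Y=y$ deterministic, and normalize $\E X^2=1$. Then $\E(a+Xy)^2 = a^2 + 2ay\,\E X + y^2$, so the claim reduces to $(1-c)a^2 + 2ay\,\E X \ge 0$ for all $a,y$. Whenever $\E X\neq 0$ (which holds for any nondegenerate $X\ge 0$, the case relevant to the stochastic-recursion application), choosing $a$ and $y$ with opposite signs and $|y|$ large makes the cross term $2ay\,\E X$ arbitrarily negative while $(1-c)a^2$ stays bounded. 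So no $c>0$ works, even for deterministic $Y$ — and since the iteration must apply with $Y=\tilde S_{n-1}$, which includes deterministic tails (set $v_2=\dots=v_n=0$), the failure is fatal, not a technicality. The symmetrization fallback does not repair this: convexity gives $\E\|a+(X-X')Y\|^p\ge \E\|(X-X')Y\|^p$, but the comparison back to the original term has the form $\E\|a+XY\|^p\ge 2^{-p}\E\|(X-X')Y\|^p$ — a multiplicative loss strictly less than $1$, which is exactly the kind of degradation you already flagged as worthless under iteration. Moreover $\E|X-X'|^p$ can be far smaller than $\E|X|^p$ (take $X=1+\ve Z$ with $\ve$ tiny), so the symmetrized term does not control $\E\|XY\|^p$ at all.

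The paper avoids a one-step peeling altogether. Its lower-bound argument (Propositions~\ref{prop:1} and~\ref{prop:2}) is an induction on $n$ at fixed block length $k$, where the induction hypothesis carries coefficients $\frac{\ve_1}{k}-c_i$ that are allowed to decay along $i$, with $c_i$ a bounded geometric sum of magnitude $O(\lambda^k)$. The two-case split on whether $\ve_0\|v_0\|^p$ exceeds the weighted tail $\Phi\sum_{i\ge k}\lambda^i\|v_i\|^p$ is what lets the argument charge the unavoidable cross-term loss to $\|v_0\|^p$ in one case and to the tail in the other. The technical engine replacing your one-step inequality is the cross-term estimate of Lemma~\ref{lem:4} (resp.\ Lemma~\ref{lem:4s} for $p\le 1$): $\E\|Y+Z\|^p\ge \E\|Y\|^p + (3^{-p}-2p\gamma)\E\|Z\|^p$ once $\E\|Y\|^{p-1}\|Z\|\le\gamma\E\|Z\|^p$, and the point of working over a block of $k$ variables is precisely to drive $\gamma$ small via the tail bound of Lemma~\ref{lem:2}. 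In short: the sharp additive split you want is unattainable, and the workaround is to accept a small loss per $k$-block and arrange, via the geometric decay in $k$, that the total loss over the whole sum stays bounded by a fixed fraction of $\frac{\ve_1}{k}$. Your recognition that a constant $c<1$ in front of the recursive term is useless is exactly the right diagnosis; what is missing is the realization that the cure is a block-length parameter and a summable error, not a sharper one-step inequality.
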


\noindent
{\bf Remark.} The assumption that $|X|$ has a nondegenerate distribution is crucial. If $\Pr(X_i=\pm 1)=1/2$ 
then $(R_i)$ are i.i.d. symmetric $\pm 1$ r.v's and  by the Khintchine inequality 
$\Ex|\sum_{i=1}^nR_i|^p$ is of the order $n^{p/2}$, whereas $\sum_{i=1}^n\Ex|R_i|^p=n$.

\medskip

In fact we prove a more general result that does not require the identical distribution assumption.
Namely, suppose that 
\begin{equation}
\label{ass1}
X_1,X_2,\ldots \mbox{ are independent r.v.'s such that }\Ex |X_i|^p<\infty. 
\end{equation}

Further assumptions depend on whether $p\leq 1$. For $p\in (0,1]$ we assume that 
\begin{equation}
\label{ass_small1}
\exists_{\lambda<1}\ \forall_{i}\ \Ex |X_i|^{p/2} \leq \lambda(\Ex |X_i|^p)^{1/2}
\end{equation} 
and 
\begin{equation}
\label{ass_small2}
\exists_{\delta>0, A>1}\ \forall_{i}\ \Ex(|X_i|^p-\Ex |X_i|^p)\ind_{\{ \Ex |X_i|^p \leq |X_i|^p \leq A\Ex |X_i|^p \}} 
\geq \delta\Ex |X_i|^p.
\end{equation} 

\begin{thm}
\label{thm_noniid_smallp}
Let $0<p\leq1$ and $X_1,X_2,\ldots$ satisfy assumptions \eqref{ass1}, \eqref{ass_small1} and \eqref{ass_small2}.
Then for any vectors $v_0,v_1,\ldots,v_n$ in a normed space $(F,\|\ \|)$ we have
\[
c(p,\lambda,\delta,A)\sum_{i=0}^n\|v_i\|^p\Ex |R_i|^p
\leq \Ex\left\|\sum_{i=0}^nv_iR_i\right\|^p
\leq \sum_{i=0}^n\|v_i\|^p\Ex |R_i|^p,
\]
where $c(p,\lambda,\delta,A)$ is a constant which depends only on $p,\lambda,\delta$ and $A$.
\end{thm}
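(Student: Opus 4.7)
The upper bound is elementary. Combining the triangle inequality with the subadditivity of $t\mapsto t^p$ on $[0,\infty)$ for $p\in(0,1]$, namely $(a+b)^p\le a^p+b^p$, one obtains pointwise
\[
\Bigl\|\sum_{i=0}^n v_iR_i\Bigr\|^p \le \Bigl(\sum_{i=0}^n\|v_i\|\,|R_i|\Bigr)^p \le \sum_{i=0}^n\|v_i\|^p|R_i|^p,
\]
and taking expectations gives the right-hand inequality of the theorem. Only the integrability hypothesis $\Ex|X_i|^p<\infty$ is needed here; neither \eqref{ass_small1} nor \eqref{ass_small2} enters.

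For the lower bound I would proceed by induction on $n$, the case $n=0$ being trivial. Writing
\[
\sum_{i=0}^n v_iR_i = v_0 + X_1 T,\qquad T=v_1+X_2v_2+\cdots+X_2\cdots X_n v_n,
\]
one notes that $T$ is independent of $X_1$ and is again a random sum of the same multiplicative form, with one fewer term. The heart of the argument is a one-step Khintchine-type inequality: there exists $\eta=\eta(p,\lambda,\delta,A)>0$ such that for every random vector $Y$ independent of $X_1$ and every $w\in F$,
\[
\Ex\|w+X_1Y\|^p \ge \eta\bigl(\|w\|^p+\Ex|X_1|^p\,\Ex\|Y\|^p\bigr).
\]
To prove this, apply the reverse triangle inequality $\|w+X_1Y\|\ge\big|\|w\|-|X_1|\|Y\|\big|$, set $\alpha:=\|w\|$ and $Z:=|X_1|\|Y\|$ (a product of two independent nonnegative random variables), and reduce to the scalar estimate $\Ex|\alpha-Z|^p\ge\eta(\alpha^p+\Ex Z^p)$. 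A further reduction using the subadditive pointwise bound $|\alpha-Z|^p\ge|\alpha^p-Z^p|$ converts the problem into a lower bound on the mean absolute deviation of $Z^p$ from $\alpha^p$. Markov's inequality handles the regimes $\alpha^p\ll\Ex Z^p$ and $\alpha^p\gg\Ex Z^p$; in the delicate comparable regime $\alpha^p\asymp\Ex Z^p$ one uses the MAD lower bound on $|X_1|^p$ provided by \eqref{ass_small2} (through $\Ex(|X_1|^p-\Ex|X_1|^p)_+\ge\delta\Ex|X_1|^p$), together with the non-concentration condition \eqref{ass_small1}.

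The main obstacle is closing the induction with a constant that is uniform in $n$. A naive iteration of the one-step lemma as stated would yield a bound of order $\eta^n$ on the last term; yet explicit examples, for instance $|X_i|\in\{0,1\}$ equiprobably with $v_i=(-1)^iw$, show that the correct constant stays bounded away from $0$ as $n\to\infty$. Overcoming this degeneracy requires a more refined induction: either one must sharpen the one-step lemma (replacing the factor $\eta$ on the second term by $1$, exploiting the additional structure that $T$ inherits from the inductive construction rather than treating $Y$ as an arbitrary random vector), or carry a composite inductive hypothesis that couples the desired lower bound with an auxiliary estimate (for example on $\Ex\|T\|^{p/2}$, controlled through \eqref{ass_small1}) so that the per-step losses compound harmlessly. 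The joint role of \eqref{ass_small1} and \eqref{ass_small2} is essential: the remark after Theorem~\ref{thm_main} confirms that no such inequality is possible when $|X|$ is degenerate, which is precisely when \eqref{ass_small1} breaks down.
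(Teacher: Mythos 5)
Your upper bound is exactly the paper's argument and is complete. Your one-step estimate is essentially the paper's Lemma \ref{lem:1s} (which in fact needs only \eqref{ass_small2}, giving $\Ex\|uX+v\|^p\ge\delta\max\{\|u\|^p,\|v\|^p\}$; condition \eqref{ass_small1} is not used there). But the lower bound as a whole has a genuine gap: you correctly identify that naive iteration of the one-step lemma loses a factor $\eta^n$, and then you stop at the point where the real work begins, offering only two unexecuted suggestions (``sharpen the one-step lemma'' or ``carry a composite inductive hypothesis''). Neither is developed far enough to constitute a proof, and the second, as vaguely stated, does not obviously converge to a uniform constant.

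For comparison, the paper closes the induction as follows. One fixes a large integer $k$ (depending on $\lambda,\delta,A$) and proves by induction on $n$ the refined statement of Proposition \ref{prop:2}: $\Ex\|\sum v_iR_i\|^p\ge\ve_0\|v_0\|^p+\sum_{i\ge1}(\tfrac{\ve_1}{k}-c_i)\|v_i\|^p$, where the correctors $c_i$ form a controlled geometric tail summing to at most $\ve_1/(2k)$. The base case $n\le k$ uses the crude $1/n\ge1/k$ bound from Lemma \ref{lem:3s}. The inductive step splits into two cases according to whether $\ve_0\|v_0\|^p$ is dominated by $\Phi\sum_{i\ge k}\lambda^i\|v_i\|^p$. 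In the nontrivial case one conditions on the event $A_k=\{|X_1|^p\le A,\ |R_{2,k}|^p\le4\lambda^{2k-2}\}$ and decomposes the sum into a head $Z=\sum_{i<k}v_iR_i$ and a tail $Y=\sum_{i\ge k}v_iR_i$. The decisive ingredient you are missing is Lemma \ref{lem:4s}: if $\Ex\|Z\|^p\ind_{\{\|Y\|^p\ge\frac18\Ex\|Z\|^p\}}\le\frac18\Ex\|Z\|^p$, then $\Ex\|Y+Z\|^p\ge\Ex\|Y\|^p+\frac12\Ex\|Z\|^p$ --- near-additivity with \emph{no} multiplicative loss on the $Y$ part, which is precisely what prevents the $\eta^n$ decay. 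The hypothesis of that lemma is verified using the tail bound of Lemma \ref{lem:2s} (a Paley--Zygmund-type estimate built from the $p/2$-moment assumption \eqref{ass_small1}) together with the smallness of $|R_{2,k}|$ on $A_k$ and the independence of the tail from the head. Without this mechanism --- or an equivalent one --- the proof is incomplete at its hardest point.
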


For $p>1$ to obtain the lower bound we assume that
\begin{align}
\notag
\exists_{\mu>0,A<\infty}\ \forall_i\
&\Ex||X_i|-\Ex |X_i|| \geq \mu (\Ex |X_i|^p)^{1/p} 
\\
\label{ass_large1}
&\mbox{ and }\ \Ex||X_i|-\Ex |X_i||\ind_{\{|X_i| > A(\Ex |X_i|^p)^{1/p} \}} 
\leq \frac{1}{4}\mu(\Ex |X_i|^p)^{1/p}
\end{align}
and
\begin{equation}
\label{ass_large2}
\exists_{q>\max\{p-1,1\}}\ \exists_{\lambda<1}\ \forall_{i}\ 
(\Ex |X_i|^{q})^{1/q} \leq \lambda(\Ex |X_i|^p)^{1/p}.
\end{equation}
For the upper bound we need the condition
\begin{equation}
\label{ass_large3}
\forall_{k=1,2,\ldots,\lceil p\rceil -1}\ \exists_{\lambda_k<1}\ \forall_{i}\ 
(\Ex |X_i|^{p-k})^{1/(p-k)} \leq \lambda_k (\Ex |X_i|^{p-k+1})^{1/(p-k+1)}.
\end{equation}

\begin{thm}
\label{thm_noniid_largep}
Let $p>1$ and $X_1,X_2,\ldots$ satisfy assumptions \eqref{ass1}, \eqref{ass_large1}, \eqref{ass_large2} 
and \eqref{ass_large3}.
Then for any vectors $v_0,v_1,\ldots,v_n$ in a normed space $(F,\|\ \|)$ we have
\[
c(p,\mu,A,q,\lambda)\sum_{i=0}^n\|v_i\|^p\Ex |R_i|^p
\leq \Ex\left\|\sum_{i=0}^nv_iR_i\right\|^p
\leq C(p,\lambda_1,\ldots,\lambda_{\lceil p\rceil-1})\sum_{i=0}^n\|v_i\|^p\Ex |R_i|^p,
\]
where $c(p,\mu,A,q,\lambda)$ is a positive constant which depends only on $p,\mu,A,q$ and $\lambda$ 
and $C(p,\lambda_1,\ldots,\lambda_{\lceil p\rceil-1})$ is a constant which depends only
on $p,\lambda_1,\ldots,\lambda_{\lceil p\rceil-1}$.
\end{thm}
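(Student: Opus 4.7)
I would first handle the case $p=m$ a positive integer. Applying $\|\sum_i v_iR_i\|^m \le (\sum_i\|v_i\||R_i|)^m$ and expanding multinomially, each cross-moment $\Ex|R_{j_1}\cdots R_{j_m}|$ (for $j_1\le\cdots\le j_m$) factorises by independence into $\prod_l\Ex|X_l|^{d_l}$, with $d_l = |\{k:j_k\ge l\}|\in\{0,\ldots,m\}$. Iterating \eqref{ass_large3} gives $\Ex|X_l|^\ell \le (\prod_{k=1}^{m-\ell}\lambda_k)^\ell(\Ex|X_l|^m)^{\ell/m}$ for $1\le\ell<m$, so the cross-moment is dominated by $C(\mathbf{j})\prod_{k=1}^m(\Ex|R_{j_k}|^m)^{1/m}$ where $C(\mathbf{j})$ is a product of factors decaying geometrically in the gaps $j_{k+1}-j_k$. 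Summing over ordered tuples then reduces to estimating a Toeplitz-type kernel with geometric decay; iterated H\"older/Young shows it maps $\ell^m$ to itself with norm $C_{m,\lambda_1,\ldots,\lambda_{m-1}}$, yielding the bound $\sum_j\|v_j\|^m\Ex|R_j|^m$. For non-integer $p$ with $m=\lceil p\rceil$, I would combine the integer bound at $m$ with H\"older interpolation (using the fractional-exponent comparisons in \eqref{ass_large3} to bridge $\Ex|X|^p$ and $\Ex|X|^m$) to obtain the $p$-homogeneous estimate.

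\paragraph{Lower bound.}
The strategy is induction on $n$, exploiting the factorisation $\sum_{i=0}^nv_iR_i = v_0+X_1S$ with $S=\sum_{i=0}^{n-1}v_{i+1}R_i'$ independent of $X_1$. The central lemma is a two-point inequality: for any fixed $a\in F$ and any $F$-valued random $Y$ independent of a scalar $X$ satisfying \eqref{ass_large1}--\eqref{ass_large2},
\[
\Ex\|a+XY\|^p\ge c_0\bigl(\|a\|^p+\Ex|X|^p\,\Ex\|Y\|^p\bigr),
\]
with $c_0=c_0(p,\mu,A,q,\lambda)>0$. I would prove this first conditionally on $Y$, splitting according to the size of $\|a\|$ versus $\|Y\|(\Ex|X|^p)^{1/p}$: in the small-$\|a\|$ regime, Paley--Zygmund for $|X|$---using the spread from \eqref{ass_large1} together with the tail control from \eqref{ass_large2}---gives $\Ex|X|^p\ind_{\{|X|\ge c(\Ex|X|^p)^{1/p}\}}\gtrsim\Ex|X|^p$, which combined with $\|a+XY\|\ge|X|\|Y\|-\|a\|$ yields the $\|Y\|^p\Ex|X|^p$ contribution; in the large-$\|a\|$ regime, Markov applied to $|X|\|Y\|$ shows $\|a+XY\|\ge\|a\|/2$ with probability at least $1/2$, recovering $\|a\|^p$. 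Averaging over $Y$ via Fubini then upgrades $\|Y\|^p$ to $\Ex\|Y\|^p$.

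\paragraph{Main obstacle.}
The delicate point is that naively inserting this two-point inequality into the recursion gives only $c_n\ge c_0\,c_{n-1}$, so $c_n\ge c_0^n\to 0$: a uniform constant is lost. To extract a constant depending only on $p,\mu,A,q,\lambda$ (and not on $n$), the two-point inequality must be refined into an asymmetric form
\[
\Ex\|a+XY\|^p\ge\|a\|^p+c_0\Ex|X|^p\,\Ex\|Y\|^p-\Delta,
\]
where the cross-term error $\Delta$ is strictly subdominant and can be absorbed at each step by invoking the upper bound on $\Ex\|S\|^p$ already established. The moment gap supplied by \eqref{ass_large2} (with $q>\max\{p-1,1\}$) is exactly what enables the control of $\Delta$, while \eqref{ass_large1} ensures the leading terms are non-degenerate. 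Carrying out this self-improvement uniformly over all normed spaces $F$ and all admissible distributions of the $X_i$, and then telescoping the resulting recursion to a bound independent of $n$, is the core technical difficulty.
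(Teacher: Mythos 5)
Your strategy for the upper bound has a genuine gap. The multinomial expansion for integer $p=m$ and the subsequent interpolation require finite moments $\Ex|X_i|^m$ with $m=\lceil p\rceil > p$, but the hypotheses only guarantee $\Ex|X_i|^p<\infty$ together with the \emph{downward} comparisons \eqref{ass_large3} between moments of order $p-k$ and $p-k+1$; nothing bounds $\Ex|X_i|^m$ in terms of $\Ex|X_i|^p$, and $\Ex|X_i|^m$ may simply be infinite. Even granting finiteness, the H\"older step takes you from $\Ex|\sum v_iR_i|^m\le C\sum\|v_i\|^m\Ex|R_i|^m$ to something of the form $\sum\|v_i\|^p(\Ex|R_i|^m)^{p/m}$, and $(\Ex|R_i|^m)^{p/m}$ dominates $\Ex|R_i|^p$ from the wrong side (Jensen), with no way to reverse this from the assumptions. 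The paper avoids both problems by iterating the elementary inequality $(x+y)^p\le x^p+2^p(yx^{p-1}+y^p)$, which only touches moments of order $p$ and $p-1$; the $(p-1)$-moment is then handled by induction on $\lceil p\rceil$, and \eqref{ass_large3} supplies precisely the geometric factor $\lambda_1^{(p-1)(i-k)}$ needed to sum the cross-terms via Young's inequality. That induction-on-$\lceil p\rceil$ structure is the indispensable ingredient you're missing.

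\textbf{Lower bound.} Here you correctly diagnose the central difficulty (the naive recursion loses a factor $c_0$ at every step) and correctly identify the remedy (an asymmetric estimate of the form $\Ex\|Y+Z\|^p\ge\Ex\|Y\|^p + (\text{const})\,\Ex\|Z\|^p - \Delta$ with $\Delta$ controlled by the moment gap $q>p-1$), and this matches the spirit of the paper's Lemma~\ref{lem:4}. However, the sketch stops short of the actual mechanism. The paper does \emph{not} peel one variable at a time and absorb $\Delta$ against the upper bound; instead it proves, by induction on $n$, a bound of the form
\[
\Ex\Bigl\|\sum_{i=0}^n v_iR_i\Bigr\|^p\ge \ve_0\|v_0\|^p+\sum_{i=1}^n\Bigl(\frac{\ve_1}{k}-c_i\Bigr)\|v_i\|^p,
\]
where the error terms $c_i$ are partial sums of a geometric series whose scale $\Phi=C_0\lambda^{(p-1)k}$ is made small by choosing $k$ large. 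The induction step splits into two cases according to whether $\ve_0\|v_0\|^p$ dominates $\Phi\sum_{i\ge k}\lambda^i\|v_i\|^p$: in one case conditioning on $X_1$ alone suffices, and in the other one conditions on $X_1,\ldots,X_k$ (on a good event where $|R_{2,k}|$ is small), uses the Chebyshev tail bound from \eqref{ass_large2} to verify the hypothesis $\Ex\|Y\|^{p-1}\|Z\|\le\gamma\Ex\|Z\|^p$ of Lemma~\ref{lem:4}, and then applies that lemma. Peeling $k$ variables at once, rather than one, is essential: it is what produces the factor $\lambda^{pk}$ that makes $\Phi$ (and hence all $c_i$) small uniformly in $n$. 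Your vaguer proposal of ``invoking the upper bound on $\Ex\|S\|^p$'' and ``telescoping'' would not, as stated, deliver a constant independent of $n$.
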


\noindent
{\bf Remark.} Proofs presented below show that Theorem \ref{thm_noniid_smallp} holds with
\[
c(p,\lambda,\delta,A)=\frac{\delta^3}{16k},
\mbox{ where $k$ is an integer such that }k\lambda^{2k-2}\leq \frac{\delta^3(1-\lambda)^2}{2^{12} A}. 
\]
In Theorem \ref{thm_noniid_largep} we can take
\[
C(p,\lambda_1,\ldots,\lambda_{\lceil p\rceil-1})
= 2^{\frac{p(p+1)}{2}}\prod_{1\leq j\leq \lceil p\rceil -1}\frac{1}{1-\lambda_j^{p-j}}
\]
and
\[
c(p,\mu,A,q,\lambda)=\frac{\mu^{3p}}{8k\cdot 2^{10p}\cdot 3^p}, \mbox{ where $k$ is an integer such that }
k\lambda^{pk}\leq \frac{(1-\lambda)\mu^{3p}}{8C_0\cdot 2^{10p}\cdot 3^p},
\]
\[
C_0=(1-\lambda)^{1-p}\left(\frac{2A}{3\lambda}\right)^p\left(\frac{2p}{(q+1-p)\ln2}\right)^{\frac{p}{q}}
48^{\frac{2p^2}{\min\{p-1,1\}}}.
\]

\medskip




Another consequence of Theorem \ref{thm_main} is an estimate for $L_p$-norms of linear combinations of the Riesz
products. Let $\te=\er/2\pi\zet$ be the one dimensional torus and $m$ be the normalized Haar measure on $\te$.
The Riesz products are defined on $\te$  by the formula
\[
\bar{R}_i(t)=\prod_{j=1}^i(1+\cos(n_jt)),\quad i=1,2,\ldots,
\]
where $(n_k)_{k\geq 1}$ is a lacunary increasing sequence of positive integers. 

It is well known that if coefficients $n_k$ grow sufficiently fast then 
$\|\sum_{i=0}^na_i\bar{R}_i\|_{L_p(\te)}\sim (\Ex|\sum_{i=0}^na_iR_i|^p)^{1/p}$ for $p\geq 1$, where $R_i$ are products of
independent random variables distributed as $\bar{R}_1$. Together with Theorem \ref{thm_main} this gives
an estimate for $\|\sum_{i=0}^na_i\bar{R}_i\|_{L_p(\te)}$.
Here is the more quantitative result.  

\begin{cor}
Suppose that $(n_k)_{k\geq 1}$ is an increasing sequence of positive integers such that $n_{k+1}/n_k\geq 3$ and
$\sum_{k=1}^{\infty}\frac{n_k}{n_{k+1}}<\infty$.
Then for any coefficients $a_0,a_1,\ldots,a_n\in \er$ and $p\geq 1$,
\[
c_p\sum_{i=0}^n|a_i|^p\int_{\te}|\bar{R}_i(t)|^p \de m(t)\leq 
\int_{\te}\left|\sum_{i=0}^na_i\bar{R}_i(t)\right|^p\de m(t)\leq 
C_p\sum_{i=0}^n|a_i|^p\int_{\te}|\bar{R}_i(t)|^p \de m(t),
\]
where $0<c_p\leq C_p<\infty$ are constants depending only on $p$ and the sequence $(n_k)$. 
\end{cor}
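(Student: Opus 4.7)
The strategy is to reduce the statement to the i.i.d.\ setting of Theorem \ref{thm_main}. The bridge, already flagged in the paragraph preceding the corollary, is the classical $L_p$-comparison between Riesz products on $\te$ and products of independent copies of $\bar R_1$.

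First I would introduce the independent model: let $X, X_1, X_2, \ldots$ be i.i.d.\ random variables with the law of $\bar R_1$ under $m$, i.e.\ the law of $1+\cos U$ for $U$ uniform on $\te$. Since $|X|=X$ takes values in $[0,2]$ and is clearly nondegenerate, the hypotheses of Theorem \ref{thm_main} are satisfied, and applying that theorem in the scalar case ($F=\er$, $v_i=a_i$) to $R_i=\prod_{j=1}^i X_j$ yields
\[
c_{p,X}\sum_{i=0}^n |a_i|^p \Ex|R_i|^p \;\leq\; \Ex\Big|\sum_{i=0}^n a_i R_i\Big|^p \;\leq\; C_{p,X}\sum_{i=0}^n |a_i|^p \Ex|R_i|^p.
\]
Next I would invoke the $L_p$-comparison stated in the text: for every $p\geq 1$ and all coefficients,
\[
\int_{\te}\Big|\sum_{i=0}^n a_i\bar R_i(t)\Big|^p \de m(t) \;\asymp\; \Ex\Big|\sum_{i=0}^n a_i R_i\Big|^p,
\]
with constants depending only on $p$ and $(n_k)$. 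Specializing this to a single nonzero coefficient also gives $\int_\te|\bar R_i|^p\,\de m \asymp \Ex|R_i|^p$. Substituting both comparisons into the previous display and absorbing the constants produces the two-sided bound of the corollary.

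The main obstacle is verifying the $L_p$-comparison in the second step. At the $L^2$ level the identity is transparent: the Fourier expansion $\bar R_i(t)=\sum_{\epsilon\in\{-1,0,1\}^i}2^{-|\epsilon|}\exp(it\sum_j\epsilon_j n_j)$ has, by $n_{k+1}/n_k\geq 3$, pairwise distinct frequencies, matching term-by-term the orthogonal expansion $R_i(U_1,\ldots,U_i)=\sum_\epsilon 2^{-|\epsilon|}\exp(i\sum_j\epsilon_j U_j)$ on $\te^\infty$ for independent uniform $U_j$. Upgrading this Plancherel identity to an $L_p$-equivalence for all $p\geq 1$ requires the stronger summability $\sum_k n_k/n_{k+1}<\infty$, and can be effected either by a probabilistic symmetrization --- introducing independent random phases $\theta_j$ inside each factor $1+\cos(n_j t+\theta_j)$ and averaging --- or by a direct Fourier estimate controlling spectral overlaps between distinct $\bar R_i$'s. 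Since this equivalence is a well-known ingredient, the actual novel content of the corollary is the application of Theorem \ref{thm_main} and the bookkeeping of constants.
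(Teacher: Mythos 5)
Your proposal is correct and follows exactly the route of the paper: introduce the i.i.d.\ model $X_j\sim 1+\cos U$, apply Theorem~\ref{thm_main} with scalar coefficients, and transfer between $\int_\te|\sum a_i\bar R_i|^p\de m$ and $\Ex|\sum a_iR_i|^p$ (and likewise for a single $\bar R_i$) via the $L_p$-equivalence, which the paper simply attributes to Meyer~\cite{YM} rather than reproving. Your added sketch of how that equivalence could be established goes beyond what the paper does but does not change the logical structure of the argument.
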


\begin{proof}
Let $X_1,X_2,\ldots$ be independent random variables distributed as $1+\cos(Y)$, where $Y$ is uniformly
distributed on $[0,2\pi]$ and $R_i$ be as in \eqref{defR}. By the result of Y.~Meyer \cite{YM},
$\frac{1}{A}\|\sum_{i=0}^na_i\bar{R}_i\|_{L_p}\leq (\Ex|\sum_{i=0}^na_iR_i|^p)^{1/p}\leq A\|\sum_{i=0}^na_i\bar{R}_i\|_{L_p}$ 
(in particular also $\frac{1}{A}\|\bar{R}_i\|_{L_p}\leq (\Ex R_i^p)^{1/p}\leq A\|\bar{R}_i\|_{L_p}$),
where $A$ depends only on $p$ and the sequence $(n_k)$.
Thus the estimate follows 
by Theorem \ref{thm_main}.
\end{proof}

\bigskip
Theorem \ref{thm_main} has also an immediate application to the stationary $\er^d$--valued solution $S$ of the 
random difference equation
\begin{equation}
\label{diffeq}
S=XS+B,
\end{equation}
where the equality is meant in law and $(X,B)$ is a random variable with values in $[0,\infty)\times \er^d$ independent of $S$ 
such that for some $p>0$, 
\begin{equation}
\tag{GK1} 
\Ex X^p=1,\quad \Ex \| B\| ^p<\infty \quad \mbox{and}\quad \Pr(X=1)<1.  
\end{equation}

Over the last 40 years equation \eqref{diffeq}
and its various modifications have attracted a lot of attention \cite{AB, AM, BBE, BDGHU, DF, G, GG, Gui, HW, K, KP, RS,V}.
It has a wide spectrum of applications including random walks in random environment, branching processes, 
fractals, finance and actuarial mathematics, telecommunications, various physical and biological models. 
In particular, the tail behaviour of $S$ is of interest. 

It is well known that in law
\[
S=\sum _{i=1}^{\infty }R_{i-1}B_i,
\]
where $R_{i-1}=X_1\cdots X_{i-1}, R_0=1$ and $(X_i,B_i)_{i\geq 1}$ is an i.i.d sequence of r.v.'s with the same distribution as $(X,B)$. 
Under the additional assumption that 
\begin{equation}
\tag{GK2} 
\log X \mbox{ conditioned on } \{ X\neq 0\}  \mbox{ is non lattice and } \Ex X^p\log ^+X<\infty,
\end{equation}
$S$ has a heavy tail behaviour, i.e. the limit 
\[
\lim_{t\to \infty}t^p\Pr( \| S\| >t )=c_{\infty}(X,B)
\]
exists and $c_{\infty}(X,B)$ is strictly positive provided that $\Pr(Xv+B=v) <1$ for every 
$v\in \R ^d$. If $\Pr(Xv+B=v) =1$ then $S_n= v-R_{n-1}v\to v=S$. Assumptions (GK1), (GK2) together with 
$\Pr (Xv+B=v) <1$ will be later on referred to as the \emph{Goldie-Kesten conditions}. Let
\[
S_n = \sum _{i=1}^nR_{i-1}B_i.
\] 
It turns out that the sequence $\Ex \| S_n\| ^p$ is closely related to $c_{\infty}(X,B)$. 
Recently, it has been proved in \cite{BDZ} that under the \emph{Goldie-Kesten conditions} plus a little bit 
stronger moment assumption
$\Ex (X^{p+\ve }+\| B\| ^{p+\ve} )<\infty $ for some $\ve >0$, we have
\[
\lim _{n\to \infty }\frac{1}{np\rho}\E \| S_n\| ^p=c_{\infty} (X,B)>0,
\]
where $\rho:= \E X^p\log X$. 

Now suppose that $X,B$ are independent. Then Theorem \ref{thm_main} implies that for every $n$
\begin{equation}
\label{boundsp}
c_{p,X}\Ex \| B\| ^p\leq \frac{1}{n}\Ex \| S_n\| ^p\leq C_{p,X} \Ex \| B\| ^p,
\end{equation}
which gives uniform bounds on the Goldie constant $c_{\infty} (X,B)$ depending only on the law of $X$ and 
$\Ex \| B\| ^p$ and independent of the dimension. Moreover, in some particular cases when constants 
$\lambda , \delta, \mu , q, \lambda_k$ in  \eqref{ass_small1}--\eqref{ass_large3} can be estimated more carefully, 
\eqref{boundsp} may give  some information about the size of the Goldie constant which is of some value, especially in the situation when none of the
existing formulae for it is satisfactory enough (see \cite{CV, ESZ, BDZ, BJMW}).

We can go even further. With a slight modification of the proof we can get rid of independence of $X,B$ and obtain 
the following theorem.

\begin{thm}
\label{mthmper}
Suppose that $F$ is a separable Banach space. Let $p>0$ and let an i.i.d. sequence 
$(X,B), (X_1,B_1),...$ with values in $[0,\infty )\times F$ 
be such that $X$ is nondegenerate and $\Ex \| B\| ^p, \Ex X^p <\infty$. Assume additionally 
that 
\begin{equation}
\label{eq:nondeg}
\Pr (Xv+B=v) <1 \mbox{ for every }v\in F. 
\end{equation}
Then there are constants  $c_p(X,B)>0$ which depend on $p$ and the distribution of $(X,B)$
and $C_p(X)<\infty$ which depend on $p$ and the distribution of $X$
such that for every $n$,
\begin{equation}
\label{bounds}
c_p(X,B)\E \| B\| ^p\sum _{i=1}^n \Ex R_{i-1}^p\leq \Ex \left\| \sum _{i=1}^n R_{i-1} B_i\right\| ^p
\leq C_p(X) \Ex \| B\| ^p \sum _{i=1}^n \Ex R_{i-1}^p.
\end{equation}
\end{thm}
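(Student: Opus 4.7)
The idea is to extend Theorem \ref{thm_main} to pairs $(X_i, B_i)$ that are i.i.d.\ across $i$ but may be dependent within each index; the upper and lower bounds need different ingredients. For the upper bound, whose constant $C_p(X)$ depends only on the law of $X$, dependence inside a pair cannot hurt. When $p \le 1$, subadditivity of $t \mapsto t^p$ combined with the fact that $R_{i-1}$ is a function of $(X_j, B_j)_{j<i}$ (hence independent of $B_i$) gives
\[
\E\Bigl\|\sum_{i=1}^n R_{i-1} B_i\Bigr\|^p \le \sum_{i=1}^n \E R_{i-1}^p \|B_i\|^p = \E\|B\|^p \sum_{i=1}^n \E R_{i-1}^p.
\]
When $p > 1$ I would revisit the proof of the upper bound in Theorem \ref{thm_noniid_largep}: it uses an inductive Minkowski-type step driven by \eqref{ass_large3}, a condition involving only the marginal moments of $X$, and the induction requires only the joint independence of the $X_j$'s and the independence of the coefficient $v_i$ from later $X_j$'s ($j > i$). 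In our setting the $X_j$'s are independent and $B_i$ depends on the single pair $(X_i, B_i)$, so the induction goes through with $v_i = B_i$ and produces $C_p(X) = C(p, \lambda_1, \ldots, \lambda_{\lceil p \rceil - 1})$.

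For the lower bound the joint law of $(X, B)$ genuinely enters: the example $B = v(1 - X)$ gives $T_n = v(1 - R_n)$, which stays bounded even when $\sum \E R_{i-1}^p \to \infty$, and \eqref{eq:nondeg} is exactly what excludes such degeneracies. My plan is to exploit the distributional identity $T_n \stackrel{d}{=} B + X T_{n-1}'$, with $T_{n-1}' \stackrel{d}{=} T_{n-1}$ independent of $(X, B)$. The target is a one-step bound $\E\|B + Xw\|^p \ge \gamma(\E\|B\|^p + \|w\|^p \E X^p)$ (with $\gamma = \gamma(X, B) > 0$), which by conditioning on $T_{n-1}'$ would give the recursion
\[
\E\|T_n\|^p \ge \gamma \E\|B\|^p + \gamma \E X^p \cdot \E\|T_{n-1}\|^p.
\]
This one-step bound can fail pointwise at isolated $w$ (e.g., if $B = -X w_0$ a.s., the left side vanishes at $w = w_0$), but using \eqref{eq:nondeg} one can establish the analogous averaged statement for $w = T_{n-1}'$, since $T_{n-1}'$ assigns zero mass to the degenerate points.

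\textbf{Main obstacle.} Naive iteration of the recursion yields only $\E\|T_n\|^p \ge \gamma \E\|B\|^p \sum_{k=0}^{n-1} (\gamma \E X^p)^k$, which matches the target $\E\|B\|^p \sum (\E X^p)^k$ only up to a factor $\gamma^n$, and this is insufficient whenever $\gamma < 1$. The remedy is to iterate in blocks of $k$ steps: writing $T_{mk} = \sum_{j=1}^m \hat R_{j-1} \hat B_j$ with $\hat X_j = X_{(j-1)k+1} \cdots X_{jk}$ and $\hat B_j$ the partial sum inside the $j$-th block, the pairs $(\hat X_j, \hat B_j)$ are again i.i.d.\ with the same within-pair dependence structure, so after $k$ steps the effective per-block ratio is of order $(\E X^p)^k$ rather than $\gamma\E X^p$, and iterating at the block level reconstructs the correct sum. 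Choosing $k$ large enough (in the spirit of the explicit constants in the Remark after Theorem \ref{thm_noniid_largep}) gives the quantitative content, and proving the averaged $k$-step lower bound while respecting the within-pair dependence is the most delicate part of the proof.
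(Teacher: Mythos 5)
Your upper bound matches the paper: the $p\le 1$ case is immediate from subadditivity and independence of $R_{i-1}$ from $B_i$, and for $p>1$ the paper indeed just re-runs the induction of Proposition~\ref{prop:uppb} with $v_i$ replaced by $B_i$, using only the moments of $X$ (so $C_p(X)$ depends only on $\mathrm{law}(X)$). You have also correctly identified the two structural features of the lower bound: the degenerate affine case (the paper's case (C1), where $w+B+Xu=0$ a.s.\ for some $w,u$, which it handles by a separate explicit computation reducing to scalar coefficients $v_i=1$), and, in the generic case, the need for a one-step estimate of the type $\Ex\|w+B+Xu\|^p\geq\delta\max\{\|w\|^p,\|u\|^p,\Ex\|B\|^p\}$ (the paper's Lemma~\ref{mlem}, proved by the same sort of compactness argument you gesture at).

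However, the step where you resolve the geometric-decay obstruction does not work as stated, and this is the crux of the whole theorem. You correctly observe that iterating the one-step bound $\Ex\|T_n\|^p\ge\gamma(\Ex\|B\|^p+\Ex X^p\,\Ex\|T_{n-1}\|^p)$ gives only $\gamma^n$ times what is needed. But blocking does not fix this: if you apply the same one-step lemma to the block pair $(\hat X,\hat B)=(X_1\cdots X_k,\sum_{i=1}^kR_{i-1}B_i)$, the resulting constant $\gamma'=\gamma'(k)$ is again strictly less than $1$ and there is no reason for $\gamma'(k)\to 1$ as $k\to\infty$ (if anything, $\hat X$ spreads out as $k$ grows). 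Iterating at block level then gives $\sum_{j<n/k}(\gamma')^{j}$, which is a bounded geometric series when $\Ex X^p=1$, not a quantity growing like $n$. So the ``effective per-block ratio of order $(\Ex X^p)^k$'' claim is unsubstantiated, and the blocked recursion suffers exactly the same defect as the unblocked one.

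What the paper does is genuinely different and does not rely on iterating any single-step (or $k$-step) contraction. Sections~\ref{sec:lowerlarge}--\ref{sec:lowersmall}, adapted in Proposition~\ref{mainprop}, run an induction on $n$ whose hypothesis is an explicit lower bound $\ve_0\|w\|^p+\sum_i(\ve_1/k-c_i)\Ex\|B_i\|^p$ with correction terms $c_i$ that stay uniformly small once $k$ is chosen large. The induction step splits into two cases according to whether $\ve_0\|w\|^p$ is dominated by a tail sum $\Phi\sum_{i\geq k}\lambda^{i-1}\Ex\|B_i\|^p$. In the case where it is not dominated, the crucial ingredient is the additive Lemma~\ref{lem:4s} (or~\ref{lem:4}), which under a smallness hypothesis on $\Ex\|Z\|^p\ind_{\{\|Y\|^p\geq\Ex\|Z\|^p/8\}}$ yields $\Ex\|Y+Z\|^p\geq\Ex\|Y\|^p+\tfrac12\Ex\|Z\|^p$ with coefficient \emph{one} on the dominant term $\Ex\|Y\|^p$. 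This unit coefficient is what prevents geometric loss; the smallness hypothesis is verified via the tail estimate of Lemma~\ref{lem:2s}, which is where $\lambda<1$ from \eqref{ass_small1}/\eqref{ass_large2} enters. Without a device of this type your recursion cannot escape the factor $\gamma^{n}$ (or $(\gamma')^{n/k}$), so the lower bound in your plan has a real gap.
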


Theorem \ref{mthmper} specified to our situation with $\Ex X^p=1$ gives
\[
c_p(X,B)\Ex \| B\| ^p\leq \frac{1}{n}\Ex \| S_n\| ^p\leq C_p(X) \Ex \| B\| ^p.
\]
This leads to an estimate for the Goldie constant but now with  $c_p(X,B), C_p(X)$ depending on the law of $(X,B)$. 
Again, in particular cases, a  careful examination of the constants involved in the proof may give a more satisfactory 
answer. Also, in view of Theorem \ref{mthmper}, it would be worth relaxing the assumptions of \cite{BDZ}.

\medskip

The paper is organized as follows. In Section \ref{sec:lowerlarge} and \ref{sec:lowersmall} we derive lower bounds in 
Theorems \ref{thm_noniid_smallp} and \ref{thm_noniid_largep}. Then in Section \ref{sec:upper} we establish upper bounds in
both theorems. We conclude in Section \ref{sec:perp} with a discussion of the proof of Theorem \ref{mthmper}.

\section{Lower bound for $p>1$}
\label{sec:lowerlarge}

In this section we will show the lower bound in Theorem \ref{thm_noniid_largep}. Since it is only a matter
of normalization we will assume that
\begin{equation}
\label{ass1a}
X_1,X_2,\ldots \mbox{ are independent r.v.'s such that }\Ex |X_i|^p=1. 
\end{equation}
In particular this implies that $\Ex |R_i|^p=1$ for all $i$.

We also set for $k=1,2,\ldots$
\[
R_{k,k-1}\equiv 1 \quad \mbox{and}\quad R_{k,i}:=\prod_{j=k}^i X_i \mbox{ for }i\geq k.
\]
Observe that $R_i=R_kR_{k+1,i}$ for $i\geq k\geq 0$.

We begin with several lemmas.

\begin{lem}
\label{lem:1}
Suppose that a r.v.\ $X$ satisfies $\Ex||X|-\Ex |X|| \geq \mu$ and 
$\Ex||X|-\Ex |X||\ind_{\{|X| > A\}} \leq \frac{1}{4} \mu$. Then for all $p \geq 1$ and $u,v \in (F,\|\ \|)$ we have
\[
\Ex \|uX+v\|^p\geq \Ex\|uX+v\|^p \ind_{\{ |X| \leq A\}} 
\geq 
\frac{\mu^p}{8^p} \min\left\{ 1, \frac{1}{(\Ex |X|)^p}  \right\} \max \{ \|u\|^p,\|v\|^p \}.		
\] 
\end{lem}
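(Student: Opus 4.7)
The plan is to first reduce to $p=1$. Since $\pi:=\Pr(|X|\leq A)\leq 1$ and $p\geq 1$, H\"older's inequality yields $\Ex\|uX+v\|\ind_{\{|X|\leq A\}}\leq \pi^{1-1/p}(\Ex\|uX+v\|^p\ind_{\{|X|\leq A\}})^{1/p}\leq (\Ex\|uX+v\|^p\ind_{\{|X|\leq A\}})^{1/p}$, so it suffices to prove $\Ex\|uX+v\|\ind_{\{|X|\leq A\}}\geq \frac{\mu}{8\max(1,m)}\max(\|u\|,\|v\|)$, where $m:=\Ex|X|$. Before estimating I would record several consequences of the hypotheses. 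Writing $m_A:=\Ex|X|\ind_{\{|X|\leq A\}}/\pi$, the bound $\Ex||X|-m|\ind_{\{|X|>A\}}\leq \mu/4$ gives $\Ex||X|-m|\ind_{\{|X|\leq A\}}\geq 3\mu/4$ and $|(m-m_A)\pi|=|\Ex(|X|-m)\ind_{\{|X|>A\}}|\leq \mu/4$, and combining these through the triangle inequality yields $\Ex||X|-m_A|\ind_{\{|X|\leq A\}}\geq \mu/2$. Crucially, since $m_A\leq A$ and $\Ex[|X|\mid |X|>A]\geq A$, the decomposition $m=\pi m_A+(1-\pi)\Ex[|X|\mid |X|>A]$ yields $m_A\leq m$.

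The lower bound in terms of $\|u\|$ would come from symmetrization. If $X'$ is an independent copy of $X$, the reverse triangle inequality gives $\|uX+v\|+\|uX'+v\|\geq \|u\|\,||X|-|X'||$. Multiplying by $\ind_{\{|X|\leq A\}}\ind_{\{|X'|\leq A\}}$ and taking expectation (both terms on the left then contribute $\pi\,\Ex\|uX+v\|\ind_{\{|X|\leq A\}}$ by independence and symmetry), and applying Jensen's inequality conditionally on $X$ to the inner expectation in $X'$, which gives $\Ex[||X|-|X'||\ind_{\{|X'|\leq A\}}\mid X]\geq \pi||X|-m_A|$, produces
\[
2\pi\,\Ex\|uX+v\|\ind_{\{|X|\leq A\}}\geq \|u\|\pi\,\Ex||X|-m_A|\ind_{\{|X|\leq A\}}\geq \|u\|\pi\mu/2,
\]
whence $\Ex\|uX+v\|\ind_{\{|X|\leq A\}}\geq \mu\|u\|/4$. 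Since $\max(1,m)\geq 1$, this already dominates $\mu\|u\|/(8\max(1,m))$.

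For the bound in terms of $\|v\|$ (assuming $\|v\|\geq\|u\|$), I would split on whether $\|v\|\geq 2\|u\|m_A$. If so, the pointwise inequality $\|uX+v\|\geq |\|u\||X|-\|v\||$ followed by Jensen gives $\Ex\|uX+v\|\ind_{\{|X|\leq A\}}\geq \pi|\|u\|m_A-\|v\||\geq \pi\|v\|/2$; the required lower bound $\pi\geq \mu/(4\max(1,m))$ is obtained by separately handling the regime $A\leq 2m$ (using $(A+m)\pi\geq \Ex||X|-m|\ind_{\{|X|\leq A\}}\geq 3\mu/4$) and the regime $A>2m$ (Markov's inequality gives $\Pr(|X|>A)\leq m/A<1/2$, and combined with $\mu\leq 2m$ this closes the case). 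In the opposite case $\|v\|<2\|u\|m_A$, one has $\|u\|\geq \|v\|/(2m_A)\geq \|v\|/(2m)$, so the $\|u\|$ estimate transfers directly to $\mu\|v\|/(8m)\geq \mu\|v\|/(8\max(1,m))$. The main difficulty I anticipate is keeping every estimate uniform in $A$: a careless triangle-inequality approach loses a factor of $A$, and the argument is saved by the bound $m_A\leq m$ (truncation cannot increase the mean) together with the observation that the lower bound on $\pi$ never degenerates below $\mu/(4\max(1,m))$, regardless of the regime of $A$.
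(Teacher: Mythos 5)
Your proof is correct, and it takes a genuinely different route from the paper's. The paper's argument conditions on $\{|X|\leq A\}$, forms the conditioned variable $Y$ and the signed conditional mean $t:=\Ex Y$, proves the single estimate $\Ex|X-t|\ind_{\{|X|\leq A\}}\geq\mu/4$, and then exploits two algebraic rearrangements of $uY+v$ together with $\Ex\|uY+v\|\geq\|tu+v\|$: the identity $tuY+tv=v(t-Y)+(tu+v)Y$ yields the $\|v\|$ bound after dividing by $|t|+\Ex|Y|\leq 2\Ex|X|$, while $uY+v=u(Y-t)+(tu+v)$ yields the $\|u\|$ bound. In that scheme, $\pi:=\Pr(|X|\leq A)$ appears both in a numerator (from $\Ex\|uX+v\|\ind_{\{|X|\leq A\}}=\pi\Ex\|uY+v\|$) and in a denominator (from $\Ex|Y-t|=\Ex|X-t|\ind_{\{|X|\leq A\}}/\pi$), so it cancels and never needs to be bounded from below. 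You instead pivot around $m_A=\Ex[|X|\mid |X|\leq A]$ (the conditional mean of the modulus, not of $X$), handle the $\|u\|$ bound by symmetrizing with an independent copy $X'$ and applying Jensen, and handle the $\|v\|$ bound by a case split on whether $\|v\|\geq 2\|u\|m_A$. The price is that in the first case the factor $\pi$ does not cancel, so you must prove $\pi\geq\mu/(4\max\{1,\Ex|X|\})$; your two-regime argument ($A\leq 2\Ex|X|$ via $(A+\Ex|X|)\pi\geq 3\mu/4$, and $A>2\Ex|X|$ via Markov plus $\mu\leq 2\Ex|X|$) does this correctly. The auxiliary fact $m_A\leq\Ex|X|$ is also needed (to transfer the $\|u\|$ estimate in the complementary case) and is correctly justified. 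Net effect: your argument avoids the paper's algebraic identities and the signed conditional mean $t$, at the cost of a longer case analysis and an explicit lower bound on $\pi$ that the paper's cancellation renders unnecessary; the symmetrization step for the $\|u\|$ bound is a clean alternative to the paper's manipulation.
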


\begin{proof}
Let $Y$ has the same distribution as $X$ conditioned on the set $\{|X| \leq A \}$. Let us define $t:=\Ex Y$. 
Then $|t|\leq \Ex|Y| \leq \Ex |X|$. Clearly, $\Ex(|X|-\Ex |X|)_+ = \Ex(|X|-\Ex |X|)_- \geq \frac{1}{2} \mu$.
Therefore,
\begin{align*}
\Ex|X-t|\ind_{\{ |X| \leq A \}} 
& \geq \Ex ||X|-|t|| \ind_{\{ |X| \leq A \}}
\geq \Ex(|X|-|t|)_+ \ind_{\{ |X| \leq A \}}  
\\
&\geq \Ex(|X|-\Ex |X|)_+ \ind_{\{ |X| \leq A \}} 
\\
&= \Ex(|X|-\Ex |X|)_+ - \Ex(|X|-\Ex |X|)_+ \ind_{\{ |X| > A \}} 
\\ 
&\geq  \frac{1}{2} \mu -  \Ex||X|-\Ex |X|| \ind_{\{ |X| > A \}} \geq \frac{1}{2}\mu - \frac{1}{4}\mu = \frac{1}{4}\mu. 
\end{align*}

We obtain
\[
|t|\Ex \|uY+v\| =\Ex \|v(t-Y) + (tu+v)Y\| \geq \|v\|\Ex|Y-t| - \|tu+v\|\Ex|Y|. 
\]
Since $\Ex \|uY+v\| \geq \|u\Ex Y+v\|= \|tu+v\|$ and $|t|\leq \Ex|Y|\leq \Ex|X|$ we have
\begin{align*}
\Ex \|uY+v\| 
&\geq \frac{1}{|t|+\Ex|Y|} \|v\| \Ex|Y-t|
\geq \frac{\|v\|}{2\Ex|X| \Pr(|X| \leq A)} \Ex |X-t| \ind_{\{ |X| \leq A\}}
\\ 
&\geq \frac{\mu}{8\Ex|X|} \frac{\|v\|}{\Pr(|X| \leq A)}.  
\end{align*}
We arrive at
\begin{align*}
\Ex \|uX+v\|^p \ind_{\{ |X| \leq A\}} 
& \geq  \left( \Ex \|uX+v\| \ind_{\{ |X| \leq A\}} \right)^p  = \left( \Ex \|uY+v\| \Pr(|X| \leq A)  \right)^p 
\\ 
&\geq \frac{\mu^p}{8^p (\Ex |X|)^p}\|v\|^p. 
\end{align*}

We also have
\[
\Ex \|uY+v\| = \Ex \| u(Y-t) + tu+v \| \geq \|u\| \Ex|Y-t| - \|tu+v\|.
\]
Therefore 
\[
\Ex \|uY+v\| \geq \frac{\|u\|}{2} \Ex|Y-t| \geq \frac{\mu}{8} \frac{\|u\|}{\Pr(X \leq A)}
\]
and as before we get that $\Ex \|uX+v\|^p \ind_{\{ X \leq A\}} \geq \frac{\mu^p}{8^p}  \|u\|^p$.		
\end{proof}

\begin{lem}
\label{lem:3}
Assume that \eqref{ass1a} and \eqref{ass_large1} hold. Then for any $v_0, v_1,\ldots, v_n \in (F,\|\ \|)$ we have
\[
\Ex \left\|\sum_{i=0}^n v_i R_i\right\|^p \geq \frac{\mu^{2p}}{64^p} \max_{1 \leq i \leq n} \|v_i\|^p 
\geq \frac{\mu^{2p}}{64^p} \cdot \frac{1}{n} \sum_{i=1}^n \|v_i\|^p.
\]
\end{lem}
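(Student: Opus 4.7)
The second inequality is simply the trivial averaging bound $\max_i\|v_i\|^p\ge \tfrac{1}{n}\sum_{i=1}^n\|v_i\|^p$, so the content is the first one. For that, I would fix $k\in\{1,\dots,n\}$ and prove
$$\Ex\left\|\sum_{i=0}^n v_iR_i\right\|^p \;\geq\; \frac{\mu^{2p}}{64^p}\,\|v_k\|^p,$$
and then take the maximum over $k$. The two factors of $\mu^p/8^p$ are to come from two separate applications of Lemma~\ref{lem:1} to the two independent variables $X_k$ and $X_{k+1}$.

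For the first application, I would isolate $X_k$. Using $R_i=R_{k-1}X_kR_{k+1,i}$ for $i\ge k$ (with $R_{k+1,k}\equiv 1$), write
$$\sum_{i=0}^n v_iR_i \;=\; W + X_k U, \qquad W:=\sum_{i=0}^{k-1}v_iR_i,\quad U:=R_{k-1}\sum_{i=k}^n v_iR_{k+1,i},$$
so that $W$ and $U$ are both measurable with respect to $\sigma((X_j)_{j\neq k})$ and $X_k$ is independent of this $\sigma$-algebra. Since $\Ex|X_k|^p=1$ together with $p>1$ forces $\Ex|X_k|\le 1$ by Jensen, the factor $\min\{1,1/(\Ex|X_k|)^p\}$ in Lemma~\ref{lem:1} equals $1$; applying that lemma conditionally on $(X_j)_{j\neq k}$ then yields
$$\Ex\left\|\sum_{i=0}^n v_iR_i\right\|^p \;\geq\; \frac{\mu^p}{8^p}\,\Ex\|U\|^p.$$

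For the second application, I would bound $\Ex\|U\|^p$ from below by $(\mu^p/8^p)\|v_k\|^p$. Pulling $R_{k-1}$ out via independence and the normalization $\Ex|R_{k-1}|^p=1$, one reduces to $\Ex\|v_k+\sum_{i=k+1}^n v_iR_{k+1,i}\|^p$. If $k=n$ this is simply $\|v_n\|^p$, which dominates $(\mu^p/8^p)\|v_n\|^p$ because $\mu/8\le 1/4$. If $k<n$, I would factor $R_{k+1,i}=X_{k+1}R_{k+2,i}$ to rewrite the sum as $X_{k+1}U''$ with $U''$ independent of $X_{k+1}$, and invoke Lemma~\ref{lem:1} a second time (conditioning on $U''$) to extract the additional $\mu^p/8^p$. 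Chaining the two bounds and maximizing over $k\in\{1,\dots,n\}$ completes the argument.

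There is no real obstacle here: the whole proof is careful bookkeeping of two conditional applications of Lemma~\ref{lem:1}. The only subtlety worth flagging is that in order to make the constant in Lemma~\ref{lem:1} clean one needs $\Ex|X_k|\le 1$, which comes for free from $\Ex|X_k|^p=1$ and $p>1$; and one must handle the boundary case $k=n$ where only a single application of the lemma is available, which is fine because $\mu/8\le 1/4$ makes the resulting constant still larger than $\mu^{2p}/64^p$.
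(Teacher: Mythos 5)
Your argument is correct and is essentially the same as the paper's: both proofs fix an index, exhibit the sum as a decomposition in which $X_j$ and then $X_{j+1}$ can be isolated, use Jensen's inequality (via $\Ex|X_j|^p=1$ and $p>1$) to show the $\min$-factor in Lemma~\ref{lem:1} is $1$, and apply Lemma~\ref{lem:1} twice conditionally. The only superficial difference is bookkeeping: you factor out $R_{k-1}$ by independence and the normalization $\Ex|R_{k-1}|^p=1$ before the second application, and treat $k=n$ as a boundary case, whereas the paper writes the decomposition as $Y+X_j(v_jR_{j-1}+X_{j+1}Z)$ (keeping $R_{j-1}$ inside and allowing $Z=0$ when $j=n$), which handles all $j$ uniformly since the sequence $X_1,X_2,\ldots$ is infinite.
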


\begin{proof}
For $1 \leq j \leq n$ we have $\sum_{i=0}^n v_i R_i = Y + X_j(v_jR_{j-1} + X_{j+1} Z)$, where $Y$ and $Z$ are 
independent of $X_j$ and $X_{j+1}$. Observe that $\Ex |X_j| \leq 1$ and $\Ex |X_{j+1}| \leq 1$. 
Thus, using Lemma \ref{lem:1} twice, we obtain
\[
\Ex \left\|\sum_{i=0}^n v_i R_i\right\|^p \geq \frac{\mu^p}{8^p} \Ex \|v_jR_{j-1} + X_{j+1} Z\|^p 
\geq \frac{\mu^{2p}}{64^p} \|v_j\|^p \Ex |R_{j-1}|^p = \frac{\mu^{2p}}{64^p} \|v_j\|^p.  
\]
\end{proof}

\begin{lem}
\label{lem:2}
Assume that \eqref{ass1a} holds and there exist $q>1$ and $0<\lambda<1$ such that for all $i$, 
$(\Ex |X_i|^q)^{1/q}\leq \lambda$. Then for any $v_0,v_1,\ldots,v_n\in (F,\|\ \|)$ and $t>0$,
\[
\Pr\left(\left\|\sum_{i=0}^n v_i R_i\right\|^p \geq t\sum_{i=0}^n \lambda^i \|v_i\|^p  \right) 
\leq (1-\lambda)^{\frac{(1-p)q}{p}} t^{-\frac{q}{p}}.
\]
\end{lem}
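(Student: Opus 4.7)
The plan is to apply Markov's inequality to $\|S\|^q$, where $S:=\sum_{i=0}^n v_i R_i$. The normalization $\Ex|X_i|^p=1$ combined with $(\Ex|X_i|^q)^{1/q}\leq \lambda<1$ forces $q<p$ by monotonicity of $L_r$-norms, which is consistent with the slow tail exponent $q/p<1$ appearing in the statement. Writing $M:=\sum_{i=0}^n\lambda^i\|v_i\|^p$ and rewriting $\Pr(\|S\|^p\geq tM)=\Pr(\|S\|\geq (tM)^{1/p})$, Markov's inequality reduces the task to establishing
\[
\Ex\|S\|^q \;\leq\; (1-\lambda)^{(1-p)q/p}\,M^{q/p}.
\]

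To obtain this bound I would first apply the triangle inequality, $\|S\|\leq \sum_{i=0}^n\|v_i\|\,|R_i|$, and then Minkowski's inequality in $L_q(\Pr)$ (valid since $q\geq 1$), yielding
\[
(\Ex\|S\|^q)^{1/q}\leq \sum_{i=0}^n\|v_i\|(\Ex|R_i|^q)^{1/q}.
\]
Independence of the $X_j$'s together with the hypothesis give $(\Ex|R_i|^q)^{1/q}=\prod_{j=1}^i(\Ex|X_j|^q)^{1/q}\leq \lambda^i$, so the entire problem boils down to comparing $\sum_{i=0}^n\|v_i\|\lambda^i$ with $M^{1/p}$ up to the claimed constant.

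This final comparison is a direct application of H\"older's inequality with exponents $p$ and $p/(p-1)$ to the factorization $\|v_i\|\lambda^i=(\|v_i\|\lambda^{i/p})\cdot \lambda^{i(p-1)/p}$; the first factor produces $M^{1/p}$ and the second produces $(\sum_{i=0}^n\lambda^i)^{(p-1)/p}\leq(1-\lambda)^{-(p-1)/p}$. Raising to the $q$-th power turns this into exactly the factor $(1-\lambda)^{(1-p)q/p}$ that Markov asks for. There is no real obstacle in this argument; the only conceptual choice is to apply Markov at exponent $q$ rather than $p$, because $q<p$ is precisely what lets independence enter multiplicatively through $(\Ex|R_i|^q)^{1/q}\leq\lambda^i$ and simultaneously yields the tail decay $t^{-q/p}$ claimed in the statement.
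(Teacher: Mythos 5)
Your proposal is correct and follows essentially the same path as the paper: triangle plus Minkowski in $L_q$ to pass to $\sum\|v_i\|\lambda^i$, then H\"older with exponents $p$ and $p/(p-1)$ applied to the factorization $\|v_i\|\lambda^i = (\|v_i\|\lambda^{i/p})\lambda^{i(p-1)/p}$, then Markov/Chebyshev at the $q$-th power. Your added observation that the hypotheses force $q<p$ is correct and is implicitly used in the paper as well (the H\"older step needs $p>1$, and $q<p$ is what makes the tail exponent $q/p$ lie in $(0,1)$), but it is not a point of divergence from the paper's argument.
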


\begin{proof}
Using Minkowski's and H\"older's inequalities we obtain
\begin{align*}
\left(\Ex \left\|\sum_{i=0}^n  v_i R_i \right\|^{q}\right)^{\frac{1}{q}} 
& \leq \sum_{i=0}^n \left( \Ex \|v_i R_i\|^{q} \right)^{\frac{1}{q}} 
\leq \sum_{i=0}^n \|v_i\| \lambda^i 
= \sum_{i=0}^n \|v_i\| \lambda^{\frac{i}{p}} \lambda^{\frac{p-1}{p}i} 
\\ 
& \leq \left( \sum_{i=0}^n \|v_i\|^p \lambda^i \right)^{\frac{1}{p}} 
\left(\sum_{i=0}^n \lambda^i\right)^{\frac{p-1}{p}} . 
\end{align*}
Thus,
\[
\Ex \left\|\sum_{i=0}^n  v_i R_i \right\|^{q} \leq 
\left(\sum_{i=0}^n \|v_i\|^p \lambda^i  \right)^{\frac{q}{p}}(1-\lambda)^{-\frac{(p-1)q}{p}}.
\]
By Chebyshev's inequality we get
\[
\Pr\left(\left\|\sum_{i=0}^n v_i R_i\right\|^{q} \geq 
t^{\frac{q}{p}} \left(\sum_{i=0}^n \lambda^i \|v_i\|^p \right)^{\frac{q}{p}}\right)
\leq (1-\lambda)^{\frac{(1-p)q}{p}} t^{-\frac{q}{p}}. 
\]
\end{proof}

\begin{lem}
\label{lem:4}
Let $Y,Z$ be random vectors with values in a normed space $F$ and let $p \geq 1$. 
Suppose that $\Ex\|Y\|^{p-1} \|Z\|   \leq \gamma \Ex\|Z\|^p$. Then 
\[
\Ex\|Y+Z\|^p \geq \Ex \|Y\|^p + \left( \frac{1}{3^p}-2p \gamma \right) \Ex\|Z\|^p.
\]    
\end{lem}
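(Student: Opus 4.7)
The plan is to establish two complementary pointwise lower bounds for $\|Y+Z\|^p$, combine them via a splitting into a convenient event and its complement, and then close the argument using the hypothesis $\Ex\|Y\|^{p-1}\|Z\|\leq \gamma\Ex\|Z\|^p$.

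The first, universal bound comes from the convexity of $x\mapsto x^p$ on $[0,\infty)$ for $p\geq 1$: its tangent line at the point $\|Y\|$ yields
\[
\|Y+Z\|^p \geq \|Y\|^p + p\|Y\|^{p-1}\bigl(\|Y+Z\|-\|Y\|\bigr),
\]
and the reverse triangle inequality $\|Y+Z\|-\|Y\|\geq -\|Z\|$ gives $\|Y+Z\|^p\geq \|Y\|^p - p\|Y\|^{p-1}\|Z\|$. Integrating and using only this would produce $\Ex\|Y+Z\|^p\geq \Ex\|Y\|^p - p\gamma\Ex\|Z\|^p$, which is weaker than the claim by the term $\tfrac{1}{3^p}\Ex\|Z\|^p$.

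To recover this term I would restrict attention to the event $A=\{\|Y\|\leq \|Z\|/3\}$, on which the triangle inequality gives the stronger estimate $\|Y+Z\|\geq \tfrac{2}{3}\|Z\|$; combined with $\|Y\|^p\leq \|Z\|^p/3^p$ on $A$ and the elementary fact $2^p\geq 2$ for $p\geq 1$, this produces the pointwise improvement
\[
\|Y+Z\|^p\geq \|Y\|^p+\tfrac{1}{3^p}\|Z\|^p\quad\text{on }A.
\]
Adding this to the universal bound applied on $A^c$ gives
\[
\Ex\|Y+Z\|^p\geq \Ex\|Y\|^p+\tfrac{1}{3^p}\Ex\|Z\|^p\ind_A - p\Ex\|Y\|^{p-1}\|Z\|\ind_{A^c}.
\]
To remove the indicator from the first correction term, observe that on $A^c$ one has $\|Z\|<3\|Y\|$, hence $\|Z\|^p\leq 3^{p-1}\|Y\|^{p-1}\|Z\|$; integrating yields $\Ex\|Z\|^p\ind_{A^c}\leq 3^{p-1}\gamma\Ex\|Z\|^p$, and therefore $\Ex\|Z\|^p\ind_A\geq(1-3^{p-1}\gamma)\Ex\|Z\|^p$. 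Combining with $\Ex\|Y\|^{p-1}\|Z\|\ind_{A^c}\leq \gamma\Ex\|Z\|^p$ produces a correction of $\bigl(\tfrac{1}{3^p}-(p+\tfrac{1}{3})\gamma\bigr)\Ex\|Z\|^p$, and the conclusion follows from the trivial inequality $p+\tfrac{1}{3}\leq 2p$ valid for $p\geq 1$.

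The main delicacy is the selection of the threshold $\|Z\|/3$ defining $A$: enlarging it would shrink the pointwise gain $(2/3)^p-1/3^p$ on $A$, while shrinking it would enlarge $A^c$ and thereby inflate the error term $\Ex\|Z\|^p\ind_{A^c}$. The value $1/3$ is essentially the smallest choice that still produces the clean factor $1/3^p$ in the conclusion while keeping the remainder comparable to $p\gamma\Ex\|Z\|^p$ after a single application of the hypothesis.
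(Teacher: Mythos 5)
Your proof is correct and follows essentially the same route as the paper's: both split on the event $\{\|Y\|\leq\|Z\|/3\}$, use the tangent-line bound $\|Y+Z\|^p\geq\|Y\|^p-p\|Y\|^{p-1}\|Z\|$ off that event and the improved pointwise bound $\|Y+Z\|^p\geq\|Y\|^p+3^{-p}\|Z\|^p$ on it, and then absorb the error on the complement using $\|Z\|^p\leq 3^{p-1}\|Y\|^{p-1}\|Z\|$ there together with the hypothesis, arriving at the same constant $p+\tfrac13\leq 2p$.
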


\begin{proof}
For any real numbers $a,b$ we have $|a+b|^p \geq |a|^p -p|a|^{p-1}|b|$. If, additionally, $|a| \leq \frac{1}{3}|b|$,
then $|a+b|^p \geq |a|^p + \frac{1}{3^p} |b|^p$. Taking $a= \|Y\|$, $b=-\|Z\|$ and using the inequality 
$\|Y+Z\| \geq |\|Y\|-\|Z\||$ we obtain
\begin{align*}
\Ex\|Y+Z\|^p 
& = \Ex\|Y+Z\|^p \ind_{\{\|Y\| \leq \frac{1}{3}\|Z\|\}} + \Ex\|Y+Z\|^p \ind_{\{\|Y\| > \frac{1}{3} \|Z\|\}} 
\\
& \geq  \Ex\|Y\|^p \ind_{\{\|Y\| \leq \frac{1}{3} \|Z\|\}}+ \frac{1}{3^p}\Ex\|Z\|^p\ind_{\{\|Y\|\leq\frac{1}{3}\|Z\|\}}
\\ 
&\phantom{aa}+ \Ex\|Y\|^p\ind_{\{\|Y\|>\frac{1}{3} \|Z\|\}} - p\Ex\|Y\|^{p-1}\|Z\|\ind_{\{\|Y\|>\frac{1}{3}\|Z\|\}} 
\\
& = \Ex\|Y\|^p + \frac{1}{3^p}\Ex\|Z\|^p(1-\ind_{\{\|Y\| > \frac{1}{3} \|Z\|\}})
- p \Ex\|Y\|^{p-1} \|Z\| \ind_{\{\|Y\| > \frac{1}{3} \|Z\|\}}.
\end{align*}
Note that
\[
\Ex\left(\frac{1}{3^p}\|Z\|^p + p \|Y\|^{p-1} \|Z\|\right) \ind_{\{\|Y\| > \frac{1}{3} \|Z\|\}} 
\leq \left(\frac{1}{3} + p \right) \Ex\|Y\|^{p-1} \|Z\| \leq 2p  \gamma \Ex\|Z\|^p . 
\]
Therefore,
\[
\Ex\|Y+Z\|^p \geq \Ex\|Y\|^p + \frac{1}{3^p} \Ex\|Z\|^p - 2p \gamma \Ex \|Z\|^p.
\] 
\end{proof}

We are now able to state the key proposition which will easily yield the lower bound in 
Theorem \ref{thm_noniid_largep}.

\begin{prop}
\label{prop:1}
Let $p>1$ and suppose that r.v.'s $X_1,X_2,\ldots$ satisfy assumptions \eqref{ass1a}, \eqref{ass_large1} and \eqref{ass_large2}. 
Then there exist constants $\ve_0, \ve_1, C_0>0$ depending only on $p, \mu, A, q$ and $\lambda$ such that
for any vectors  $v_0,v_1,\ldots,v_n$ in a normed space $(F,\|\ \|)$ and $k\geq 1$ we have
\begin{equation}
\label{lower_largep}
\Ex \left\|\sum_{i=0}^n v_i R_i\right\|^p 
\geq \ve_0 \|v_0\|^p + \sum_{i=1}^n \left(\frac{\ve_1}{k}-c_i\right) \|v_i\|^p,
\end{equation}
where
\[
c_i = 0 \ \mbox{ for }  1 \leq i \leq k-1, \quad c_i = \Phi \sum_{j=k}^i \lambda^j  \mbox{ for} \ i \geq k \quad  
\mbox{and} \quad \Phi = C_0 \lambda^{(p-1) k}.
\]
\end{prop}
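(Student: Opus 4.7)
My plan is to proceed by induction on $n$. The base case $n \leq k-1$ combines Lemmas \ref{lem:1} and \ref{lem:3}: Lemma \ref{lem:3} yields $\Ex\|T_n\|^p \geq \frac{\mu^{2p}}{64^p}\max_{1\leq i\leq n}\|v_i\|^p$, which weakens to $\frac{\mu^{2p}}{64^p k}\sum_{i=1}^n\|v_i\|^p$; the $\|v_0\|^p$-contribution is extracted by writing $T_n = v_0 + X_1\sum_{i\geq 1}v_i R_{2,i}$ and applying Lemma \ref{lem:1} conditionally on $(X_2,\ldots,X_n)$ with $X = X_1$, $v = v_0$, $u = \sum_{i\geq 1}v_i R_{2,i}$. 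Averaging these two bounds gives the base case with $c_i = 0$ for $1 \leq i \leq k-1$, and fixes $\ve_0 \sim \mu^p/8^p$ and $\ve_1 \sim \mu^{2p}/64^p$.

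For the inductive step $n \geq k$, I would append the term $v_n R_n$ and apply Lemma \ref{lem:4} with $Y = T_{n-1}$ and $Z = v_n R_n$. Since $\Ex\|Z\|^p = \|v_n\|^p$, this yields
\[
\Ex\|T_n\|^p \geq \Ex\|T_{n-1}\|^p + \tfrac{1}{3^p}\|v_n\|^p - 2p\|v_n\|\,\Ex\|T_{n-1}\|^{p-1} R_n .
\]
Independence of $X_n$ from $(T_{n-1},R_{n-1})$ together with $\Ex|X_n|\leq\lambda$ (Jensen applied to \eqref{ass_large2}) factorizes the cross term as $2p\lambda\|v_n\|\,\Ex\|T_{n-1}\|^{p-1} R_{n-1}$. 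I would then bound $\Ex\|T_{n-1}\|^{p-1} R_{n-1}$ by combining Lemma \ref{lem:2}'s $L^q$-estimate $(\Ex\|T_{n-1}\|^q)^{1/q} \leq (1-\lambda)^{-(p-1)/p}\bigl(\sum_i \lambda^i\|v_i\|^p\bigr)^{1/p}$ with H\"older's inequality and moment-decay estimates for $R_{n-1}$, obtaining a bound of order $\lambda^{(p-1)(n-1)}$ times a suitable homogeneous function of the $\|v_i\|$'s. A Young-type inequality then splits this into a $\|v_n\|^p$-piece (absorbed into the $\tfrac{1}{3^p}$ by taking $\ve_1 \leq 1/3^p$) and per-index pieces whose accumulated contribution across all steps matches the $c_n = \Phi\sum_{j=k}^n\lambda^j$ growth, provided $C_0$ (hence $\Phi$) is chosen large enough.

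The main obstacle is bounding $\Ex\|T_{n-1}\|^{p-1} R_{n-1}$: a direct H\"older with exponents $q/(p-1)$ and $q/(q-p+1)$ requires controlling $\Ex R_{n-1}^{q/(q-p+1)}$, whose exponent strictly exceeds $p$ (since $\Ex|X_i|^p = 1$ and $(\Ex|X_i|^q)^{1/q}\leq\lambda<1$ force $q<p$), and this moment is not controlled by the hypotheses. Overcoming this will require either truncating $R_{n-1}$ at a suitable level and handling the tail via \eqref{ass_large1} with Markov's inequality, or interpolating between the $L^q$-bound of Lemma \ref{lem:2} and the crude Minkowski bound $(\Ex\|T_{n-1}\|^p)^{1/p}\leq \sum_i\|v_i\|$. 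An additional subtlety appears when $\|v_n\|$ is small, where the Lemma \ref{lem:4} bound can have a negative coefficient on $\|v_n\|^p$; one must either augment it via a Jensen-type lower bound $\Ex\|T_n\|^p \geq \|\Ex T_n\|^p$ or observe that the required increment $(\ve_1/k - c_n)\|v_n\|^p$ is then negligible compared to the slack in the inductive hypothesis.
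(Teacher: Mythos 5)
Your plan proceeds by induction appending $v_nR_n$ at the end and invoking Lemma \ref{lem:4} with $Y=T_{n-1}$, $Z=v_nR_n$. This ``peel from the back'' strategy is a genuinely different decomposition from the paper's, which conditions on $X_1$ (or on $X_1,\dots,X_k$) and pivots on $\|v_0\|$. Unfortunately the ``peel from the back'' approach has gaps that I do not see how to close.

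The essential obstruction is that Lemma \ref{lem:4} requires the cross term $\Ex\|Y\|^{p-1}\|Z\|$ to be bounded by a small multiple of $\Ex\|Z\|^p=\|v_n\|^p$, i.e.\ $\Ex\|T_{n-1}\|^{p-1}R_n\lesssim\|v_n\|^{p-1}$. The left side is a function of $v_0,\ldots,v_{n-1}$ alone and the right side a function of $v_n$ alone; when $\|v_n\|$ is small (in the extreme, $v_n=0$) there is nothing to bound by. You flag this, but neither of your proposed remedies works. The Jensen bound $\Ex\|T_n\|^p\ge\|\Ex T_n\|^p$ carries no useful information here (the mean can be arbitrarily small). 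And ``absorb into the slack in the inductive hypothesis'' is not available: the inductive hypothesis is the exact inequality of the Proposition, with fixed coefficients $\ve_0$, $\frac{\ve_1}{k}-c_i$ that do not change from step $n-1$ to step $n$; the only new term you may pay into is $(\frac{\ve_1}{k}-c_n)\|v_n\|^p$, so after Young's inequality any residue at indices $i<n$ (which inevitably appears when you expand $\Ex\|T_{n-1}\|^{p-1}R_{n-1}$ and split $\|v_n\|\|v_i\|^{p-1}\le\frac1p\|v_n\|^p+\frac{p-1}p\|v_i\|^p$) has nowhere to go.

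A secondary but equally fatal point: even if one could close the induction, the errors you accumulate are of order $\lambda^{(p-1)n}\sum_i\|v_i\|^p$ at step $n$, and after summing over $n>i$ this gives an error at index $i$ of order $\lambda^{(p-1)(i+1)}$, a constant \emph{independent of $k$}. For the Proposition to be useful one must have $c_i\le\frac{\ve_1}{2k}$, which decays in $k$; the paper achieves this because $\Phi=C_0\lambda^{(p-1)k}$ decays geometrically in $k$, which in turn comes from the good event $A_k$ (where $|R_{2,k}|$ is forced down to scale $\lambda^{k-1}$). Your scheme has no $k$-dependence in its error estimates, so the constants cannot be tuned to complete the proof of the lower bound in Theorem \ref{thm_noniid_largep}. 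Finally, you correctly note that the direct H\"older bound on $\Ex\|T_{n-1}\|^{p-1}R_{n-1}$ needs $\Ex R_{n-1}^{q/(q-p+1)}$ with exponent $>p$, which the hypotheses do not control; the paper avoids this entirely by working with the tail \emph{probability} of $\|Y'\|$ from Lemma \ref{lem:2}, never with a moment of $R$, and by conditioning on $A_k$ so that the scalar $R_k$ is bounded deterministically. What is missing from your plan, and is the heart of the paper's argument, is the dichotomy Case 1/Case 2 on whether the pivot coefficient $\|v_0\|$ is small or large relative to the weighted tail $\Phi\sum_{i\ge k}\lambda^i\|v_i\|^p$, together with conditioning at the front of the sequence so that the pivot $v_0$ is the unconditioned term and Lemma \ref{lem:1} can extract it.
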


\begin{proof}
Define
\[
\ve_0:=\min\left\{\frac{1}{4\cdot 3^p},\frac{\mu^p}{8\cdot 24^p}\right\}, \quad 
\ve_1:=\min\left\{\frac{\mu^p}{8^p},\frac{\mu^{2p}}{2^{p-1}64^p}\right\}\ve_0,
\]
where the value of $C_0$ will be chosen later. 
In the proof by $\ve_2,C_2,C_3$ we denote finite nonnegative constants that depend only on 
parameters $p,\mu.A,q$ and $\lambda$.

We fix $k\geq 1$ and prove \eqref{lower_largep} by induction on $n$. From Lemma \ref{lem:1} and Lemma \ref{lem:3} 
we obtain 
\[
\Ex \left\|\sum_{i=0}^n v_i R_i \right\|^p \geq 2\ve_0 \|v_0\|^p, \qquad 
\Ex \left\|\sum_{i=0}^n v_i R_i \right\|^p \geq  \frac{2\ve_1}{n} \sum_{i=1}^n \|v_i\|^p.
\]
Therefore for $n \leq k$ we have
\[
\Ex \left\|\sum_{i=0}^n v_i R_i\right\|^p \geq  \ve_0 \|v_0\|^p + \frac{\ve_1}{k} \sum_{i=1}^n \|v_i\|^p.
\]

Suppose that the induction assertion holds for $n \geq k$. We show it for $n+1$. To this end we consider two cases.

\medskip

\noindent 
\textbf{Case 1.} $\ve_0 \|v_0\|^p \leq \Phi \sum_{i=k}^{n+1}\lambda^i \|v_i\|^p $.

Applying the induction assumption conditionally on $X_1$ we obtain
\begin{align*}
\Ex \left\|\sum_{i=0}^{n+1} v_i R_i\right\|^p 
& \geq \ve_0 \Ex\|v_0+v_1X_1\|^p + \sum_{i=2}^{n+1} \left(\frac{\ve_1}{k}-c_{i-1}\right) \Ex \|X_1 v_i\|^p 
\\
& \geq \frac{\ve_1}{k} \|v_1\|^p + \sum_{i=2}^{n+1} \left(\frac{\ve_1}{k}-c_{i-1}\right)\|v_i\|^p 
\\ 
& \geq  \ve_0 \|v_0\|^p - \Phi \sum_{i=k}^{n+1}\lambda^i \|v_i\|^p + \frac{\ve_1}{k}\|v_1\|^p 
+ \sum_{i=2}^{n+1} \left(\frac{\ve_1}{k}-c_{i-1}\right)\|v_i\|^p 
\\
& =  \ve_0 \|v_0\|^p + \sum_{i=1}^{n+1}\left(\frac{\ve_1}{k}-c_{i}\right)\|v_i\|^p,
\end{align*}
where the second inequality follows from Lemma \ref{lem:1}. 

\medskip
\noindent 
\textbf{Case 2.} $\ve_0 \|v_0\|^p > \Phi \sum_{i=k}^{n+1}\lambda^i \|v_i\|^p $.

Define the event $A_k \in \sigma(X_1,\dots,X_k)$ by
\[
A_k := \{ |X_1| \leq A, |R_{2,k}| \leq 2^{\frac{1}{q}} \lambda^{k-1} \}.
\]
By the induction assumption used conditionally on $X_1,\ldots,X_k$ we have
\begin{equation}
\label{row1}
\Ex \left\|\sum_{i=0}^{n+1} v_i R_i\right\|^p \ind_{\Omega \setminus A_k} 
\geq  \ve_0 \Ex \left\|\sum_{i=0}^{k} v_i R_i\right\|^p \ind_{\Omega \setminus A_k} 
+ \sum_{i=k+1}^{n+1} \left(\frac{\ve_1}{k}-c_{i-k}\right) \Ex \|v_i R_k\|^p \ind_{\Omega \setminus A_k}.
\end{equation}

We have by Chebyshev's inequality and \eqref{ass_large2},
\begin{equation}
\label{Rk} 
\Pr\left(|R_{2,k}| \geq 2^{\frac{1}{q}} \lambda^{k-1}\right) 
\leq  \frac{\Ex |R_{2,k}|^{q}}{2 \lambda^{(k-1)q}} \leq \frac{1}{2}. 
\end{equation}
Together with \eqref{ass_large1} it implies $\Pr(A_k)>0$.
Let  $(Y,Y',Z)$ have the same distribution as the random vector 
$(\sum_{i=k}^{n+1} v_i R_i,\sum_{i=k}^{n+1} v_i R_{k+1,i},\sum_{i=0}^{k-1} v_i R_i)$
conditioned on the event $A_k$. 
Note that
\[
\Ex \left\|\sum_{i=0}^{n+1} v_i R_i\right\|^p \ind_{ A_k}  = \Pr(A_k) \Ex \|Y+Z\|^p.
\]
 Applying Lemma \ref{lem:1} conditionally we obtain 
\begin{align}
\notag
\Ex\|Z\|^p  
&= \frac{1}{\Pr(A_k)} \Ex\left\|\sum_{i=0}^{k-1}v_i R_i\right\|^p
\ind_{\{|X_1| \leq A\}}\ind_{\{ |R_{2,k}|\leq 2^{\frac{1}{q}}\lambda^{k-1}\}} 
\\
\label{zv0}
&\geq \frac{\mu^p}{8^p} \|v_0\|^p \frac{\Pr( |R_{2,k}| \leq  2^{\frac{1}{q}} \lambda^{k-1})}{\Pr(A_k)} 
= \frac{\mu^p}{8^p} \|v_0\|^p \frac{1}{\Pr(|X_1| \leq A)} \geq \frac{\mu^p}{8^p} \|v_0\|^p .
\end{align}
Note that $Y'$ has the same distribution as $\sum_{i=k}^{n+1} v_i R_{k+1,i}$ and is independent of $Z$. 
We have for $t>0$,
\begin{align}
\notag
\Pr(\|Y\|^p \geq t \Ex \|Z\|^p )  
&\leq  \Pr\left(A^p \lambda^{p(k-1)} 2^{\frac{p}{q}} \|Y'\|^p \geq t \frac{\mu^p}{8^p} \|v_0\|^p \right) 
\\
\notag
&\leq \Pr\left(A^p\lambda^{p(k-1)} 2^{\frac{p}{q}} \|Y'\|^p \geq t \frac{\mu^p}{8^p} \frac{\Phi}{\ve_0} 
\sum_{i=k}^{n+1}\lambda^i \|v_i\|^p\right) 
\\
\label{tail}
&=\Pr\left(\|Y'\|^p \geq t C_0 \ve_2 \sum_{i=k}^{n+1}\lambda^{i-k} \|v_i\|^p\right)  
\leq C_1 (tC_0)^{-\frac{q}{p}},
\end{align}
where the last inequality follows by Lemma \ref{lem:2}
(recall that $\ve_2$ and $C_1$ denote constants depending on $p,\mu,A,q$ and $\lambda$).

In order to use Lemma \ref{lem:4} we would like to estimate $\Ex\|Y\|^{p-1}\|Z\|$. To this end take $\delta > 0$
and observe first that
\begin{align}
\notag
\Ex \|Y\|^{p-1} \|Z\|
& \leq \Ex \|Y\|^{p-1} \|Z\| \ind_{\{\|Y\|^p \leq \delta \Ex\|Z\|^p \}} 
+ \Ex \|Y\|^{p-1} \|Z\| \ind_{\{ \|Z\|^p \leq \delta \Ex \|Z\|^p \}}
\\
\label{eqq0}
&\phantom{aa}+\Ex\|Y\|^{p-1} \|Z\|  \ind_{\{ \|Y\|^p > \delta \Ex \|Z\|^p \}} \ind_{\{ \|Z\|^p > \delta \Ex \|Z\|^p \}}.
\end{align}
Clearly,
\begin{equation}
\label{eqq1}
\Ex  \|Y\|^{p-1} \|Z\| \ind_{\{ \|Y\|^p \leq \delta \Ex \|Z\|^p \}} 
\leq \delta^{\frac{p-1}{p}} (\Ex\|Z\|^p)^{\frac{p-1}{p}} \Ex \|Z\| 
\leq  \delta^{\frac{p-1}{p}} \Ex \|Z\|^p.
\end{equation}

To estimate the next term in \eqref{eqq0} note that
\[
\Ex\|Y\|^{p-1} \|Z\| \ind_{\{ \|Z\|^p  \leq \delta \Ex \|Z\|^p \}}  
\leq \delta^{1/p}  (\Ex \|Z\|^p)^{1/p}  \Ex \|Y\|^{p-1}.
\]
Using estimate \eqref{tail} we obtain
\begin{align*}
\Ex \|Y\|^{p-1} 
& =  (\Ex \|Z\|^p)^{\frac{p-1}{p}}\int_{0}^{\infty}\Pr\left(\|Y\|^{p} \geq s^{\frac{p}{p-1}} \Ex\|Z\|^p\right)\de s 
\\ 
&\leq (\Ex \|Z\|^p)^{\frac{p-1}{p}}\int_{0}^{\infty}\min\{1, C_1C_0^{-\frac{q}{p}} s^{-\frac{q}{p-1}}\}\de s  
\leq   (\Ex \|Z\|^p)^{\frac{p-1}{p}}\left(1+ C_2 C_0^{-\frac{q}{p}} \right),
\end{align*}
where the last inequality follows since $q>p-1$.
Thus, 
\begin{equation}
\label{eqq2}
\Ex \|Y\|^{p-1} \|Z\| \ind_{\{ \|Z\|^p  \leq \delta \Ex \|Z\|^p \}}  
\leq \delta^{1/p} \left(  1+ C_2 C_0^{-\frac{q}{p}}\right)  \Ex \|Z\|^p.
\end{equation}

We are left with estimating the last term in \eqref{eqq0}. We have
\begin{align*}
\Ex \|Y\|^{p-1}& \|Z\| \ind_{\{ \|Y\|^p > \delta \Ex \|Z\|^p \}} \ind_{\{ \|Z\|^p > \delta \Ex \|Z\|^p \}} 
\\ 
& = \sum_{m=0}^\infty \Ex \|Y\|^{p-1} \|Z\| \ind_{\{ 2^m \delta \Ex \|Z\|^p < \|Y\|^p \leq 2^{m+1}\delta \Ex\|Z\|^p\}}
\ind_{\{ \|Z\|^p > \delta \Ex \|Z\|^p \}} 
\\
& \leq \sum_{m=0}^\infty 2^{(m+1)\frac{p-1}{p}} \delta^{\frac{p-1}{p}}
\Ex(\Ex\|Z\|^p)^{\frac{p-1}{p}} \|Z\|\ind_{\{2^m\delta\Ex\|Z\|^p<\|Y\|^p\}}\ind_{\{\|Z\|^p>\delta \Ex \|Z\|^p \}} 
\\
& \leq \delta^{\frac{p-1}{p}} \sum_{m=0}^\infty  2^{(m+1)\frac{p-1}{p}} 
\Ex \left( \frac{\|Z\|^p}{\delta} \right)^{\frac{p-1}{p}} \|Z\| \ind_{\{ 2^m \delta \Ex \|Z\|^p < \|Y\|^p \}} 
\\
& =  \sum_{m=0}^\infty  2^{(m+1)\frac{p-1}{p}}  \Ex \|Z\|^p \ind_{\{ 2^m \delta \Ex \|Z\|^p < \|Y\|^p \}}.
\end{align*} 
Recall that $Z$ and $Y'$ are independent. Therefore as in \eqref{tail} we get
\begin{align*}
\Ex\|Z\|^p \ind_{\{ 2^m \delta \Ex \|Z\|^p < \|Y\|^p  \}} 
& \leq \Ex \|Z\|^p 
\ind_{\{ \|Y'\|^p \geq 2^m \delta C_0 \ve_2 \sum_{i=k}^{n+1}\lambda^{i-k} \|v_i\|^p \}} 
\\
& = \Ex \|Z\|^p
\Pr\left(\|Y'\|^p \geq 2^m \delta C_0 \ve_2 \sum_{i=k}^{n+1}\lambda^{i-k} \|v_i\|^p \right) 
\\
& \leq  \Ex \|Z\|^p C_1 (2^m \delta C_0)^{-\frac{q}{p}}.
\end{align*}
We arrive at
\begin{align}
\notag
\Ex \|Y\|^{p-1} \|Z\| \ind_{\{ \|Y\|^p > \delta \Ex \|Z\|^p \}} \ind_{\{ \|Z\|^p > \delta \Ex \|Z\|^p \}}  
&  \leq \Ex \|Z\|^p  C_1     (\delta C_0)^{-\frac{q}{p}}\sum_{m=0}^\infty  2^{(m+1)\frac{p-1}{p}}  2^{-\frac{mq}{p}} 
\\
\label{eqq3}
&  \leq  \Ex \|Z\|^p C_3 (\delta C_0)^{-\frac{q}{p}},
\end{align}
where we have used the fact that $q>p-1$.

Estimates \eqref{eqq0}--\eqref{eqq3} imply
\[
\Ex\|Y\|^{p-1} \|Z\|  \leq \Ex \|Z\|^p  
\left( \delta^{\frac{p-1}{p}} + \delta^{1/p}( 1+ C_2C_0^{-\frac{q}{p}}) + C_3(\delta C_0)^{-\frac{q}{p}}\right).
\]
Now we choose $\delta=\delta(p)$ sufficiently small and then $C_0=C_0(p,A,\mu.q,\lambda)$ sufficiently large 
to obtain
\begin{equation}
\label{eqq4}
\Ex\|Y\|^{p-1} \|Z\|   \leq  \frac{1}{4p 3^p}\Ex \|Z\|^p.
\end{equation}

>From Lemma \ref{lem:4} we deduce
\[
\Ex\|Y+Z\|^p \geq \Ex \|Y\|^p +  \frac{1}{2 \cdot 3^p} \Ex\|Z\|^p.
\]
Hence
\begin{equation}
\label{fineq0}
\Ex \left\| \sum_{i=0}^{n+1} v_i R_i \right\|^p \ind_{A_k} 
\geq \frac{1}{2 \cdot 3^p} \Ex \left\|\sum_{i=0}^{k-1} v_i R_i\right\|^p \ind_{A_k} 
+ \Ex \left\|\sum_{i=k}^{n+1} v_i R_i\right\|^p \ind_{A_k}.
\end{equation}
Lemma \ref{lem:1} and \eqref{Rk} yield
\[
\Ex \left\|\sum_{i=0}^{k-1} v_i R_i\right\|^p \ind_{A_k}  
\geq  \frac{\mu^p}{8^p} \|v_0\|^p \Pr(|R_{2,k}| \leq 2^{\frac{1}{q}} \lambda^{k-1}) 
\geq \frac{1}{2} \cdot  \frac{\mu^p}{8^p} \|v_0\|^p.
\]
It follows that
\begin{equation}
\label{fineq1}
\frac{1}{2 \cdot 3^p} \Ex \left\|\sum_{i=0}^{k-1} v_i R_i\right\|^p \ind_{A_k}  
\geq  \ve_0  \|v_0\|^p + \ve_0 \Ex \left\|\sum_{i=0}^{k-1} v_i R_i\right\|^p \ind_{A_k}.
\end{equation}
By the induction assumption we obtain
\begin{equation}
\label{fineq2}
\Ex \left\|\sum_{i=k}^{n+1} v_i R_i\right\|^p \ind_{A_k} \geq \ve_0 \Ex \|v_k R_k\|^p \ind_{A_k} 
+ \sum_{i=k+1}^{n+1} \left(\frac{\ve_1}{k} - c_{i-k}\right) \Ex \|v_i R_k\|^p \ind_{A_k}.
\end{equation}

Combining \eqref{fineq0}, \eqref{fineq1} and \eqref{fineq2} we get
\begin{align*}
\Ex\left\| \sum_{i=0}^{n+1} v_i R_i \right\|^p \ind_{A_k}  
&\geq \ve_0\|v_0\|^p  + \ve_0 \Ex \left\|\sum_{i=0}^{k-1} v_i R_i\right\|^p \ind_{A_k} 
+ \ve_0 \Ex \|v_k R_k\|^p \ind_{A_k} 
\\ 
&  \phantom{aa}  + \sum_{i=k+1}^{n+1} \left(\frac{\ve_1}{k} - c_{i-k}\right) \Ex \|v_i R_k\|^p \ind_{A_k} 
\\
&   \geq \ve_0 \|v_0\|^p +  \frac{\ve_0}{2^{p-1}} \Ex \left\|\sum_{i=0}^{k} v_i R_i\right\|^p \ind_{A_k}  
+\sum_{i=k+1}^{n+1} \left(\frac{\ve_1}{k} - c_{i-k}\right) \Ex \|v_i R_k\|^p \ind_{A_k}.
\end{align*}
This inequality together with \eqref{row1} and Lemma \ref{lem:3} yields
\begin{align*}
\Ex\left\| \sum_{i=0}^{n+1} v_i R_i\right\|^p 
& \geq \ve_0 \|v_0\|^p + \frac{\ve_0}{2^{p-1}} \Ex \left\|\sum_{i=0}^{k} v_i R_i\right\|^p  
+\sum_{i=k+1}^{n+1} \left(\frac{\ve_1}{k} - c_{i-k}\right) \Ex \|v_i R_k\|^p    
\\
& \geq  \ve_0 \|v_0\|^p+ \frac{\ve_1}{k} \sum_{i=1}^{k} \|v_i\|^p 
+\sum_{i=k+1}^{n+1}  \left(\frac{\ve_1}{k} - c_{i-k}\right) \|v_i\|^p 
\\
& \geq \ve_0 \|v_0\|^p  +\sum_{i=1}^{n+1} \left(\frac{\ve_1}{k} - c_{i}\right) \|v_i\|^p.
\end{align*}
\end{proof}

We are ready to prove the lower $L_p$-estimate for $p>1$.

\begin{proof}[Proof of the lower bound in Theorem \ref{thm_noniid_largep}]
For sufficiently large $k$ we have for all $i$, 
\[
c_i \leq \frac{\Phi\lambda^k}{1-\lambda} = \frac{C_0 \lambda^{p k}}{1-\lambda} \leq \frac{\ve_1}{2k}.
\]
Thus, Proposition \ref{prop:1} yields
\[
\Ex \left\| \sum_{i=0}^n v_i R_i \right\|^p \geq \ve_0 \|v_0\|^p 
+ \frac{\ve_1}{2k} \sum_{i=1}^n  \|v_i\|^p \geq \ve \sum_{i=0}^n  \|v_i\|^p,
\] 
where $\ve:=\min\{\ve_0,\frac{\ve_1}{2k}\}$.
\end{proof} 

\noindent
{\bf Remark.} Observe that $\mu\leq \Ex||X_i|-\Ex |X_i||\leq 2\Ex |X_i|\leq 2(\Ex |X_i|^p)^{1/p}=2$. This shows that
\[
\ve_0=\frac{\mu^p}{8\cdot 24^p},\quad \ve_1=\frac{\mu^{2p}}{2^{p-1}64^p}\cdot \ve_0
\quad \mbox{and} \quad \min\left\{\ve_0,\frac{\ve_1}{2k}\right\}=\frac{\mu^{3p}}{8k\cdot 2^{10p}\cdot 3^p}.
\]
Other constants used in the proof of Proposition \ref{prop:1} may be estimated as follows 
\[
\ve_2=\frac{\mu^p\lambda^p}{8^p A^p\ve_0}2^{-\frac{p}{q}}\geq \left(\frac{3\lambda}{2A}\right)^p,
\quad C_1=(1-\lambda)^{\frac{(1-p)q}{p}}\ve_2^{-\frac{q}{p}}\leq 
(1-\lambda)^{\frac{(1-p)q}{p}}\left(\frac{2A}{3\lambda}\right)^q,
\]
\[
C_2\leq \frac{p-1}{q+1-p}C_1\quad \mbox{ and }\quad C_3=\frac{2^{\frac{q}{p}}}{2^{\frac{q+1-p}{p}}-1}C_1\leq
\frac{2p}{(q+1-p)\ln 2}C_1.
\]
Hence we can for example take 
\[
\delta:=48^{-\frac{p^2}{\min\{p-1,1\}}}\quad \mbox{and} \quad
C_0:=(1-\lambda)^{1-p}\left(\frac{2A}{3\lambda}\right)^p\left(\frac{2p}{(q+1-p)\ln2}\right)^{\frac{p}{q}}
48^{\frac{2p^2}{\min\{p-1,1\}}},
\]
then each term $\delta^{(p-1)/p}$, $\delta^{1/p}$, $\delta^{1/p}C_2C_0^{-q/p}$ and
$C_3(\delta C_0)^{-q/p}$ is not greater than $48^{-p}\leq (16p3^p)^{-1}$ and \eqref{eqq4} holds.

\section{Lower bound for $p\leq 1$}
\label{sec:lowersmall}

In this section we prove the lower bound in Theorem \ref{thm_noniid_smallp}. We will also assume normalization 
\eqref{ass1a} and use similar notation as for $p>1$.

We begin with a result similar to Lemma \ref{lem:1}.

\begin{lem}
\label{lem:1s}
Let $X$ be a random variable such that $\Ex |X|^p =1$. Then for every $A>1$ and 
$u,v$ in a normed space $(F,\|\ \|)$ we have
\[
\Ex\|uX+v\|^p\geq \Ex \|uX+v\|^p \ind_{\{ |X|^p \leq A\}} \geq \delta \max \{\|u\|^p, \|v\|^p \},
\]
where
\[
\delta := \Ex(|X|^p-1)\ind_{\{ 1 \leq |X|^p \leq A \}}.
\]
\end{lem}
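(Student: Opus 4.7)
\textbf{Proof plan for Lemma \ref{lem:1s}.} The first inequality is immediate by nonnegativity of the integrand. For the second, the key is the subadditivity of $t\mapsto t^p$ on $[0,\infty)$ for $p\in(0,1]$: from $(x+y)^p\leq x^p+y^p$ applied to the standard triangle inequality one gets the \emph{reverse triangle inequality for $p$-th powers},
\[
\|a+b\|^p \geq \bigl|\|a\|^p - \|b\|^p\bigr|,\qquad a,b\in F.
\]
Applied to $a = uX$ and $b=v$, this gives pointwise
\[
\|uX+v\|^p \geq \bigl||X|^p\|u\|^p - \|v\|^p\bigr|.
\]
So the problem reduces to the scalar one: setting $\alpha=\|u\|^p$, $\beta=\|v\|^p$, $Z=|X|^p$, show
\[
\E\bigl|Z\alpha-\beta\bigr|\ind_{\{Z\leq A\}} \;\geq\; \delta\max\{\alpha,\beta\},
\]
where $\delta=\E(Z-1)\ind_{\{1\leq Z\leq A\}}$ and $\E Z=1$.

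I would handle this by splitting on whether $\alpha\geq\beta$ or $\beta>\alpha$, always choosing the half of the integration region where the sign of $Z\alpha-\beta$ is easy to control. If $\alpha\geq\beta$, then on $\{1\leq Z\leq A\}$ we have $Z\alpha\geq\alpha\geq\beta$, so $|Z\alpha-\beta|\geq Z\alpha-\alpha=(Z-1)\alpha$, and integrating gives the desired bound $\delta\alpha$. If $\beta>\alpha$, then on $\{Z\leq 1\}\subset\{Z\leq A\}$ (using $A>1$) we have $Z\alpha\leq Z\beta\leq\beta$, hence $|Z\alpha-\beta|\geq\beta-Z\beta=(1-Z)\beta$. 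Taking expectations,
\[
\E|Z\alpha-\beta|\ind_{\{Z\leq A\}}\geq \beta\,\E(1-Z)_+ = \beta\,\E(Z-1)_-.
\]

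It remains to note that $\E(Z-1)_- \geq \delta$. Indeed, since $\E Z=1$, $\E(Z-1)_-=\E(Z-1)_+$, and
\[
\E(Z-1)_+ \geq \E(Z-1)\ind_{\{1\leq Z\leq A\}} = \delta.
\]
Combining the two cases yields the claimed inequality. No step is a real obstacle: the whole argument rests on the reverse triangle inequality for $p$-th powers (valid precisely because $p\leq 1$) together with a one-line case split; no delicate estimate or induction is needed.
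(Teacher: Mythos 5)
Your proof is correct and follows essentially the same route as the paper's: both reduce via the triangle inequality and the subadditivity of $t\mapsto t^p$ to a scalar estimate, split into the same two cases ($\|u\|\geq\|v\|$ versus $\|v\|>\|u\|$), restrict to $\{1\leq |X|^p\leq A\}$ in the first case and to $\{|X|^p\leq 1\}$ in the second, and use $\E(|X|^p-1)_-=\E(|X|^p-1)_+\geq\delta$ exactly as in the paper's display \eqref{eq:1s1}. The only cosmetic difference is that you apply the $p$-th-power subadditivity once at the outset to get $\bigl||X|^p\|u\|^p-\|v\|^p\bigr|$, whereas the paper keeps $\bigl|\|u\||X|-\|v\|\bigr|^p$ and pulls the power apart inside each case.
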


\begin{proof}
Since $\Ex |X|^p=1$ we have
\begin{equation}
\label{eq:1s1}
\delta
\leq \Ex(|X|^p-1)\ind_{\{ 1 \leq |X|^p  \}} = \Ex(1-|X|^p)\ind_{\{ |X|^p \leq 1 \}}
\leq \Pr(|X|^p \leq 1) \leq \Pr (|X|^p \leq A).
\end{equation}
The triangle inequality yields $\|uX+v\| \geq \big|\|u\||X|-\|v\|\big|$. Thus, it suffices to prove
\begin{equation}
\label{eq:1s2}
\Ex \big|\|u\||X| - \|v\|\big|^p \ind_{\{ |X|^p \leq A\}}  \geq \delta \max \{\|u\|^p, \|v\|^p \}.
\end{equation}
If $u=0$ then this inequality is satisfied due to \eqref{eq:1s1}. In the case $u \neq 0$ 
divide both sides of \eqref{eq:1s2} by $\|u\|^p$ to see that it is enough to show 
\[
\Ex||X| - t|^p  \ind_{\{ |X|^p \leq A\}}  \geq \delta \max \{t^p, 1 \} \quad \mbox{ for } t\geq 0.
\]
To prove this inequality let us consider two cases. First assume that $t \in [0,1]$. Then we have
\begin{align*}
\Ex | |X| - t |^p  \ind_{\{ |X|^p \leq A\}} 
& \geq \Ex | |X| - t |^p  \ind_{\{ 1 \leq |X|^p \leq A\}} \geq \Ex(|X|^p-t^p) \ind_{\{ 1 \leq |X|^p \leq A\}} 
\\
& \geq \Ex(|X|^p-1) \ind_{\{ 1 \leq |X|^p \leq A\}} = \delta=\delta  \max \{t^p, 1 \}. 
\end{align*}

In the case $t>1$ it suffices to note that
\begin{align*}
\Ex | |X| - t |^p  \ind_{\{ |X|^p \leq A\}} 
& \geq \Ex | |X| - t |^p  \ind_{\{ |X|^p \leq 1\}} \geq  \Ex (t^p-|X|^p)  \ind_{\{ |X|^p \leq 1\}} 
\\
& \geq  t^p\Ex ( 1-|X|^p  )  \ind_{\{ |X|^p \leq 1\}} \geq \delta t^p
=\delta  \max \{t^p, 1 \},
\end{align*}
where the last inequality follows from \eqref{eq:1s1}.
\end{proof}

As a consequence, in the same way as in Lemma \ref{lem:3}, we derive the following estimate.

\begin{lem}
\label{lem:3s}
Let r.v.'s $X_1,X_2,\ldots$ satisfy \eqref{ass1a} and \eqref{ass_small2}.  Then for any vectors
$v_0, v_1,\ldots, v_n \in F$ we get 
\[
\Ex \left\|\sum_{i=0}^n v_i R_i\right\|^p \geq \delta^2 \max_{1 \leq i \leq n} \|v_i\|^p 
\geq \frac{\delta^2}{n} \sum_{i=1}^n \|v_i\|^p.
\]
\end{lem}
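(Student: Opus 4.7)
The plan is to mirror the two-step conditional argument of Lemma \ref{lem:3}, swapping Lemma \ref{lem:1} for its $p\leq 1$ analog Lemma \ref{lem:1s}. Under the normalization $\Ex|X_i|^p=1$ from \eqref{ass1a}, hypothesis \eqref{ass_small2} reduces to $\Ex(|X_i|^p-1)\ind_{\{1\leq |X_i|^p\leq A\}}\geq \delta$, which is exactly the quantity that plays the role of $\delta$ in Lemma \ref{lem:1s}. Hence each $X_i$ satisfies the hypothesis of that lemma with constant at least $\delta$, and the lemma is available at every coordinate.

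Fix $1\leq j\leq n-1$. Using the same algebraic identity as in Lemma \ref{lem:3}, I would decompose
\[
\sum_{i=0}^n v_i R_i = Y + X_j\bigl(v_j R_{j-1} + X_{j+1}Z\bigr),
\]
where $Y=\sum_{i=0}^{j-1}v_i R_i$ and $Z=R_{j-1}\sum_{i=j+1}^n v_i R_{j+2,i}$ are independent of $(X_j,X_{j+1})$. First, conditioning on $\sigma(X_k:k\neq j)$ and applying Lemma \ref{lem:1s} with $X=X_j$, $u=v_j R_{j-1}+X_{j+1}Z$, $v=Y$, and keeping only the $\|u\|^p$ term, pulls out a factor $\delta$. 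Second, conditioning on $\sigma(X_k:k\neq j+1)$ and applying Lemma \ref{lem:1s} again with $X=X_{j+1}$, $u=Z$, $v=v_j R_{j-1}$, this time keeping the $\|v\|^p$ term, pulls out another factor $\delta$. Taking the outer expectation and using $\Ex|R_{j-1}|^p=\prod_{k<j}\Ex|X_k|^p=1$ gives $\Ex\|\sum_i v_iR_i\|^p\geq \delta^2\|v_j\|^p$.

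The boundary index $j=n$ needs only one application of Lemma \ref{lem:1s} to the simpler decomposition $Y+X_n\cdot v_nR_{n-1}$, yielding the stronger bound $\delta\|v_n\|^p\geq \delta^2\|v_n\|^p$ (using $\delta\leq \Ex|X_i|^p=1$). Maximizing over $j\in\{1,\dots,n\}$ proves the first inequality, and the elementary $\max_j\|v_j\|^p\geq \frac{1}{n}\sum_{j=1}^n\|v_j\|^p$ gives the second. I expect no real obstacle: Lemma \ref{lem:1s} does all the work, the algebraic decomposition is copied verbatim from Lemma \ref{lem:3}, and the only sanity check needed is verifying $\delta\leq 1$, which is immediate from $\Ex|X_i|^p=1$.
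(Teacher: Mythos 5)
Your proposal is correct and is exactly the paper's argument: the paper derives Lemma \ref{lem:3s} "in the same way as in Lemma \ref{lem:3}", i.e.\ via the decomposition $\sum_{i=0}^n v_iR_i=Y+X_j(v_jR_{j-1}+X_{j+1}Z)$ and two conditional applications of Lemma \ref{lem:1s}, whose constant is at least $\delta$ for each $X_i$ under the normalization \eqref{ass1a}. Your separate (single-application) treatment of $j=n$ and the check $\delta\leq 1$ are fine and do not change the substance.
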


\begin{lem}
\label{lem:2s}
Suppose that random variables $X_1, X_2,\ldots$ satisfy assumptions \eqref{ass1a} and \eqref{ass_small1}. 
Then for all vectors $v_1, v_2,\ldots$ in $(F,\|\ \|)$ we have
\[
\Pr\left(\left\| \sum_{i=0}^n  v_i R_i \right\|^p  \geq \frac{t}{1-\lambda} \sum_{i=0}^n \lambda^i \|v_i\|^p \right)
\leq \frac{1}{\sqrt{t}} \quad \mbox{for } t>0. 
\]
\end{lem}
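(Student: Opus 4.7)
The plan is to mimic Lemma \ref{lem:2}, but in the regime $p\leq 1$ use $p/2$-moments and the $s$-concavity of $\|\cdot\|^s$ for $s\leq 1$ in place of Minkowski's inequality. Concretely, first I would pass to $\|\cdot\|^{p/2}$: since $p/2\leq 1/2<1$, the elementary inequality $(a+b)^{p/2}\leq a^{p/2}+b^{p/2}$ gives
\[
\left\|\sum_{i=0}^n v_iR_i\right\|^{p/2}\leq \sum_{i=0}^n \|v_i\|^{p/2}|R_i|^{p/2},
\]
and because the $X_j$ are independent, $\Ex|R_i|^{p/2}=\prod_{j=1}^i\Ex|X_j|^{p/2}\leq \lambda^i$ by assumption \eqref{ass_small1} combined with the normalization $\Ex|X_j|^p=1$.

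Next I would apply Cauchy--Schwarz to turn the weight $\lambda^i$ into $\lambda^i$ inside the $p$-th moment sum and a geometric factor: writing $\lambda^i=\lambda^{i/2}\cdot\lambda^{i/2}$,
\[
\sum_{i=0}^n \|v_i\|^{p/2}\lambda^i
= \sum_{i=0}^n (\|v_i\|^p\lambda^i)^{1/2}\lambda^{i/2}
\leq \left(\sum_{i=0}^n \|v_i\|^p\lambda^i\right)^{1/2}\left(\sum_{i=0}^n \lambda^i\right)^{1/2},
\]
and the last factor is bounded by $(1-\lambda)^{-1/2}$. Combining,
\[
\Ex\left\|\sum_{i=0}^n v_iR_i\right\|^{p/2}\leq (1-\lambda)^{-1/2}\left(\sum_{i=0}^n \|v_i\|^p\lambda^i\right)^{1/2}.
\]

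Finally, Chebyshev applied to the nonnegative random variable $\|\sum v_iR_i\|^{p/2}$ at level $\sqrt{s}$, with $s:=\frac{t}{1-\lambda}\sum_{i=0}^n\lambda^i\|v_i\|^p$, gives
\[
\Pr\!\left(\left\|\sum_{i=0}^n v_iR_i\right\|^{p}\geq s\right)
\leq \frac{1}{\sqrt{s}}\,\Ex\left\|\sum_{i=0}^n v_iR_i\right\|^{p/2}\leq \frac{1}{\sqrt{t}},
\]
after the $(1-\lambda)^{-1/2}$ and the $\bigl(\sum\lambda^i\|v_i\|^p\bigr)^{1/2}$ factors cancel. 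There is no real obstacle here; the only point that requires a little care is choosing $p/2$ as the working exponent (so that the $p$-concavity inequality is available and Chebyshev gives the $t^{-1/2}$ rate), together with the Cauchy--Schwarz trick to produce the correct constant $(1-\lambda)^{-1}$ matching the statement.
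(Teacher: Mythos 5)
Your proof is correct and follows essentially the same route as the paper: pass to the $p/2$-moment via subadditivity, bound $\Ex|R_i|^{p/2}\leq\lambda^i$ from \eqref{ass_small1}, apply Cauchy--Schwarz to compare $\sum\lambda^i\|v_i\|^{p/2}$ with $(1-\lambda)^{-1/2}(\sum\lambda^i\|v_i\|^p)^{1/2}$, and finish with Markov's inequality. The only cosmetic difference is that the paper inserts the Cauchy--Schwarz step into the event threshold before applying Markov, whereas you apply Markov first and cancel the factors afterward; the two organizations are equivalent.
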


\begin{proof}
Note that
\[
\Ex \left\|\sum_{i=0}^n v_i R_i\right\|^{p/2} \leq \sum_{i=0}^n \|v_i\|^{p/2} \Ex |R_i|^{p/2} 
\leq  \sum_{i=0}^n \lambda^i\|v_i\|^{p/2}.
\]	
By the Cauchy-Schwarz inequality we get
\[
\left(  \sum_{i=0}^n  \lambda^i \|v_i\|^{p/2} \right)^2 \leq \sum_{i=0}^n \lambda^i \sum_{i=0}^n \lambda^i \|v_i\|^p
\leq  \frac{1}{1-\lambda}  \sum_{i=0}^n \lambda^i \|v_i\|^p .
\]  
Thus, using Chebyshev's inequality we arrive at
\begin{align*}
\Pr \left(\left\|\sum_{i=0}^n  v_i R_i\right\|^p  \geq \frac{t}{1-\lambda} \sum_{i=0}^n \lambda^i \|v_i\|^p \right)  
&\leq  \Pr\left( \left\|\sum_{i=0}^n  v_i R_i\right\|^{p/2}  \geq \sqrt{t}\sum_{i=0}^n  \lambda^i \|v_i\|^{p/2} \right)
\\
& \leq  \left( \sqrt{t} \sum_{i=0}^n \lambda^i \|v_i\|^{p/2}  \right)^{-1} 
\Ex \left\|\sum_{i=0}^n v_i R_i\right\|^{p/2} \leq \frac{1}{\sqrt{t}}. 
\end{align*}
\end{proof}

Our next lemma is in the spirit of Lemma \ref{lem:4}, but it has a simpler proof.

\begin{lem}
\label{lem:4s}
Let $Y, Z$ be random vectors with values in a normed space $(F,\|\ \|)$ such that 
\[
\Ex \|Z\|^p \ind_{\{ \|Y\|^p \geq \frac{1}{8} \Ex \|Z\|^p  \}} \leq \frac{1}{8} \Ex \|Z\|^p.  
\]
Then
\[
\Ex\|Y+Z\|^p \geq \Ex \|Y\|^p + \frac{1}{2} \Ex\|Z\|^p.
\]
\end{lem}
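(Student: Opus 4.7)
The plan is to exploit the subadditivity of $t\mapsto t^p$ on $[0,\infty)$ (valid since $p\leq 1$), which gives the two-sided pointwise estimates
\[
\|Y+Z\|^p \geq \|Y\|^p-\|Z\|^p \quad\text{and}\quad \|Y+Z\|^p \geq \|Z\|^p-\|Y\|^p.
\]
Indeed, for $a,b\geq 0$ concavity of $t^p$ with $0^p=0$ yields $(a+b)^p\leq a^p+b^p$, so $\bigl|\|Y\|-\|Z\|\bigr|^p \geq \bigl|\,\|Y\|^p-\|Z\|^p\,\bigr|$, and then $\|Y+Z\|^p\geq \bigl|\|Y\|-\|Z\|\bigr|^p$ by the reverse triangle inequality.

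Next I would split expectation according to the event
\[
A:=\bigl\{\|Y\|^p \geq \tfrac{1}{8}\Ex\|Z\|^p\bigr\},
\]
which is precisely the event controlled by hypothesis. On $A$ use $\|Y+Z\|^p\geq \|Y\|^p-\|Z\|^p$ to get
\[
\Ex\|Y+Z\|^p\ind_A \geq \Ex\|Y\|^p\ind_A - \Ex\|Z\|^p\ind_A \geq \Ex\|Y\|^p\ind_A - \tfrac{1}{8}\Ex\|Z\|^p,
\]
where the second inequality is exactly the assumption of the lemma. On $A^c$ use $\|Y+Z\|^p\geq \|Z\|^p-\|Y\|^p$ and the pointwise bound $\|Y\|^p< \tfrac{1}{8}\Ex\|Z\|^p$ valid on $A^c$:
\[
\Ex\|Y+Z\|^p\ind_{A^c} \geq \Ex\|Z\|^p\ind_{A^c} - \tfrac{1}{8}\Ex\|Z\|^p\Pr(A^c) \geq \tfrac{7}{8}\Ex\|Z\|^p - \tfrac{1}{8}\Ex\|Z\|^p,
\]
where in the first term we used $\Ex\|Z\|^p\ind_{A^c} = \Ex\|Z\|^p - \Ex\|Z\|^p\ind_A \geq \tfrac{7}{8}\Ex\|Z\|^p$.

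Adding the two bounds gives $\Ex\|Y+Z\|^p \geq \Ex\|Y\|^p\ind_A + \tfrac{5}{8}\Ex\|Z\|^p$. Finally, to recover the full $\Ex\|Y\|^p$ on the right-hand side, subtract and add $\Ex\|Y\|^p\ind_{A^c}$, noting that on $A^c$ we have $\|Y\|^p\leq \tfrac{1}{8}\Ex\|Z\|^p$ so $\Ex\|Y\|^p\ind_{A^c}\leq \tfrac{1}{8}\Ex\|Z\|^p$; this leaves $\Ex\|Y\|^p + (\tfrac{5}{8}-\tfrac{1}{8})\Ex\|Z\|^p = \Ex\|Y\|^p+\tfrac{1}{2}\Ex\|Z\|^p$, as required. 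There is no real obstacle here: the argument is a direct pointwise bookkeeping, and the subadditivity-based bounds are tight enough that the $\tfrac{1}{8}$ threshold in the hypothesis matches exactly the needed $\tfrac{1}{2}$ in the conclusion.
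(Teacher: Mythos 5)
Your proof is correct and uses the same key ingredients as the paper: the pointwise inequality $\|Y+Z\|^p\geq\big|\|Y\|-\|Z\|\big|^p\geq\big|\|Y\|^p-\|Z\|^p\big|$ (from subadditivity of $t\mapsto t^p$ with $p\le1$) and the split on the event $\{\|Y\|^p\geq\tfrac18\Ex\|Z\|^p\}$, with the hypothesis controlling $\Ex\|Z\|^p\ind_A$ and the pointwise bound controlling $\Ex\|Y\|^p\ind_{A^c}$. The paper's version is just a slightly tidier arrangement of the same arithmetic, writing each piece as $\|Y\|^p+\|Z\|^p-2(\cdot)$ so the totals $\Ex\|Y\|^p+\Ex\|Z\|^p$ appear at once; both computations give exactly the factor $\tfrac12$.
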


\begin{proof}
For any $u,v\in F$ we have $\|u+v\|^p\geq \big|\|u\|-\|v\|\big|^p\geq \|u\|^p-\|v\|^p$, therefore
\begin{align*}
\Ex \|Y+Z\|^p 
& \geq \Ex( \|Y\|^p + \|Z\|^p - 2 \|Z\|^p )\ind_{\{ \|Y\|^p  \geq \frac{1}{8} \Ex \|Z\|^p  \}} 
\\ 
& \phantom{aa} + \Ex( \|Y\|^p + \|Z\|^p - 2 \|Y\|^p )\ind_{\{ \|Y\|^p < \frac{1}{8} \Ex \|Z\|^p  \}} 
\\
& \geq  \Ex \|Y\|^p +  \Ex \|Z\|^p -  2\Ex  \|Z\|^p \ind_{\{ \|Y\|^p \geq \frac{1}{8} \Ex \|Z\|^p  \}}-
2\Ex  \|Y\|^p \ind_{\{ \|Y\|^p < \frac{1}{8} \Ex \|Z\|^p  \}}   
\\
& \geq   \Ex \|Y\|^p +  \Ex \|Z\|^p - 2 \cdot \frac{1}{8} \Ex \|Z\|^p -  2 \cdot \frac{1}{8} \Ex \|Z\|^p
=\Ex \|Y\|^p +  \frac{1}{2} \Ex \|Z\|^p.
\end{align*}
\end{proof}

The proof of the lower bound for $p\leq 1$ is similar to the proof for $p>1$ and it relies on a proposition 
similar to Proposition \ref{prop:1}.

\begin{prop}
\label{prop:2}
Let $0<p\leq 1$ and suppose that r.v.'s $X_1,X_2,\ldots$ satisfy assumptions \eqref{ass1a}, \eqref{ass_small1} 
and \eqref{ass_small2}.
Then for any vectors $v_0, v_1,\ldots, v_n$ in a normed space $(F,\|\ \|)$ and any integer $k\geq 1$ we have
\[
\Ex \left\|\sum_{i=0}^n v_i R_i \right\|^p 
\geq  \ve_0 \|v_0\|^p + \sum_{i=1}^n \left(\frac{\ve_1}{k}-c_i\right)\|v_i\|^p,
\]	
where $\ve_0=\delta/8$, $\ve_1 = \delta^3/8$ and
\[
c_i = 0 \mbox{ for }  1 \leq i \leq k-1, \quad c_i = \Phi \sum_{j=k}^i \lambda^j \mbox{ for }  i \geq k
\quad \mbox{and} \quad \Phi = \frac{2^{8} A}{1-\lambda} \lambda^{k-2}.
\]
\end{prop}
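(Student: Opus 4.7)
The proof mirrors that of Proposition~\ref{prop:1}, substituting Lemmas~\ref{lem:1s}, \ref{lem:2s}, \ref{lem:4s} for Lemmas~\ref{lem:1}, \ref{lem:2}, \ref{lem:4}. Several constants simplify because for $p \leq 1$ the inequality $\|a+b\|^p \leq \|a\|^p + \|b\|^p$ eliminates both the $2^{p-1}$ loss in recombining sums and the $1/3^p$ factor arising from Lemma~\ref{lem:4} (so $1/(2\cdot 3^p)$ gets replaced by $1/2$). Fix $k \geq 1$ and induct on $n$. For the base case $n \leq k$, Lemma~\ref{lem:1s} applied conditionally on $X_2, \ldots, X_n$ to $v_0 + X_1 \sum_{i=1}^n v_i R_{2,i}$ gives $\Ex\|\sum v_i R_i\|^p \geq \delta\|v_0\|^p = 8\ve_0\|v_0\|^p$, while Lemma~\ref{lem:3s} gives $\Ex\|\sum v_i R_i\|^p \geq (\delta^2/n)\sum_{i=1}^n \|v_i\|^p \geq (2\ve_1/k)\sum_{i=1}^n \|v_i\|^p$; averaging the two bounds is the claim.

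In the inductive step $n \to n+1$ with $n \geq k$, split into Case~1 ($\ve_0\|v_0\|^p \leq \Phi \sum_{i=k}^{n+1} \lambda^i \|v_i\|^p$) and Case~2 (reverse). In Case~1, condition on $X_1$ and apply the induction hypothesis to the length-$n$ chain with coefficients $v_0 + X_1 v_1, X_1 v_2, \ldots, X_1 v_{n+1}$; Lemma~\ref{lem:1s} yields $\ve_0 \Ex\|v_0 + X_1 v_1\|^p \geq \ve_0 \delta \|v_1\|^p \geq (\ve_1/k)\|v_1\|^p$ (since $\delta \leq 1$ and $k \geq 1$ force $\ve_0\delta = \delta^2/8 \geq \delta^3/(8k) = \ve_1/k$). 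Adding the non-positive slack $\ve_0\|v_0\|^p - \Phi\sum_{i=k}^{n+1}\lambda^i\|v_i\|^p$ permitted by Case~1 and collapsing via $c_i - c_{i-1} = \Phi \lambda^i$ for $i \geq k$ reproduces the target inequality verbatim as in the corresponding step of Proposition~\ref{prop:1}.

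For Case~2, set $A_k := \{|X_1| \leq A\} \cap \{|R_{2,k}|^p \leq 4\lambda^{2(k-1)}\}$, where the first factor has positive probability by \eqref{ass_small2} and the second has probability $\geq 1/2$ by Chebyshev applied to $|R_{2,k}|^{p/2}$ using \eqref{ass_small1}. On $A_k^c$ use the induction hypothesis conditionally on $X_1, \ldots, X_k$ exactly as in \eqref{row1}. On $A_k$, set $Z := \sum_{i=0}^{k-1} v_i R_i$, $Y' := \sum_{i=k}^{n+1} v_i R_{k+1,i}$, and $Y := X_1 R_{2,k} Y' = \sum_{i=k}^{n+1} v_i R_i$; then $Y+Z$ equals the full sum and $Y'$ is independent of $(Z, A_k)$. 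Lemma~\ref{lem:1s} conditionally on $(X_2, \ldots, X_{k-1})$ as in \eqref{zv0} gives $\Ex[\|Z\|^p \mid A_k] \geq \delta\|v_0\|^p$. The deterministic bound $\|Y\|^p \leq 4 A^p \lambda^{2(k-1)} \|Y'\|^p$ on $A_k$, combined with Lemma~\ref{lem:2s} at threshold $s = 64$ and the Case~2 hypothesis, forces $\Pr(\|Y\|^p \geq \Ex[\|Z\|^p \mid A_k]/8 \mid A_k) \leq 1/8$ as soon as $\Phi \geq 2^8 A^p \lambda^{k-2}/(1-\lambda)$; the stated $\Phi = 2^8 A \lambda^{k-2}/(1-\lambda)$ works because $A^p \leq A$ when $p \leq 1$. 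Independence of $Y'$ from $(Z, A_k)$ then verifies the hypothesis of Lemma~\ref{lem:4s}, giving $\Ex[\|Y+Z\|^p \mid A_k] \geq \Ex[\|Y\|^p \mid A_k] + \tfrac{1}{2}\Ex[\|Z\|^p \mid A_k]$. Splitting $\tfrac{1}{2}\Ex\|Z\|^p \ind_{A_k} \geq \ve_0\|v_0\|^p + \ve_0 \Ex\|Z\|^p \ind_{A_k}$ (a direct check from $\Ex\|Z\|^p\ind_{A_k} \geq \delta\|v_0\|^p/2$ and $\delta \leq 1$), applying the induction hypothesis to $\Ex\|Y\|^p \ind_{A_k}$, combining with the $A_k^c$ contribution via $\|a\|^p + \|b\|^p \geq \|a+b\|^p$ to merge $\ve_0 \Ex\|\sum_{i=0}^{k-1} v_i R_i\|^p + \ve_0 \Ex\|v_k R_k\|^p$ into $\ve_0 \Ex\|\sum_{i=0}^k v_i R_i\|^p$, and invoking Lemma~\ref{lem:3s} (to bound the latter by $(\ve_1/k)\sum_{i=1}^k \|v_i\|^p$) closes the induction. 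The main obstacle is the tail control of $\|Y\|^p$ in Case~2: since only $p/2$-moments are assumed, Lemma~\ref{lem:2s} delivers only $1/\sqrt{s}$ decay (as opposed to $s^{-q/p}$ in Lemma~\ref{lem:2}), so the deterministic truncation $|R_{2,k}|^p \leq 4\lambda^{2(k-1)}$ enforced on $A_k$ must carry most of the weight, which is why $\Phi$ has the factor $\lambda^{k-2}$ rather than $\lambda^{(p-1)k}$.
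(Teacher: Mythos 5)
Your proposal reproduces the paper's inductive scheme faithfully: the base case via Lemmas \ref{lem:1s} and \ref{lem:3s}, the two cases split on the size of $\|v_0\|^p$ against $\Phi\sum_{i\geq k}\lambda^i\|v_i\|^p$, the truncation event $A_k$, the use of Lemmas \ref{lem:2s} and \ref{lem:4s} to compare the head $Z$ with the tail $Y$ on $A_k$, and the final assembly via \eqref{row1s}--\eqref{fineq2s}. The simplifications you note for $p\leq 1$ (subadditivity of $t\mapsto t^p$ replacing the $1/(2\cdot 3^p)$ and $2^{p-1}$ corrections) are exactly what the paper uses.

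There is one concrete error. You define $A_k := \{|X_1|\leq A\}\cap\{|R_{2,k}|^p\leq 4\lambda^{2(k-1)}\}$, whereas the correct truncation is $\{|X_1|^p\leq A\}$, matching the statement of Lemma \ref{lem:1s} (whose conclusion bounds $\Ex\|uX+v\|^p\ind_{\{|X|^p\leq A\}}$, not $\Ex\|uX+v\|^p\ind_{\{|X|\leq A\}}$). For $p<1$ and $A>1$ the set $\{|X_1|\leq A\}$ is a \emph{strict subset} of $\{|X_1|^p\leq A\}$, so the step in which you invoke Lemma \ref{lem:1s} conditionally to get $\Ex[\|Z\|^p\mid A_k]\geq \delta\|v_0\|^p$ does not follow from that lemma with your $A_k$: restricting the integral to a smaller set can only decrease it. In fact the ``$A^p\leq A$'' observation you use to salvage $\Phi$ is a symptom of the same mismatch --- with the correct event $\{|X_1|^p\leq A\}$ the deterministic bound is $\|Y\|^p\leq 4A\lambda^{2k-2}\|Y'\|^p$ (with $A$, not $A^p$), and $\Phi=2^8A\lambda^{k-2}/(1-\lambda)$ then works directly, no patch needed. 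Replacing $|X_1|\leq A$ by $|X_1|^p\leq A$ throughout Case~2 repairs the proof and reduces it to the paper's argument.
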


\begin{proof}
For $n \leq k$ the assertion follows by Lemmas \ref{lem:1s} and \ref{lem:3s}, since $\ve_0\leq \delta/2$ and  
$\ve_1/k \leq \ve_1/n \leq \delta^2/(2n)$. For $n \geq k$ we proceed by induction on $n$. 

\medskip

\textbf{Case 1.} $\ve_0 \|v_0\|^p \leq \Phi \sum_{i=k}^{n+1}\lambda^i \|v_i\|^p $. 

In this case the induction step is the same as in the proof of Proposition \ref{prop:1}. 

\medskip

\textbf{Case 2.} $\ve_0 \|v_0\|^p > \Phi \sum_{i=k}^{n+1}\lambda^i \|v_i\|^p $. 

Let us define the set
\[
A_k := \{ |X_1|^p \leq A, |R_{2,k}|^p \leq 4 \lambda^{2k-2} \}.
\]
By the induction hypothesis we have
\begin{equation}
\label{row1s}
\Ex \left\|\sum_{i=0}^{n+1} v_i R_i\right\|^p \ind_{\Omega \setminus A_k} \geq  
\ve_0 \Ex \left\| \sum_{i=0}^{k} v_i R_i\right\|^p \ind_{\Omega \setminus A_k} 
+ \sum_{i=k+1}^{n+1} \left(\frac{\ve_1}{k}-c_{i-k} \right) \Ex \|v_i R_k\|^p \ind_{\Omega \setminus A_k}.
\end{equation}

By Chebyshev's inequality and \eqref{ass_small1} we get 
\begin{equation}
\label{eq:estr2k}
\Pr(|R_{2,k}|^p > 4 \lambda^{2k-2})\leq \frac{\Ex |R_{2,k}|^{p/2} }{2\lambda^{k-1}} \leq \frac{1}{2},
\end{equation}
in particular $\Pr(A_k)>0$.
Let $Y,Y',Z$ be defined as in the proof of Proposition \ref{prop:1}. As in \eqref{zv0} we show that Lemma \ref{lem:1s}
yields $\Ex \|Z\|^p \geq \delta \|v_0\|^p$. We have $\|Y\|^p \leq 4A\lambda^{2k-2}\|Y'\|^p$,
variables $Y'$ and $Z$ are independent and $Y'$ has the same distribution as $\sum_{i=k}^{n+1} v_i R_{k+1,i}$.  Thus, 
\begin{align*}
\Ex \|Z\|^p \ind_{\{ \|Y\|^p \geq \frac{1}{8} \Ex \|Z\|^p  \}} 
& \leq  \Ex \|Z\|^p \ind_{\{ 4A \lambda^{2k-2}\|Y'\|^p \geq \frac{\delta}{8} \|v_0\|^p \}} 
\\ 
& = \Ex \|Z\|^p \Pr\left( \|Y'\|^p \geq \frac{1}{4A \lambda^{2k-2}}  \ve_0\|v_0\|^p \right)
\\
& \leq \Ex \|Z\|^p \Pr\left( \left\|\sum_{i=k}^{n+1} v_i R_{k+1,i}\right\|^p 
\geq \frac{2^6}{1-\lambda} \sum_{i=k}^{n+1} \lambda^{i-k} \|v_i\|^p \right) 
\\ 
& \leq \frac{1}{8} \Ex \|Z\|^p,
\end{align*}
where the second inequality follows by the assumptions of Case 2 and the definition of $\Phi$ and the
last one by Lemma \ref{lem:2s}.
Hence, Lemma \ref{lem:4s} yields
\[
\Ex\|Y+Z\|^p \geq \Ex \|Y\|^p + \frac{1}{2} \Ex\|Z\|^p.
\]
Thus,
\begin{equation}
\label{fineq0s}
\Ex \left\| \sum_{i=0}^{n+1} v_i R_i \right\|^p \ind_{A_k} 
\geq \frac{1}{2} \Ex \left\|\sum_{i=0}^{k-1} v_i R_i\right\|^p \ind_{A_k} 
+ \Ex \left\|\sum_{i=k}^{n+1} v_i R_i\right\|^p \ind_{A_k}.
\end{equation}
Using Lemma \ref{lem:1s} and \eqref{eq:estr2k} we obtain
\[
\Ex \left\|\sum_{i=0}^{k-1} v_i R_i\right\|^p \ind_{A_k}  \geq  \delta \|v_0\|^p \Pr(|R_{2,k}|\leq 4\lambda^{2k-2}) 
\geq \frac{\delta}{2} \|v_0\|^p.
\]
Since $\ve_0 \leq \frac{1}{4}$ and $\ve_0 \leq \delta/8$, it follows that
\begin{equation}
\label{fineq1s}
\frac{1}{2} \Ex \left\|\sum_{i=0}^{k-1} v_i R_i\right\|^p \ind_{A_k}  \geq  
\ve_0 \|v_0\|^p + \ve_0 \Ex \left\|\sum_{i=0}^{k-1} v_i R_i\right\|^p \ind_{A_k}.
\end{equation}
By the induction assumption we obtain
\begin{equation}
\label{fineq2s}
\Ex \left\|\sum_{i=k}^{n+1} v_i R_i\right\|^p \ind_{A_k} \geq \ve_0 \Ex \|v_k R_k\|^p \ind_{A_k} 
+ \sum_{i=k+1}^{n+1} \left(\frac{\ve_1}{k} - c_{i-k}\right) \Ex \|v_i R_k\|^p \ind_{A_k}.
\end{equation}
Combining \eqref{fineq0s},  \eqref{fineq1s} and \eqref{fineq2s} we arrive at
\begin{align*}
\Ex\left\| \sum_{i=0}^{n+1} v_i R_i \right\|^p \ind_{A_k}  
&\geq \ve_0 \|v_0\|^p + \ve_0 \Ex \left\|\sum_{i=0}^{k-1} v_i R_i\right\|^p \ind_{A_k} 
+ \ve_0 \Ex \|v_k R_k\|^p \ind_{A_k} 
\\ 
&  \phantom{aa}  + \sum_{i=k+1}^{n+1} \left(\frac{\ve_1}{k} - c_{i-k}\right) \Ex \|v_i R_k\|^p \ind_{A_k} 
\\
&  \geq \ve_0 \|v_0\|^p +  \ve_0 \Ex \left\|\sum_{i=0}^{k} v_i R_i\right\|^p \ind_{A_k}  
+\sum_{i=k+1}^{n+1} \left(\frac{\ve_1}{k} - c_{i-k} \right) \Ex \|v_i R_k\|^p \ind_{A_k}.
\end{align*}
Combining this inequality with \eqref{row1s} yields
\begin{align*}
\Ex\left\| \sum_{i=0}^{n+1} v_i R_i \right\|^p 
& \geq \ve_0 \|v_0\|^p + \ve_0 \Ex \left\|\sum_{i=0}^{k} v_i R_i\right\|^p  
+\sum_{i=k+1}^{n+1} \left(\frac{\ve_1}{k} - c_{i-k} \right) \Ex \|v_i R_k\|^p  
\\
& \geq \ve_0 \|v_0\|^p + \frac{\ve_1}{k}\sum_{i=1}^{k} \|v_i\|^p 
+\sum_{i=k+1}^{n+1} \left(\frac{\ve_1}{k} - c_{i-k} \right)  \|v_i\|^p 
\\
& \geq \ve_0 \|v_0\|^p  +\sum_{i=1}^{n+1} \left(\frac{\ve_1}{k} - c_{i} \right)  \|v_i\|^p,
\end{align*}
where in the second inequality we used Lemma \ref{lem:3s}.
\end{proof}

We are now ready to establish the lower $L_p$-bound for $p\leq 1$.

\begin{proof}[Proof of the lower bound in Theorem \ref{thm_noniid_smallp}]
To show the lower bound let us choose $k$ such that
\[
k\lambda^{2k-2} \leq \frac{\delta^3(1-\lambda)^2}{ 2^{12} A  }.
\] 
Then
\[
c_i \leq \Phi\frac{\lambda^k}{1-\lambda} = \frac{2^{8} A\lambda^{2k-2}}{(1-\lambda)^2} \leq \frac{\ve_1}{2k}.
\]
Therefore, Proposition \ref{prop:2} implies
\[
\Ex \left\|\sum_{i=0}^n v_i R_i\right\|^p \geq \frac{\delta}{8}\|v_0\|^p 
+ \frac{\delta^3}{16k}   \sum_{i=1}^n \|v_i\|^p \geq \frac{\delta^3}{16k} \sum_{i=0}^n \|v_i\|^p.
\]

\end{proof}

\section{Upper bounds}
\label{sec:upper}

The upper bound in Theorem \ref{thm_noniid_smallp} immediately follows by the inequality
$(a+b)^p\leq a^p+b^p$, $a,b\geq 0$, $p\in (0,1]$. To get the upper bound in 
Theorem \ref{thm_noniid_largep} we prove the following result.

\begin{prop}
\label{prop:uppb}
Let $p>0$ and $X_1,X_2,\ldots $ be independent random variables such that
$\Ex|X_i|^p<\infty$ for all $i$ and  
\begin{equation}
\label{eq:assupper}
\forall_{1\leq k<\lceil p\rceil}\ \exists_{\lambda_k<1}\ \forall_{i}\
(\Ex|X_i|^{p-k})^{1/(p-k)} \leq \lambda_k(\Ex|X_i|^{p-k+1})^{1/(p-k+1)}. 
\end{equation}
Then for any vectors $v_0,v_1,\ldots,v_n$ in a normed space $(F,\|\ \|)$ we have
\begin{equation}
\label{eq:upper}
\Ex\left\|\sum_{i=0}^n v_iR_i\right\|^{p}\leq C(p)\sum_{i=0}^n\|v_i\|^p\Ex|R_i|^p,
\end{equation}
where $C(p)=1$ for $p\leq 1$ and for $p> 1$,
\[
C(p)=2^p\left(1+C(p-1)\frac{\lambda_1^{p-1}}{1-\lambda_1^{p-1}}\right)\leq 
2^p\frac{C(p-1)}{1-\lambda_1^{p-1}}.
\]
\end{prop}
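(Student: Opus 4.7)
The plan is an induction on $\lceil p \rceil$. For the base case $p\leq 1$, I would use the subadditivity of $t\mapsto t^p$ on $[0,\infty)$ together with the triangle inequality:
\[
\Bigl\|\sum_{i=0}^n v_i R_i\Bigr\|^p \leq \Bigl(\sum_{i=0}^n \|v_i\|\,|R_i|\Bigr)^p \leq \sum_{i=0}^n \|v_i\|^p |R_i|^p,
\]
and taking expectations gives the claim with $C(p)=1$.

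For the inductive step $p>1$, I would assume the statement at level $p-1$. Note that the moment comparisons at level $p-1$ involve the constants $\lambda_2,\dots,\lambda_{\lceil p\rceil-1}$ already present in \eqref{eq:assupper} at level $p$ (re-indexed), so the hypothesis is available with constant $C(p-1)$ depending only on those parameters. Inside this outer induction I would run a secondary induction on $n$. The case $n=0$ is immediate. For the step, I would decompose $S=v_0+X_1W$ with $W=\sum_{i=1}^n v_i R_{2,i}$ independent of $X_1$; the variable $W$ has exactly the same structure with $n-1$ independent factors $X_2,\dots,X_n$, so by the inner induction
\[
\Ex|X_1|^p\,\Ex\|W\|^p\leq C(p)\sum_{i=1}^n \|v_i\|^p\Ex|R_i|^p.
\]

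Starting from $\|S\|^p\leq(\|v_0\|+|X_1|\|W\|)^p$, the crucial analytic step is a scalar inequality that splits $(a+b)^p$ so that, beyond the main terms $a^p$ and $b^p$ (controlled by $2^p\max(a,b)^p\leq 2^p(a^p+b^p)$), a correction appears that can be handled by the level-$(p-1)$ hypothesis. Concretely, I would combine the max-based bound with a refined estimate that introduces a factor of $(|X_1|\|W\|)^{p-1}$ when $|X_1|\|W\|$ dominates; this factor, via the moment comparison $\Ex|X_1|^{p-1}\leq \lambda_1^{p-1}(\Ex|X_1|^p)^{(p-1)/p}$, yields a $\lambda_1^{p-1}$ saving. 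The level-$(p-1)$ bound is then applied to $\Ex\|W\|^{p-1}$ (equivalently, to $\Ex(\sum_{i\geq 1}\|v_i\||R_{2,i}|)^{p-1}$ in the scalar setting), followed by Young's inequality to reabsorb cross terms into $\|v_0\|^p$ and $\sum_{i\geq 1}\|v_i\|^p\Ex|R_i|^p$. Summing these contributions over iterates of the decomposition yields a geometric series $\sum_{k\geq 1}\lambda_1^{k(p-1)}=\lambda_1^{p-1}/(1-\lambda_1^{p-1})$, multiplied by $C(p-1)$; adding this correction to the main $2^p$ factor reproduces the recursion
\[
C(p) = 2^p\Bigl(1+C(p-1)\tfrac{\lambda_1^{p-1}}{1-\lambda_1^{p-1}}\Bigr).
\]

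The hardest part is the bookkeeping of cross terms: one must calibrate the scalar inequality so that after taking expectations, invoking the level-$(p-1)$ hypothesis and applying Young's inequality, the total coefficient on $\sum_{i\geq 0}\|v_i\|^p\Ex|R_i|^p$ is exactly $C(p)$. The naive Minkowski bound $(a+b)^p\leq 2^{p-1}(a^p+b^p)$ alone cannot close the recursion; the moment comparison $\lambda_1$ is essential to escape the Minkowski barrier and produce the geometric factor $1/(1-\lambda_1^{p-1})$.
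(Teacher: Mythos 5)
Your plan matches the paper's proof essentially step for step: reduce to the scalar nonnegative case, induct on $\lceil p\rceil$ with base case $(x+y)^p\leq x^p+y^p$, peel off terms using the refined inequality $(x+y)^p\leq x^p+2^p(yx^{p-1}+y^p)$, factor the cross term by independence, apply the level-$(p-1)$ bound and $\Ex|X_j|^{p-1}\leq\lambda_1^{p-1}$, decouple with Young's inequality, and sum the geometric series $\sum_{j\geq1}\lambda_1^{(p-1)j}$ to get the stated recursion for $C(p)$. The only thing left implicit is the precise form of the scalar inequality, but the strategy is the paper's.
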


\begin{proof}
We have $\|\sum_{i=0}^nv_iR_i\|\leq \sum_{i=0}^n\|v_i\||R_i|$ and $|R_i|=\prod_{j=1}^i|X_j|$, so it is enough to
consider the case when $F=\er$, $v_k\geq 0$ and variables $X_j$ are nonnegative. Since it is only a matter 
of normalization we may also assume that $\Ex X_i^p=1$ for all $i$.

We proceed by induction on $m:=\lceil p\rceil$.
If $m=1$, i.e. $0<p\leq 1$ then the assertion easily follows, since $(x+y)^p\leq x^p+y^p$, $x,y\geq 0$.  

Suppose that $m>1$ and \eqref{eq:upper} holds in the case $p\leq m$. Take $p$ such that
$m< p\leq m+1$. Observe that
\begin{equation}
\label{x+y_to_p}
(x+y)^{p}\leq x^p+2^p(yx^{p-1}+y^p) \quad \mbox{ for } x,y\geq 0.
\end{equation}
Indeed, either $x\leq y$ and then $(x+y)^p\leq 2^p y^p$, or $0\leq y<x$ and then by the convexity of $x^p$,
$((x+y)^p-x^p)/y\leq ((2x)^p-x^p)/x=(2^{p}-1)x^{p-1}$.

We have by \eqref{x+y_to_p}
\[
\Ex\left|\sum_{i=0}^nv_iR_i\right|^{p}\leq \Ex\left|\sum_{i=1}^nv_iR_i\right|^{p}
+2^p\left(v_0\Ex\left|\sum_{i=1}^nv_iR_i\right|^{p-1}+v_0^p\right).
\]
Iterating this inequality we get
\[
\Ex\left|\sum_{i=0}^nv_iR_i\right|^{p}\leq 
v_n^p\Ex R_n^p+2^p\left(\sum_{k=0}^{n-1}v_k\Ex R_{k}\left(\sum_{i=k+1}^nv_iR_{i}\right)^{p-1}
+\sum_{i=0}^{n-1}v_i^p\Ex R_i^p\right).
\]
However, $\Ex R_{k}(\sum_{i=k+1}^nv_iR_{i})^{p-1}=\Ex R_k^p\Ex(\sum_{i=k+1}^nv_iR_{k+1,i})^{p-1}$ and
$\Ex R_k^p=\prod_{j=1}^k \Ex X_j^p=1$. Hence
\[
\Ex\left|\sum_{i=0}^nv_iR_i\right|^{p}\leq 2^p\sum_{i=0}^nv_i^p
+2^p\sum_{k=0}^{n-1}v_k\Ex\left(\sum_{i=k+1}^nv_iR_{k+1,i}\right)^{p-1}.
\]
The induction assumption yields
\begin{align*}
\Ex\left(\sum_{i=k+1}^nv_iR_{k+1,i}\right)^{p-1} &\leq C(p-1)\sum_{i=k+1}^nv_i^{p-1}\Ex R_{k+1,i}^{p-1}=
C(p-1)\sum_{i=k+1}^nv_i^{p-1}\prod_{j=k+1}^i\Ex X_j^{p-1}
\\
&\leq C(p-1)\sum_{i=k+1}^nv_i^{p-1}\lambda_1^{(p-1)(i-k)},
\end{align*}
where the last inequality follows by \eqref{eq:assupper}.
To finish the proof we observe that
\begin{align*}
\sum_{k=0}^{n-1}v_k\sum_{i=k+1}^nv_i^{p-1}\lambda_1^{(p-1)(i-k)}&\leq
\sum_{0\leq k< i\leq n}\left(\frac{1}{p}v_k^p+\frac{p-1}{p}v_i^p\right)\lambda_1^{(p-1)(i-k)}
\\
&\leq \sum_{i=0}^nv_i^p\sum_{j=1}^{\infty}\lambda_1^{(p-1)j}
= \frac{\lambda_1^{p-1}}{1-\lambda_1^{p-1}}\sum_{i=0}^n v_i^p. 
\end{align*} 

\end{proof}

\noindent
{\bf Remark.} It is not hard to show by induction on $\lceil p\rceil$ that
\[
C(p)\leq 2^{\frac{p(p+1)}{2}}\prod_{1\leq j\leq \lceil p\rceil -1}\frac{1}{1-\lambda_j^{p-j}}.
\]

\section{Stochastic recursions}
\label{sec:perp}

The proof of Theorem \ref{mthmper} is only a slight modification of the proof of Theorem \ref{thm_main}. 
Normalizing we may always assume $\Ex X^p=1$. The upper bound follows as in the proof of Proposition \ref{prop:uppb} 
(see more details below). To show the lower bound we consider two cases:
\begin{equation}
\tag{C1}
\mbox{There are $w,u \in F$ such that $w+B+Xu =0$ a.e.}
\end{equation}
or
\begin{equation}
\tag{C2}
\Pr(w+B+Xu=0)<1 \mbox{ for every } w,u \in F.
\end{equation}

In case (C1) we get
\begin{align*}
\sum_{i=1}^nR_{i-1}B_i
&=\sum _{i=1}^nR_{i-1}(-w-X_iu)
=-\sum _{i=1}^nR_{i-1}w-\sum _{i=1}^nR_iu
\\
&=-\sum _{i=1}^{n}R_{i-1}(w+u)+u-R_nu.
\end{align*}
Notice that 
\[
\Ex \left\| \sum _{i=1}^{n}R_{i-1}(w+u)\right\| ^p=
\| w+u\| ^p\Ex \left| \sum _{i=1}^{n}R_{i-1}\right| ^p\geq c_{p,X}n\| w+u\| ^p,
\]
where the last inequality follows by Theorem \ref{thm_main} with $F=\er$ and $v_i=1$. 
Assumption \eqref{eq:nondeg} implies $w+u\neq 0$. Moreover, 
\[
\E \| u-R_nu\| ^p \leq 2^p\| u\| ^p(1+\Ex R_n^p)=2^{p+1}\|u\|^p.
\]
Hence for $n\geq n_0=n_0(X,B)$ and $c=c(p,X,B)=\frac{1}{2^{p+1}}c_{p,X}\| w+u\| ^p$,
\[
\Ex \left\|\sum_{i=1}^nR_{i-1}B_i\right\|^p=
\Ex \left\| \sum _{i=1}^{n}R_{i-1}(w+u)-(u-R_nu)\right\| ^p\geq cn.
\]
To get the lower bound in \eqref{bounds} for $1\leq n<n_0$ we observe that 
\begin{align*}
cnn_0&\leq \Ex \left\|\sum_{i=1}^{nn_0}R_{i-1}B_i\right\|^p=
\Ex \left\|\sum_{k=0}^{n_0-1}\sum_{i=kn+1}^{(k+1)n}R_{i-1}B_i\right\|^p
\\
&\leq n_0^{p}\sum_{k=0}^{n_0-1}\Ex \left\|\sum_{i=kn+1}^{(k+1)n}R_{i-1}B_i\right\|^p
=n_0^{p}\sum_{k=0}^{n_0-1}\Ex R_{kn}^p \Ex\left\|\sum_{i=kn+1}^{(k+1)n}R_{kn+1,i-1}B_i\right\|^p
\\
&=n_0^{p}n_0\Ex\left\|\sum_{i=1}^{n}R_{i-1}B_i\right\|^p,
\end{align*}
where the last equality follows since $\sum_{i=kn+1}^{(k+1)n}R_{kn+1,i-1}B_i$ has the same distribution as
$\sum_{i=1}^{n}R_{i-1}B_i$.

It is worth mentioning that the estimate $\Ex( \sum _{i=1}^{n}R_{i-1}) ^p\geq cn$ was first observed in 
\cite{BJMW} under the \emph{Goldie-Kesten conditions}. In fact, a stronger statement  was proved there: 
$\lim _{n\to \infty} \frac{1}{n}\E ( \sum _{i=1}^{n}R_{i-1}) ^p $ exists and it is strictly positive. 
Note also that if $u=-w$, i.e. assumption \eqref{eq:nondeg} is not satisfied, then
\[
\Ex\left\|\sum_{i=1}^nR_{i-1}B_i\right\|^p=\Ex\|u-R_nu\|^p\leq 2^{p+1}\|u\|^p
\]
and the lower bound in \eqref{bounds} cannot hold for large $n$. 

In the sequel, to derive the lower bound it is enough to consider case (C2). The following lemma is then a counterpart of
Lemmas \ref{lem:1} and \ref{lem:1s}.

\begin{lem}
\label{mlem}
Suppose that $X$ is a nonnegative, nondegenerate r.v., $B$ is a random vector with values in a separable Banach space $F$,
$\Ex X^p,\Ex\|B\|^p<\infty$ and for any $u,w\in F$, $\Pr(B+Xu=w)<1$. Then there exist
constants $A<\infty$ and $\delta>0$, depending only on the distribution of $(B,X)$ and $p$, such that
\[
\Ex\|w+B+Xu\|^p\ind_{\{X\leq A\}}\geq \delta \max\{\|w\|^p,\|u\|^p,\Ex\|B\|^p\}.
\] 
\end{lem}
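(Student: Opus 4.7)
I would prove this by breaking into three regimes based on which of $\|w\|^p$, $\|u\|^p$, $\Ex\|B\|^p$ dominates the others, handling the dominating regimes by direct triangle arguments and the remaining balanced region by a compactness/nondegeneracy argument. First, choose $A$ large enough that the tail part $\Ex\|w+B+Xu\|^p\ind_{\{X>A\}}$, which is bounded by $C_p(\|w\|^p\Pr(X>A)+\Ex\|B\|^p\ind_{\{X>A\}}+\|u\|^p\Ex X^p\ind_{\{X>A\}})$, is at most half of the eventual unconditional lower bound. The finiteness of all $p$-th moments makes this possible, reducing the claim to the unconditional inequality $\Ex\|w+B+Xu\|^p\geq 2\delta\max\{\|w\|^p,\|u\|^p,\Ex\|B\|^p\}$.

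For the $\|w\|^p$- and $\Ex\|B\|^p$-dominated regimes, apply the reverse triangle inequalities $\|w\|^p\leq c_p(\|w+B+Xu\|^p+\|B+Xu\|^p)$ and $\|B\|^p\leq c_p(\|w+B+Xu\|^p+\|w+Xu\|^p)$ together with the crude estimates $\Ex\|B+Xu\|^p\leq c_p(\Ex\|B\|^p+\|u\|^p)$ and $\Ex\|w+Xu\|^p\leq c_p(\|w\|^p+\|u\|^p)$ (using the normalization $\Ex X^p=1$). These yield $\Ex\|w+B+Xu\|^p\geq c\|w\|^p$ whenever $\|w\|$ dominates $(\Ex\|B\|^p)^{1/p}+\|u\|$ by a fixed multiplicative factor, and similarly for the $\Ex\|B\|^p$ bound. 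For the $\|u\|^p$-dominated regime, use $\|w+B+Xu\|\geq X\|u\|-\|w\|-\|B\|$ restricted to the event $\{X\geq \ve,\|B\|\leq M'\}$, which has probability bounded below after choosing $\ve, M'$ suitably using nondegeneracy of $X$ and the tightness of $B$: for $\|u\|$ sufficiently large compared to $\|w\|$ and $M'$, this gives $\Ex\|w+B+Xu\|^p\geq c\|u\|^p$.

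The remaining balanced regime, in which $\|w\|^p,\|u\|^p\leq C\Ex\|B\|^p$ and no single term dominates, is the main obstacle: a uniform positive lower bound on $\Ex\|w+B+Xu\|^p$ is needed as $(w,u)$ ranges over a fixed bounded set in $F\times F$. Compactness of balls in the separable but possibly infinite-dimensional Banach space $F$ is not available, so the argument proceeds by contradiction via subsequence compactness on the underlying probability space. If a bounded sequence $(w_n,u_n)$ satisfies $\Ex\|w_n+B+Xu_n\|^p\to 0$, then $w_n+B+Xu_n\to 0$ in $L^p$ and hence almost surely along a subsequence. Selecting two realizations $\omega_1,\omega_2$ with $X(\omega_1)\neq X(\omega_2)$ (which exist by nondegeneracy of $X$) and subtracting forces $u_n\to u^*:=(B(\omega_2)-B(\omega_1))/(X(\omega_1)-X(\omega_2))$ in norm; uniqueness of the limit makes $u^*$ independent of the chosen pair, so $B+Xu^*$ is almost surely constant, equal to some $-w^*$ with $w^*=\lim w_n$. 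But then $w^*+B+Xu^*=0$ almost surely, contradicting the nondegeneracy hypothesis. Combining the three regimes gives the claim with $\delta$ and $A$ depending only on the law of $(X,B)$ and $p$.
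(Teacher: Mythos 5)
Your tail-cutting step (choosing $A$ so that $\Ex\|w+B+Xu\|^p\ind_{\{X>A\}}$ eats at most half of the unconditional bound) and your treatment of the bounded region (contradiction via a.s.\ convergence along a subsequence, using separability/completeness of $F$ and two sample points $\omega_1,\omega_2$ with $X(\omega_1)\neq X(\omega_2)$) are both sound; the latter is a legitimate variant of the paper's argument, which instead shows $(w_n)$ and $(u_n)$ are Cauchy directly from the two-point inequality $\Ex\|w_n+B+Xu_n\|^p+\Ex\|w_m+B+Xu_m\|^p\geq 2^{-p}\delta_1\max\{\|w_n-w_m\|^p,\|u_n-u_m\|^p\}$. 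The problem is that your case decomposition does not cover all of $F\times F$. Take $\|w\|=\|u\|=T$ with $T\to\infty$ (say $w=-u$, or more generally $w\approx -tu$ with $t$ of order $1$). Then: the $\|w\|$-dominated regime requires $\|w\|^p\gtrsim K(\Ex\|B\|^p+\|u\|^p)$, which fails; your $\|u\|$-dominated argument needs $\ve\|u\|-\|w\|-M'\gtrsim\|u\|$ on $\{X\geq\ve\}$, which fails whenever $\|w\|\geq\ve\|u\|$ (and $\ve$ cannot exceed the essential sup of $X$); the $\Ex\|B\|^p$-dominated regime fails for large $T$; and the ``balanced'' region, which you assert is $\{\|w\|^p,\|u\|^p\leq C\Ex\|B\|^p\}$, is not where you land --- the complement of your three dominated regimes is unbounded, so the compactness argument cannot be invoked there. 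This missing regime is exactly the one where pure triangle inequalities cannot work: writing $w+Xu=(w+tu)+(X-t)u$, a lower bound of order $\|u\|^p$ forces you to control $\Ex|X-t|^p$ \emph{uniformly over all} $t\geq 0$, which is a quantitative use of the nondegeneracy of $X$ that appears nowhere in your proposal.

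The paper closes precisely this hole first: its Lemmas \ref{lem:1} and \ref{lem:1s} establish $\Ex\|w+Xu\|^p\geq\delta_1\max\{\|w\|^p,\|u\|^p\}$ for \emph{all} $w,u\in F$ (this is the uniform-in-$t$ statement above), and then $\Ex\|w+B+Xu\|^p\geq 2^{-p}\Ex\|w+Xu\|^p-\Ex\|B\|^p$ handles every $(w,u)$ with $\max\{\|w\|^p,\|u\|^p\}\geq 2^{p+1}\max\{1,\delta_1^{-1}\}\Ex\|B\|^p$, leaving a genuinely bounded set for the infimum/contradiction step. To repair your proof you either need such a two-vector lemma in place of the dominated-regime triangle arguments, or you must extend your subsequence argument to unbounded $(w_n,u_n)$ by normalizing by $m_n:=\max\{\|w_n\|,\|u_n\|\}\to\infty$; in the latter case $B/m_n\to 0$ and the same two-sample-point subtraction forces the normalized pair to converge to $(0,0)$, contradicting $\max\{\|\tilde w_n\|,\|\tilde u_n\|\}=1$. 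As written, the proof has a gap.
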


\begin{proof}
By $\delta_1$ and $\delta_2$  we will denote in the sequel positive constants depending only on the distribution of 
$(B,X)$ and $p$. Lemmas \ref{lem:1} and \ref{lem:1s} yield
\[
\Ex\|w+Xu\|^p\geq \delta_1 \max\{\|w\|^p,\|u\|^p\} \quad \mbox{ for any } w,u\in F.
\] 
Since $\|u_1+u_2\|^p\leq 2^p(\|u_1\|^p+\|u_2\|^p)$ for any $u_1,u_2\in F$, we get
\[
\Ex\|w+B+Xu\|^p\geq 2^{-p}\Ex\|w+Xu\|^p-\Ex\|B\|^p\geq  2^{-p-1}\delta_1 \max\{\|w\|^p,\|u\|^p,\Ex\|B\|^p\},
\]
provided that $\max\{\|w\|^p,\|u\|^p\}\geq M:=2^{p+1}\max\{1,\delta_1^{-1}\}\Ex\|B\|^p$.
Let
\[
\alpha:=\inf\left\{\Ex\|w+B+Xu\|^p\colon\ \max\{\|w\|^p,\|u\|^p\}\leq M\right\}.
\]

First we observe that $\alpha>0$. Indeed, assume that $\alpha=0$. Then there exist sequences 
$(u_n)$, $(w_n)$ in $F$ such
that $\|u_n\|^p\leq M$, $\|w_n\|^p\leq M$ and $\Ex\|w_n+B+Xu_n\|^p\rightarrow 0$. We have 
\begin{align*}
\Ex\|w_n+B+Xu_n\|^p+\Ex\|w_m+B+&Xu_m\|^p
\\
&\geq 2^{-p}\Ex\|(w_n+B+Xu_n)-(w_m+B+Xu_m)\|^p
\\
&\geq 2^{-p}\delta_1 \max\{\|w_n-w_m\|^p,\|u_n-u_m\|^p\}.
\end{align*}
Thus both sequences $(u_n)$ and  $(w_n)$ satisfy the Cauchy condition, hence they are convergent, respectively
to $u$ and $w$. But then $\Ex\|w+B+Xu\|^p=\lim_n\Ex\|w_n+B+Xu_n\|^p=0$, which contradicts our assumptions.

Therefore $\alpha>0$ and for  $\max\{\|w\|^p,\|u\|^p\}\leq M$ we get
\[
\Ex\|w+B+Xu\|^p \geq \alpha \geq 
\alpha\max\left\{\frac{1}{M}\|w\|^p,\frac{1}{M}\|u\|^p,\frac{1}{\Ex\|B\|^p}\Ex\|B\|^p\right\}.
\]
This way we showed that
\[
\Ex\|w+B+Xu\|^p\geq \delta_2\max\{\|w\|^p,\|u\|^p,\Ex\|B\|^p\} \quad \mbox{ for any } w,u\in F.
\]
To finish the proof it is enough to note that
\[
\Ex\|w+B+Xu\|^p\ind_{\{X> A\}}\leq 3^p\Ex(\|w\|^p+\|B\|^p+\|u\|^p)\ind_{\{X> A\}}\leq 
\frac{\delta_2}{2}\max\{\|w\|^p,\|u\|^p,\Ex\|B\|^p\},
\]
provided that $A$ is large enough.
\end{proof}
 
For the rest of the proof of the lower bound in \eqref{bounds} we do not need to assume that $(X_i,B_i)$ are i.i.d,
but we need uniformity in Lemma \ref{mlem}, i.e. the condition 
\begin{equation}
\label{basic}
\exists_{\delta>0, A<\infty}\
\forall_{i}\ \forall_{w,u\in F}\
\Ex \|w+B_i+X_iu \| ^p\ind_{\{X_i^p\leq A\Ex X_i^p\}} \geq \delta 
\max \{\Ex \| B_i\| ^p, \| w\| ^p, \| u\| ^p\Ex X_i^p\}.
\end{equation}  
More precisely, the following theorems hold.

\begin{thm}
\label{mthmB}
Let $0<p\leq 1$ and let $(X_1,B_1), (X_2,B_2)...\in \R ^+\times F$ be a  sequence of independent random variables 
such that $\Ex \| B_i\| ^p, \Ex X_i^p <\infty$. Suppose that conditions \eqref{ass_small1} and \eqref{basic} 
are satisfied.
Then there is a constant  $c(p,\lambda , \delta , A)$ such that for every $n$,
\begin{equation}
\label{result1}
c(p,\lambda , \delta , A)\sum _{i=1}^n(\Ex R_{i-1}^p)\Ex \| B_i\| ^p
\leq \Ex \left\| \sum _{i=1}^n R_{i-1} B_i\right\| ^p
\leq   \sum _{i=1}^n(\Ex R_{i-1}^p)\Ex \| B_i\| ^p.
\end{equation}
\end{thm}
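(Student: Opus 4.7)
The upper bound in \eqref{result1} is immediate: for $p\in(0,1]$ and nonnegative reals $a,b$ we have $(a+b)^p\leq a^p+b^p$, so the iterated triangle inequality gives $\|\sum_{i=1}^n R_{i-1}B_i\|^p\leq \sum_{i=1}^n R_{i-1}^p\|B_i\|^p$. Since the pairs $(X_j,B_j)$ are independent, $R_{i-1}$ (a function of $X_1,\dots,X_{i-1}$) is independent of $B_i$, and taking expectations yields the right-hand side of \eqref{result1}.

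For the lower bound, the plan is to mirror, step by step, the proof of the lower bound in Theorem \ref{thm_noniid_smallp}, treating $\E\|B_i\|^p$ as the analogue of $\|v_{i-1}\|^p$. After normalizing so that $\E X_i^p=1$ (hence $\E R_{i-1}^p=1$), hypothesis \eqref{basic} serves as the replacement for Lemma \ref{lem:1s}. Concretely, for any index $j$ write
\[
S_n=Y+R_{j-1}(B_j+X_jU),\quad Y=\sum_{i<j}R_{i-1}B_i,\quad U=\sum_{i>j}R_{j+1,i-1}B_i,
\]
so that the triple $(Y,R_{j-1},U)$ is independent of the pair $(X_j,B_j)$. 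Conditioning on $(Y,R_{j-1},U)$, rescaling by $R_{j-1}^p$ on $\{R_{j-1}>0\}$, and applying \eqref{basic} with $w=Y/R_{j-1}$ and $u=U$ produce the conditional bound $\delta R_{j-1}^p\E\|B_j\|^p$; averaging and using $\E R_{j-1}^p=1$ gives $\E\|S_n\|^p\geq\delta\max_{1\leq j\leq n}\E\|B_j\|^p$, the perpetuity counterpart of the combined Lemmas \ref{lem:1s} and \ref{lem:3s}. The tail estimate of Lemma \ref{lem:2s} transfers since $p/2\leq1$: subadditivity, independence of $R_{i-1}$ from $B_i$, and \eqref{ass_small1} give $\E\|S_n\|^{p/2}\leq\sum\lambda^{i-1}(\E\|B_i\|^p)^{1/2}$, and Cauchy--Schwarz plus Chebyshev deliver $\Pr(\|S_n\|^p\geq\frac{t}{1-\lambda}\sum\lambda^{i-1}\E\|B_i\|^p)\leq t^{-1/2}$. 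Lemma \ref{lem:4s} concerns only abstract random vectors and applies verbatim.

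Equipped with these three ingredients, the two-case induction of Proposition \ref{prop:2} runs through with $\E\|B_i\|^p$ replacing $\|v_{i-1}\|^p$: set $A_k:=\{X_1^p\leq A,\;|R_{2,k}|^p\leq 4\lambda^{2k-2}\}$ and decompose
\[
\sum_{i=1}^{n+1}R_{i-1}B_i=\sum_{i=1}^{k-1}R_{i-1}B_i+R_k\sum_{i=k}^{n+1}R_{k+1,i-1}B_i,
\]
whose two summands are independent. Case 1, when $\E\|B_1\|^p$ is small compared to the tail, is handled by conditioning on $(X_1,B_1)$ and applying the inductive hypothesis to the reindexed sum, using that on $\{X_1>0\}$ the rescaled pair $(X_{j+1},X_1 B_{j+1})$ still satisfies \eqref{basic} with the same $\delta$; Case 2 combines Lemma \ref{lem:4s} on the above decomposition with the tail estimate and the $\delta\E\|B_1\|^p$ bound to produce the inductive step. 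The main obstacle I anticipate is the conditioning bookkeeping: because $X_j$ and $B_j$ may be correlated inside each pair, every invocation of \eqref{basic} must freeze all other pairs and absorb the random factor $R_{j-1}$ into the arguments $w,u$, with care on the exceptional set $\{R_{j-1}=0\}$. Once this is done, the induction yields $\E\|S_n\|^p\geq c(p,\lambda,\delta,A)\sum_i\E\|B_i\|^p$, which under our normalization is exactly the left-hand side of \eqref{result1}.
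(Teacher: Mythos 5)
Your overall plan is the right one and matches the paper's: upper bound by subadditivity, lower bound by transplanting the three ingredients (the $\max$-bound via \eqref{basic} in place of Lemma~\ref{lem:1s}/\ref{lem:3s}, the tail estimate of Lemma~\ref{lem:2s}, and Lemma~\ref{lem:4s} verbatim) into the two-case induction of Proposition~\ref{prop:2}. The conditioning-and-rescaling argument you describe for the $\max$-bound (freeze all other pairs, divide by $R_{j-1}$ on $\{R_{j-1}>0\}$, apply \eqref{basic}, then average) is exactly how the paper derives \eqref{eq:1sp1} and \eqref{eq:1sp2}.

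However, there is a genuine structural gap in how you set up the induction. You propose to run Proposition~\ref{prop:2} ``with $\E\|B_i\|^p$ replacing $\|v_{i-1}\|^p$,'' so that $\E\|B_1\|^p$ plays the role of $\|v_0\|^p$. But this mapping does not produce a closed inductive statement: in Case~1, after conditioning on $(X_1,B_1)$ and factoring out $X_1$ on $\{X_1>0\}$, you face $\E\,\bigl\|\,X_1^{-1}B_1+\sum_{i\geq 2}R_{2,i-1}B_i\,\bigr\|^p$, i.e.\ the same type of sum but prefixed by a (conditionally deterministic) vector. Your inductive statement as written has nothing to plug this into. The paper resolves this by stating Proposition~\ref{mainprop} with an extra free deterministic vector $w$, proving $\E\|w+\sum_{i=1}^n R_{i-1}B_i\|^p\geq \ve_0\|w\|^p+\sum_{i=1}^n(\ve_1/k-c_i)\E\|B_i\|^p$; the $\ve_0$-weight goes on $\|w\|^p$, not on $\E\|B_1\|^p$, and the Case~1 condition compares $\|w\|^p$ to the tail. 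Your closing remark about ``absorbing the random factor $R_{j-1}$ into the arguments $w,u$'' shows you sense this, but the fix is not mere bookkeeping: the inductive hypothesis itself must carry $w$. Also note that the decomposition you wrote, $\sum_{i=1}^{n+1}R_{i-1}B_i=\sum_{i=1}^{k-1}R_{i-1}B_i+R_k\sum_{i=k}^{n+1}R_{k+1,i-1}B_i$, is mis-indexed (the right side double-counts an $X_k$; it should be $R_{k-1}\sum_{i=k}^{n+1}R_{k,i-1}B_i$ or equivalently split at $k$ with prefactor $R_k$ only from $i=k+1$), and the two pieces are not independent: the prefactor involves $X_1,\ldots,X_{k-1}$, which also appear in the first sum. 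Independence holds only between $Z:=w+\sum_{i\leq k}R_{i-1}B_i$ and $Y':=\sum_{i>k}R_{k+1,i-1}B_i$, and one writes $Y=R_kY'$ and conditions on the event $U_k\in\sigma(X_1,\ldots,X_k)$, exactly as in the proof of Proposition~\ref{prop:2}.
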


\begin{thm}
\label{mthmBB}
Let $p>1$ and let $(X_1,B_1), (X_2,B_2)...\in \R ^+\times F$ be a  sequence of independent random variables 
such that $\Ex \| B_i\| ^p, \Ex X_i^p <\infty$. Suppose that conditions \eqref{ass_large2}, \eqref{ass_large3} and
\eqref{basic}  are satisfied.   
Then there are constants  $c = c(p,q,\lambda , \delta , A),C(p,\lambda_1,\dots \lambda_{\lceil p\rceil -1})$ such that 
for every $n$,
\begin{equation}
\label{result2}
c(p,\lambda, \delta , A)\sum _{i=1}^n(\Ex R_{i-1}^p)\Ex \| B_i\| ^p
\leq \Ex \left\| \sum _{i=1}^n R_{i-1} B_i\right\| ^p
\leq C(p,\lambda_1,\dots \lambda_{\lceil p\rceil -1}) \sum _{i=1}^n(\Ex R_{i-1}^p)\Ex \| B_i\| ^p.
\end{equation}
\end{thm}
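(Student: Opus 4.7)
After normalizing so that $\Ex X_i^p = 1$, the upper bound in Theorem \ref{mthmBB} follows by adapting the proof of Proposition \ref{prop:uppb} to the setting of random vector coefficients. I would write $S_n = B_1 + X_1 T_n$, where $T_n = \sum_{i=2}^n R_{2,i-1} B_i$ is independent of $(X_1, B_1)$. Applying the pointwise inequality $(x+y)^p \leq y^p + 2^p(xy^{p-1} + x^p)$ with $x = \|B_1\|$ and $y = X_1\|T_n\|$, then taking expectations and using independence, yields
\[
\Ex\|S_n\|^p \leq \Ex X_1^p\,\Ex\|T_n\|^p + 2^p\bigl(\Ex[\|B_1\|X_1^{p-1}]\,\Ex\|T_n\|^{p-1} + \Ex\|B_1\|^p\bigr).
\]
The cross term $\Ex[\|B_1\|X_1^{p-1}]$ is controlled by H\"older as $(\Ex\|B_1\|^p)^{1/p}(\Ex X_1^p)^{(p-1)/p}$, which plays the role of $v_0 \Ex X_1^{p-1}$ from Proposition \ref{prop:uppb}. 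Iterating this recursion and using induction on $\lceil p \rceil$ together with \eqref{ass_large3} to dominate the $(p-1)$-st moments of the tail products by geometric factors produces the claimed upper bound, with the same constant $C(p,\lambda_1,\dots,\lambda_{\lceil p\rceil-1})$.

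For the lower bound I would follow the strategy of Proposition \ref{prop:1}. The hypothesis \eqref{basic} plays the role of Lemma \ref{lem:1}: $\Ex\|w + B_i + X_i u\|^p \ind_{\{X_i^p \leq A\}} \geq \delta\max\{\|w\|^p,\|u\|^p,\Ex\|B_i\|^p\}$. Applying this twice yields the analogue of Lemma \ref{lem:3}, namely $\Ex\|S_n\|^p \geq c\max_{1\leq i\leq n} \Ex R_{i-1}^p\,\Ex\|B_i\|^p$, which settles the base case $n\leq k$.

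The main technical obstacle is adapting Lemma \ref{lem:2}, which is used with \emph{deterministic} coefficients $v_i$, to random coefficients $B_i$ for which only the $p$-th moment is assumed finite. I would resolve this by conditioning on $(B_i)_i$: Lemma \ref{lem:2} applies conditionally with the realized $B_i$'s as deterministic coefficients, and then Jensen's inequality for the concave function $x \mapsto x^{(p-1)/p}$ (valid since $p>1$) converts the conditional estimate into the unconditional bound
\[
\Ex\Big\|\sum_{i=k+1}^{n+1} R_{k+1,i-1} B_i\Big\|^{p-1} \leq C(p,q,\lambda)\Big(\sum_{i=k+1}^{n+1} \lambda^{i-k}\Ex\|B_i\|^p\Big)^{(p-1)/p}.
\]
Analogous conditional-then-Jensen arguments furnish the substitutes for \eqref{tail}, \eqref{eqq2} and \eqref{eqq3}.

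With these ingredients, the induction of Proposition \ref{prop:1} transfers essentially verbatim, replacing $\|v_0\|^p$ by $\Ex\|B_1\|^p$ and $\|v_i\|^p$ by $\Ex R_{i-1}^p\,\Ex\|B_i\|^p$ throughout. One defines the event $A_k = \{X_1^p \leq A,\ R_{2,k}^p \leq 2^{p/q}\lambda^{p(k-1)}\}$, decomposes the sum into the analogues of $Y, Y', Z$, and applies Lemma \ref{lem:4} as before. Taking $k$ sufficiently large so that the cumulative subtracted constants $c_i$ are dominated by $\ve_1/(2k)$ yields the lower bound with a constant $c(p,q,\lambda,\delta,A)$ depending only on the listed parameters. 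The only substantive novelty relative to Proposition \ref{prop:1} is the conditional tail bound above; the rest is bookkeeping.
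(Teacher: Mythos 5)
Your treatment of the upper bound is essentially the paper's argument; the only difference is that you apply H\"older to separate $\Ex[\|B_1\|X_1^{p-1}]$ and then Jensen on $\Ex\|T_n\|^{p-1}$, whereas the paper applies the Young-type inequality $\|B_k\|X_k^{p-1}\|B_i\|^{p-1}\le \tfrac1p\|B_k\|^p+\tfrac{p-1}{p}X_k^p\|B_i\|^p$ pointwise before taking expectations. Both orderings lead to the same constants, and both correctly exploit the per-term independence of $(X_k,B_k)$ from $B_i$ for $i>k$.

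The lower bound, however, contains a genuine gap in the step you flag as the main novelty. You propose to condition on the whole vector $(B_i)_i$ and then invoke Lemma~\ref{lem:2} with the realized $B_i$'s as deterministic coefficients. But in Theorem~\ref{mthmBB} the pairs $(X_i,B_i)$ need \emph{not} have independent components; in fact handling the dependent case is the entire point of the theorem (the independent case is already Theorem~\ref{thm_main}). Conditioning on $(B_i)_i$ therefore changes the law of each $X_i$, and the hypothesis \eqref{ass_large2}, which your conditional application of Lemma~\ref{lem:2} would need to hold for the conditional law of $X_i$ given $B_i$, may simply fail. There is no reason the conditional $q$-th moment of $X_i$ is uniformly bounded by $\lambda<1$, so the conditional tail estimate cannot be asserted, and the subsequent Jensen step has nothing to integrate.

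The paper avoids conditioning entirely. The decisive observation is that $R_{k+1,i-1}=X_{k+1}\cdots X_{i-1}$ is independent of $B_i$ (the index $i$ does not appear among $k+1,\dots,i-1$), so Minkowski in $L^q$ gives
\[
\left(\Ex\Big\|\sum_{i}R_{k+1,i-1}B_i\Big\|^q\right)^{1/q}
\le\sum_i\left(\Ex R_{k+1,i-1}^q\right)^{1/q}\left(\Ex\|B_i\|^q\right)^{1/q}
\le\sum_i\lambda^{i-k-1}\left(\Ex\|B_i\|^p\right)^{1/p},
\]
where the last step uses \eqref{ass_large2} and Jensen; note that after the normalization $\Ex X_i^p=1$ the assumption $(\Ex X_i^q)^{1/q}\le\lambda<1$ forces $q<p$, so Jensen applies in the needed direction. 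H\"older and Chebyshev then give the tail bound with no conditioning on $(B_i)_i$ required, and from there the derivation of the analogues of \eqref{tail}, \eqref{eqq2}, \eqref{eqq3} goes through as you describe. In short: replace the conditioning-plus-Jensen device with the direct $L^q$-Minkowski bound exploiting the per-term independence of $R_{k+1,i-1}$ and $B_i$, and the rest of your outline for the lower bound (the event $A_k$, the splitting into $Y,Y',Z$, Lemma~\ref{lem:4}, and the choice of $k$) is sound.
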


Since it is only a matter of normalization we may and will assume that $\Ex X_i^p=1$.

First we prove the upper bound in \eqref{result2}. 
Proceeding by induction, as in the proof of Proposition \ref{prop:uppb}, we get
\begin{align*}
\Ex \left\| \sum _{i=1}^nR_{i-1}B_i\right\| ^p
&\leq \Ex \left\| \sum _{i=2}^nR_{i-1}B_i\right\| ^p 
+2^p\left(\Ex \| B_1\| \left(\sum _{i=2}^nR_{i-1}\| B_i\| \right)^{p-1} +\Ex \| B_1\| ^p\right )
\\
&=\E \left\| \sum _{i=2}^nR_{i-1}B_i\right\| ^p 
+2^p\left (\Ex \| B_1\|X_1^{p-1} \left(\sum _{i=2}^nR_{2,i-1}\| B_i\| \right)^{p-1} +\Ex \| B_1\| ^p\right).
\end{align*}  
Iterating this inequality we obtain
\begin{align*}
\Ex &\left\| \sum _{i=1}^nR_{i-1}B_i\right\| ^p
\\
&\leq \Ex \| B_n\| ^p+ 2^p\sum _{k=1}^{n-1}\Ex \| B_k\|R_{k-1} R_k^{p-1}\left( \sum _{i=k+1}^nR_{k+1,i-1}\| B_i\|\right)^{p-1} 
+2^p\sum _{i=1}^{n-1}\Ex\| B_i\| ^p
\\
&\leq 2^p\sum _{i=1}^{n}\Ex\| B_i\| ^p
+2^p\sum _{k=1}^{n-1}\Ex \| B_k\| X_k^{p-1}\Ex\left( \sum _{i=k+1}^nR_{k+1,i-1}\| B_i\|\right) ^{p-1}.
\end{align*}
By the induction assumption 
\begin{align*}
\Ex \left( \sum _{i=k+1}^nR_{k+1,i-1}\| B_i\|\right ) ^{p-1}
&\leq C(p-1) \sum _{i=k+1}^n(\E R_{k+1,i-1}^{p-1})\Ex \| B_i\| ^{p-1}
\\
&\leq C(p-1) \sum _{i=k+1}^n\lambda _1^{(i-1-k)(p-1)}\Ex \| B_i\| ^{p-1}.
\end{align*}
Hence,
\[
\Ex \left\| \sum _{i=1}^nR_{i-1}B_i\right\| ^p
\leq 2^p\sum _{i=1}^{n}\Ex\| B_i\| ^p+
2^pC(p-1)\sum _{k=1}^{n-1}\sum _{i=k+1}^n\lambda_1^{(i-1-k)(p-1)}\Ex \| B_k\| X_k^{p-1}\| B_i\| ^{p-1}.
\]
To finish the proof of the upper bound we observe that for $k<i$,
\[
\Ex \| B_k\| X_k^{p-1}\| B_i\| ^{p-1}
\leq \frac{1}{p}\Ex(\| B_k\|^p+ (p-1)X_k^{p}\| B_i\| ^{p})=\frac{1}{p}(\Ex \| B_k\|^p+ (p-1)\Ex \| B_i\| ^{p}).
\]
Therefore,
\begin{align*}
\sum _{k=1}^{n-1}\sum _{i=k+1}^n&\lambda_1^{(i-1-k)(p-1)}\Ex \| B_k\| X_k^{p-1}\| B_i\| ^{p-1}
\\
&\leq \sum _{k=1}^{n-1}\sum _{i=k+1}^n\lambda_1^{(i-1-k)(p-1)}
\left(\frac{1}{p}\E \| B_k\|^p+ \frac{p-1}{p}\E \| B_i\| ^{p}\right)
\leq \frac{1}{1-\lambda_1^{p-1}}\sum _{i=1}^{n}\Ex\| B_i\| ^p
\end{align*}
and the conclusion follows.

To prove the lower bounds in \eqref{result1} and \eqref{result2} we follow closely arguments of 
Sections \ref{sec:lowerlarge} and \ref{sec:lowersmall}, making use of \eqref{basic}  whenever 
Lemma \ref{lem:1} or Lemma \ref{lem:1s} are used. For instance, to obtain the estimate 
\begin{equation}
\label{eq:1sp1}
\Ex \left\| w+ \sum _{i=1}^{n}R_{i-1}B_i\right\| ^p \geq \delta \max_{1\leq j\leq n}\Ex \| B_j\| ^p
\geq \frac{\delta}{n}\sum_{j=1}^n\Ex \| B_j\| ^p
\end{equation}
we proceed as follows. For $1\leq j\leq n$ we have
\[
\Ex \left\|  w+ \sum _{i=1}^{n}R_{i-1}B_i\right\| ^p
\geq \Ex \left\|  w+ \sum _{i=1}^{n}R_{i-1}B_i\right\| ^p\ind_{\{R_{j-1}>0\}}
=\Ex R_{j-1}^p \| Y_j+B_j+X_jZ_j\| ^p,
\]
where  
\[
Y_j:=\left(w+\sum _{i=1}^{j-1}R_{i-1}B_i\right)\frac{1}{R_{j-1}}\ind_{\{R_{j-1}>0\}}
\quad\mbox{ and }\quad
Z_j:=\sum _{i=j+1}^nR_{j+1,i-1}B_i.
\]
Since variables $R_{j-1},Y_j$ and $Z_j$ are independent of $(X_j,B_j)$, condition \eqref{basic} yields
\[
\Ex \left\|  w+ \sum _{i=1}^{n}R_{i-1}B_i\right\| ^p\geq \delta \Ex R_{j-1}^p\Ex \| B_j\|^p=\delta\Ex \| B_j\| ^p.
\]
Similar argument used for $j=1$ yields 
\begin{equation}
\label{eq:1sp2}
\Ex \left\|  w+ \sum _{i=1}^{n}R_{i-1}B_i\right\| ^p\geq \delta \| w\| ^p.
\end{equation}

For the rest of this section let us concentrate on the case $p\leq 1$,
presenting only the parts of the argument that are specific 
for the setting of Theorem \ref{mthmB}. If $p>1$ the argument is completely analogous. In this situation 
Lemma \ref{lem:2s}  holds with the same proof.

\begin{lem} 
Suppose the assumptions of Theorem \ref{mthmB} are satisfied. Then for $t>0$,
\[
\Pr \left(\left\| \sum _{i=1}^{n}X_1\dots X_{i-1}B_i\right\| ^p\geq 
\frac{t}{1-\lambda}\sum _{i=1}^{n}\lambda^{i-1}\Ex \|B_i\| ^p \right)\leq t^{-1\slash 2}.
\]
\end{lem}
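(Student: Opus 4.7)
The plan is to imitate the proof of Lemma \ref{lem:2s} almost verbatim, with the only new ingredient being that the coefficients $\|v_i\|$ of the deterministic case must be replaced by the random vectors $B_i$, which are however independent of the $X_j$'s. The role played by subadditivity of $x \mapsto x^{p/2}$ (valid since $p/2 \le 1/2 \le 1$) will be identical.

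First I would apply the subadditivity $(x+y)^{p/2} \le x^{p/2} + y^{p/2}$ to the quantity $\|\sum_{i=1}^n R_{i-1} B_i\|^{p/2}$ after using the triangle inequality, obtaining
\[
\left\|\sum_{i=1}^n R_{i-1} B_i\right\|^{p/2} \le \sum_{i=1}^n R_{i-1}^{p/2} \|B_i\|^{p/2}.
\]
Taking expectations and using independence of the $X_j$'s and $B_i$'s gives
\[
\Ex\left\|\sum_{i=1}^n R_{i-1} B_i\right\|^{p/2} \le \sum_{i=1}^n \Ex R_{i-1}^{p/2}\, \Ex \|B_i\|^{p/2}.
\]
Now I would use assumption \eqref{ass_small1} together with the normalization $\Ex X_j^p = 1$ to bound $\Ex R_{i-1}^{p/2} = \prod_{j=1}^{i-1} \Ex X_j^{p/2} \le \lambda^{i-1}$, and Jensen's inequality to bound $\Ex \|B_i\|^{p/2} \le (\Ex \|B_i\|^p)^{1/2}$. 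This yields
\[
\Ex\left\|\sum_{i=1}^n R_{i-1} B_i\right\|^{p/2} \le \sum_{i=1}^n \lambda^{i-1} (\Ex\|B_i\|^p)^{1/2}.
\]

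Next, just as in Lemma \ref{lem:2s}, I would apply the Cauchy--Schwarz inequality to obtain
\[
\left(\sum_{i=1}^n \lambda^{i-1} (\Ex\|B_i\|^p)^{1/2}\right)^2 \le \left(\sum_{i=1}^n \lambda^{i-1}\right)\sum_{i=1}^n \lambda^{i-1} \Ex\|B_i\|^p \le \frac{1}{1-\lambda}\sum_{i=1}^n \lambda^{i-1} \Ex\|B_i\|^p.
\]
Finally, Chebyshev's inequality applied to the random variable $\|\sum_{i=1}^n R_{i-1} B_i\|^{p/2}$ (exactly as in the deterministic case) produces the claimed tail bound with the factor $t^{-1/2}$. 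There is essentially no obstacle here: the only point that might need a moment of care is the use of independence between $R_{i-1}$ and $B_i$ to factor the expectation $\Ex R_{i-1}^{p/2} \|B_i\|^{p/2}$, but this is immediate since $B_i$ depends only on the $i$-th coordinate of the independent sequence $(X_j, B_j)_{j \ge 1}$ while $R_{i-1}$ depends only on earlier $X_j$'s.
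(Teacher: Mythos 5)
Your proof is correct and matches the paper's approach exactly: the paper simply remarks that ``Lemma~\ref{lem:2s} holds with the same proof,'' and you have carried out precisely that adaptation, with the only nontrivial points being the factorization $\Ex\bigl(R_{i-1}^{p/2}\|B_i\|^{p/2}\bigr)=\Ex R_{i-1}^{p/2}\,\Ex\|B_i\|^{p/2}$ (valid since $R_{i-1}$ depends only on $(X_j,B_j)_{j<i}$) and the Jensen step $\Ex\|B_i\|^{p/2}\le(\Ex\|B_i\|^p)^{1/2}$, both of which you identified.
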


The main proposition (analogous to Proposition \ref{prop:2}) can be formulated as follows.
\begin{prop}
\label{mainprop}
Suppose that the assumptions of Theorem \ref{mthmB} are satisfied and $\Ex X_i^p=1$ for all $i$. Then for any $w\in F$ and
$k=1,2,\ldots$ we have
\[
\Ex \left\| w+ \sum _{i=1}^{n}R_{i-1}B_i\right\| ^p
\geq \ve_0\| w\| ^p+ \sum _{i=1}^{n}\left(\frac{\ve_1}{k}-c_i\right)\Ex \| B_i\| ^p,
\]
where $\ve_0=\delta/8$, $\ve_1=\delta\ve_0$,
\[
c_i=0\mbox{ for } 1\leq i\leq k-1, \quad
c_i=\Phi \sum _{j=k}^i\lambda ^{j-1}, i\geq k\quad \mbox{ and }\quad  \Phi =\frac{2^{8}A}{(1-\lambda )}\lambda ^{k-2}. 
\]
\end{prop}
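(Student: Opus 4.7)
The plan is to prove Proposition \ref{mainprop} by induction on $n$, directly imitating the structure of Proposition \ref{prop:2}: under the natural correspondence in which $w$ plays the role of $v_0$ and each random $B_i$ plays the role of $v_i$, the uniform condition \eqref{basic} takes over the job of Lemma \ref{lem:1s} whenever a lower bound on $\Ex\|\mathrm{const} + X_i(\cdot)\|^p$ is needed, and the random-coefficient analog of Lemma \ref{lem:2s} (stated just before Proposition \ref{mainprop}) supplies the tail estimate.

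For the base case $n \leq k$, averaging \eqref{eq:1sp1} and \eqref{eq:1sp2} yields $\Ex\|w+\sum R_{i-1}B_i\|^p \geq \tfrac{\delta}{2}\|w\|^p + \tfrac{\delta}{2k}\sum_i \Ex\|B_i\|^p$, which absorbs $\ve_0\|w\|^p + \sum_i(\ve_1/k - c_i)\Ex\|B_i\|^p$ since $\ve_0 = \delta/8$, $\ve_1/k = \delta^2/(8k)$, and $c_i \geq 0$. For the induction step at $n+1$ I split into two cases as in Proposition \ref{prop:2}. In Case 1, $\ve_0\|w\|^p \leq \Phi\sum_{i=k}^{n+1}\lambda^{i-1}\Ex\|B_i\|^p$, I condition on $(X_1,B_1)$ and apply the induction hypothesis to the shifted sequence $(X_{j+1},B_{j+1})_{j=1}^n$ with initial vector $(w+B_1)/X_1$ (the case $X_1=0$ being trivial); after multiplying by $X_1^p$ and integrating over $(X_1, B_1)$ using $\Ex X_1^p = 1$, one obtains
\[
\Ex\left\|w+\sum_{i=1}^{n+1}R_{i-1}B_i\right\|^p \geq \ve_0\Ex\|w+B_1\|^p + \sum_{i=2}^{n+1}(\ve_1/k - c_{i-1})\Ex\|B_i\|^p.
\]
Then \eqref{basic} with $u=0$ gives $\ve_0\Ex\|w+B_1\|^p \geq \ve_1\Ex\|B_1\|^p \geq (\ve_1/k)\Ex\|B_1\|^p$, and adding the nonpositive quantity $\ve_0\|w\|^p - \Phi\sum_{i=k}^{n+1}\lambda^{i-1}\Ex\|B_i\|^p$ together with the telescoping identity $c_i - c_{i-1} = \Phi\lambda^{i-1}$ (for $i \geq k$, with $c_i=0$ for $i<k$) closes this case.

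In Case 2, $\ve_0\|w\|^p > \Phi\sum_{i=k}^{n+1}\lambda^{i-1}\Ex\|B_i\|^p$, I define $A_k := \{X_1^p \leq A,\ R_{2,k}^p \leq 4\lambda^{2(k-1)}\}$ and decompose the sum as $Z+Y$, where $Z := w+\sum_{i=1}^{k}R_{i-1}B_i$ and $Y := \sum_{i=k+1}^{n+1}R_{i-1}B_i = X_1 R_{2,k}Y'$ with $Y' := \sum_{i=k+1}^{n+1}R_{k+1,i-1}B_i$. Crucially, $Y'$ depends only on $(X_{k+1},\ldots,X_n,B_{k+1},\ldots,B_{n+1})$ and is hence independent of $(A_k,Z)$, while $\|Y\|^p \leq 4A\lambda^{2(k-1)}\|Y'\|^p$ on $A_k$. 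Applying \eqref{basic} to $(X_1,B_1)$ conditionally on the remaining variables, together with the Chebyshev-type estimate $\Pr(R_{2,k}^p \leq 4\lambda^{2(k-1)}) \geq 1/2$ from \eqref{ass_small1} and the independence of $X_1$ from $R_{2,k}$, gives $\Ex[\|Z\|^p\mid A_k] \geq \delta\|w\|^p$. The Case 2 hypothesis then forces $\delta\|w\|^p$ to dominate a geometric tail in $\sum_{i=k}^{n+1}\lambda^{i-1}\Ex\|B_i\|^p$, so the random-$B$ analog of Lemma \ref{lem:2s} applied to $Y'$ (with the shifted sequence satisfying the same \eqref{ass_small1}) verifies the hypothesis of Lemma \ref{lem:4s} for the conditional distribution $(Y,Z)\mid A_k$, yielding $\Ex\|Y+Z\|^p\ind_{A_k} \geq \Ex\|Y\|^p\ind_{A_k} + \tfrac12\Ex\|Z\|^p\ind_{A_k}$. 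Splitting $\tfrac12 \Ex\|Z\|^p\ind_{A_k} \geq \ve_0\|w\|^p + \ve_0\Ex\|Z\|^p\ind_{A_k}$ (valid since $\ve_0 \leq 1/4$), combining with the induction hypothesis applied on $\Omega \setminus A_k$ conditionally on $(X_1,\ldots,X_k,B_1,\ldots,B_k)$, and using $\Ex R_k^p = 1$ to merge the $\ind_{A_k}$ and $\ind_{\Omega\setminus A_k}$ pieces produces a bound which, after applying \eqref{eq:1sp1} to $\Ex\|w+\sum_{i=1}^k R_{i-1}B_i\|^p$ and using the monotonicity $c_i \geq c_{i-k}$, delivers the desired inequality.

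The main obstacle will be Case 2: verifying the hypothesis of Lemma \ref{lem:4s} on the conditional distribution $(Y,Z)\mid A_k$. This requires the factorization $Y = X_1 R_{2,k}Y'$ with $Y'$ independent of $(A_k, Z)$ (which forces putting $B_k$ into $Z$ rather than $Y$, a mild shift compared with Proposition \ref{prop:2}), together with a threshold calculation linking the Case 2 hypothesis and the tail estimate for $Y'$ to the $1/8$ threshold of Lemma \ref{lem:4s}; the remaining bookkeeping is routine but delicate because $Z$ and $A_k$ share the variables $X_1, \ldots, X_k$.
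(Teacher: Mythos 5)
Your proposal follows the paper's own proof essentially step for step: the same two-case induction, the same event $\{X_1^p\leq A,\ R_{2,k}^p\leq 4\lambda^{2k-2}\}$, the same split $Z=w+\sum_{i=1}^k R_{i-1}B_i$, $Y=R_kY'$ with $Y'$ independent of $Z$, and the same use of \eqref{basic}, the random-$B$ version of Lemma \ref{lem:2s}, and Lemma \ref{lem:4s}; your threshold computation matching $\Phi$ to the $1/8$ cutoff is exactly the one the paper performs. The only detail the paper spells out that you leave implicit is the degenerate event $\{X_1=0\}$ (resp.\ $\{R_k=0\}$) when forming $w'=X_1^{-1}(w+B_1)$ in the conditioning step, which is a routine technicality.
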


\begin{proof}
For $n\leq k$ the assertion follows by \eqref{eq:1sp1} and \eqref{eq:1sp2}. For $n\geq k$ we proceed by induction. 
To simplify the notation let for $k=1,2,\ldots$ and $w\in F$,
\[
S_{k,n}(w):=w+\sum_{i=k}^nR_{k,i-1}B_i\quad \mbox{ and }\quad S_n(w):=S_{1,n}(w)=w+\sum_{i=1}^nR_{i-1}B_i.
\]
Observe that the random variable $S_{k,n}(w)$ is independent of $(X_i,B_i)_{i\leq k-1}$.

As in the proof of Proposition \ref{prop:2} we consider two cases. First assume that
\begin{equation}
\label{case1}
\ve_0\| w\| ^p\leq \Phi \sum _{i=k}^{n+1}\lambda^{i-1}\Ex \| B_i\| ^p.
\end{equation}
We have
\[
\Ex \| S_{n+1}(w)\| ^p =\Ex  X_1^p\| S_{2,n+1}(w')\| ^p\ind_{\{X_1>0\}}+
\Ex \| w+B_1\| ^p\ind_{\{X_1=0\}},
\]
where $w'=X_1^{-1}(w+B_1)\ind_{\{X_1>0\}}$. Hence by the induction assumption (used conditionally on $(X_1,B_1)$)
we get
\begin{align*}
\Ex \| S_{n+1}(w)\| ^p
&\geq \Ex X_1^p\left (\ve _0\| w'\| ^p + \sum _{i=2}^{n+1}\left(\frac{\ve_1}{k}-c_{i-1}\right)\Ex \| B_i\| ^p\right)\ind_{\{X_1>0\}}
+\Ex\| w+B_1\| ^p\ind_{\{X_1=0\}}
\\
&=\ve_0\Ex\| w+B_1\| ^p\ind_{\{X_1>0\}}+
\sum _{i=2}^{n+1}\left(\frac{\ve_1}{k}-c_{i-1}\right)\Ex X_1^p\| B_i\| ^p\ind_{\{X_1>0\}}
\\
&\phantom{=}+\Ex \| w+B_1\| ^p\ind_{\{X_1=0\}}
\\
&\geq \ve_0\Ex \| w +B_1\| ^p + \sum _{i=2}^{n+1}\left(\frac{\ve _1}{k}-c_{i-1}\right)\Ex \| B_i\| ^p 
\\
&\geq  \ve_1\Ex \| B_1\| ^p + \sum _{i=2}^{n+1}\left(\frac{\ve_1}{k}-c_{i-1}\right)\Ex \| B_i\| ^p 
+ \ve _0\| w\| ^p - \Phi \sum _{i=k}^{n+1}\lambda ^{i-1}\E \| B_i\| ^p
\\
&= \ve_0\| w\| ^p+\ve_1\Ex \| B_1\| ^p + \sum _{i=2}^{n+1}\left(\frac{\ve_1}{k}-c_{i}\right)\Ex \| B_i\| ^p,
\end{align*}
where we used independence of $X_1$ and $B_i$ for $i\geq 2$, normalization $\Ex X_1^p\ind_{\{X_1>0\}}=\Ex X_1^p=1$
and inequalities \eqref{eq:1sp1} and \eqref{case1}.

Now suppose that
\[
\ve_0\| w\| ^p> \Phi \sum _{i=k}^{n+1}\lambda^{i-1}\E \| B_i\| ^p
\]
and let
\[
U_k:=\{ X_1^p\leq A, R^p_{2,k}\leq 4\lambda^{2k-2}\}.
\]
We have
\begin{align*}
\Ex \| S_{n+1}(w)\| ^p\ind_{\Om \setminus U_k}
& =\Ex  R_{k}^p\| S_{k+1,n+1}(w')\| ^p\ind_{\Om \setminus U_k}\ind_{\{R_k>0\}}
\\
&\phantom{=}+\Ex \| w+\sum _{i=1}^{k}R_{i-1}B_i\| ^p\ind_{\Om \setminus U_k}\ind_{\{R_k=0\}},
\end{align*}
where $w'=(R_{k})^{-1}(w+ \sum _{i=1}^{k}R_{i-1}B_i)\ind_{\{R_k>0\}}$.
Hence by the induction assumption
\begin{align*}
\Ex \| S_{n+1}&(w)\| ^p\ind_{\Om \setminus U_k}
\\
\geq& \Ex R_k^{p}\left(\ve_0 \| w'\| ^p + \sum _{i=k+1}^{n+1}\left(\frac{\ve_1}{k}-c_{i-k}\right)\E \| B_i\| ^p\right)
\ind_{\Om \setminus U_k}\ind_{\{R_k>0\}}
\\
&+\Ex \left\| w+\sum _{i=1}^{k}R_{i-1}B_i\right\| ^p\ind_{\Om \setminus U_k}\ind_{\{R_k=0\}}
\\
=&  \ve_0 \Ex \left\| w+ \sum _{i=1}^{k}R_{i-1}B_i \right\| ^p\ind_{\Om \setminus U_k}\ind_{\{R_k>0\}}
\\
&+\Ex R_k^p\sum _{i=k+1}^{n+1}\left(\frac{\ve_1}{k}-c_{i-k}\right)\Ex \| B_i\| ^p\ind_{\Om \setminus U_k}\ind_{\{R_k>0\}}
\\
&+\Ex \left\| w+\sum _{i=1}^{k}R_{i-1}B_i\right\| ^p\ind_{\Om \setminus U_k}\ind_{\{R_k=0\}}
\\
\geq & \ve_0 \Ex \left\| w+ \sum _{i=1}^{k}R_{i-1}B_i \right\| ^p\ind_{\Om \setminus U_k}
+\Ex R_k^p\sum_{i=k+1}^{n+1}\left(\frac{\ve_1}{k}-c_{i-k}\right)\Ex \| B_i\| ^p\ind_{\Om \setminus U_k}
\\
=& \ve _0 \Ex \left\| w+ \sum _{i=1}^{k}R_{i-1}B_i \right\| ^p\ind_{\Om \setminus U_k}
+\sum _{i=k+1}^{n+1}\left(\frac{\ve _1}{k}-c_{i-k}\right)\Ex \|R_k B_i\| ^p\ind_{\Om \setminus U_k}.
\end{align*}

To finish the proof we define $(Z,Y,Y')$ as the random variable 
\[
\left(w+ \sum _{i=1}^{k}X_{k}\dots X_{i-1}B_i, 
\sum _{i=k+1}^{n+1}X_{1}\dots X_{i-1}B_i, \sum _{i=k+1}^{n+1}X_{k+1}\dots X_{i-1}B_i\right)
\] conditioned on $U_k$ and we
proceed as in the proof of Proposition \ref{prop:2}.
\end{proof}

\noindent
{\sc Ewa Damek}\\
Institute of Mathematics\\
University of Wroc{\l}aw\\
Pl. Grunwaldzki 2/4\\
50-384 Wroc{\l}aw, Poland\\
\texttt{edamek@math.uni.wroc.pl}

\medskip
\noindent
{\sc Rafa{\l} Lata{\l}a, Piotr Nayar}\\
Institute of Mathematics\\
University of Warsaw\\
Banacha 2\\
02-097 Warszawa, Poland\\
\texttt{rlatala@mimuw.edu.pl, nayar@mimuw.edu.pl}

\medskip
\noindent
{\sc Tomasz Tkocz}\\
Mathematics Institute\\
University of Warwick\\
Coventry CV4 7AL, UK\\
\texttt{t.tkocz@warwick.ac.uk}

\end{document}